\newcommand{\proofpart}[2]{%
  \par
  \addvspace{\medskipamount}%
  \noindent\emph{Step #1: #2}\par\nobreak
  \addvspace{\smallskipamount}%
  \@afterheading
}
\DeclarePairedDelimiter\abs{\lvert}{\rvert}%
\DeclarePairedDelimiter\norm{\lVert}{\rVert}%
\let\oldabs\abs
\def\abs{\@ifstar{\oldabs}{\oldabs*}}
\let\oldnorm\norm
\def\norm{\@ifstar{\oldnorm}{\oldnorm*}}
\g@addto@macro\bfseries{\boldmath}
\newcommand{\C}{\mathbb{C}}
\newcommand{\G}{\mathcal{G}}
\newcommand{\T}{\partial\mathbb{D}}
\newcommand{\Ka}{\mathcal{K}}
\newcommand{\conj}[1]{\overline{#1}}
\newcommand{\D}{\mathbb{D}}
\newcommand{\B}{\mathcal{B}}
\newcommand{\Po}{\mathcal{P}}
\newcommand{\cD}{\conj{\mathbb{D}}}
\newcommand{\hil}{\mathcal{H}}
\renewcommand\Re{\operatorname{Re}}
\newtheorem{thm}{Theorem}[section]
\newtheorem{lemma}[thm]{Lemma}
\newtheorem{cor}[thm]{Corollary}
\newtheorem{prop}[thm]{Proposition}
\theoremstyle{definition}
\theoremstyle{definition}
\newtheorem{definition}{Definition}[section]
\newtheorem{remark}[thm]{Remark}
\begin{document}
\title{\textbf{Asymptotic polynomial approximation in the Bloch space}}
\author{Adem Limani} 
\address{Departament de Matem\`atiques, Universitat Autònoma de Barcelona.} 
\email{AdemLimani@mat.uab.cat}

\date{\today}

\maketitle

\begin{abstract}
We investigate asymptotic polynomial approximation for a class of weighted Bloch functions in the unit disc. Our main result is a structural theorem on asymptotic polynomial approximation in the unit disc, in the flavor of the classical Plessner Theorem on asymptotic values of meromorphic functions. This provides the appropriate framework to study metric and geometric properties of sets $E$ on the unit circle for which the following simultaneous approximation phenomenon occurs: there exists analytic polynomials which converge (pointwise or in mean) to zero on $E$ and to a non-zero function in the weighted Bloch norm. We shall offer a characterization completely within the realm of real-analysis, establish a connection to removable sets for analytic Sobolev functions in the complex plane, and provide several necessary conditions in terms of entropy, Hausdorff content and condenser capacity. Furthermore, we demonstrate two principal applications of our developments, which go in different directions. First, we shall deduce a rather subtle consequence in the theme of smooth approximation in de Branges-Rovnyak spaces. Secondly, we answer some questions that were raised almost a decade ago in the theory of Universal Taylor series. 

\end{abstract}

\section{Introduction}
\subsection{Asymptotic polynomial approximation in the unit disc}
Let $\D$ denote the unit disc in the complex plane $\C$ and let $X$ be a Banach space of holomorphic functions in $\D$ which contains the polynomials as a dense subset. In this note, we shall investigate Lebesgue measurable subsets $E \subset \T$ for which the following simultaneous approximation phenomenon arises: \emph{for any $f\in X$ and any bounded function $h$ on $E$, there exists analytic polynomials $(Q_n)_n$ such that $Q_n \to f$ in $X$ and $Q_n \to h$ (pointwise or in mean) on $E$.}
Conversely, we are interested in describing sets $E \subset \T$ giving rise to the following rigidity property: whenever $(Q_n)_n$ are analytic polynomials converging to $0$ in $X$ and converging to some function $f$ in $E$ (pointwise or in mean), then $f=0$ identically. Interchanging the roles between "convergence on $E$" and "convergence in $X$" in the previous statement, one encounters the perhaps more familiar concept of the Khinchin-Ostrowski property. This common theme of notions is what we shall refer to as \emph{asymptotic polynomial approximation}, and they typically fit within the general scheme of uncertainty principles in harmonic analysis, see \cite{havinbook}. These problems have previously been considered in different contexts and a principal source where the setting of holomorphic growth spaces was treated, is due to S. Khrushchev in \cite{khrushchev1978problem}. Our main intention is to carry out similar investigations in the setting of the classical Bloch space $\B$ consisting of holomorphic functions $f$ satisfying
\[
\norm{f}_{\B}:= \abs{f(0)} + \sup_{z\in \D} (1-|z|)\abs{f'(z)}.
\]
Since our problem involves convergence of analytic polynomials, we are, in fact, confined within the little Bloch space $\B_0$, which is the separable subspace of $\B$ consisting of functions $f$ satisfying 
\[
\lim_{|z| \to 1-} (1-|z|)f'(z) =0.
\]
\subsection{Notions and general framework}

As indicated in the introduction, the precise modes of convergence on subsets $E \subseteq \T$ in the different notions of asymptotic polynomial approximation will not crucial. However, in order to maintain a broad point of view, we shall find it more convenient to consider weak-star convergence in the space of essentially bounded functions on $E$, instead of uniform convergence on $E$. By passing to appropriate convex combinations of polynomials, one gets convergence in any $p$-th mean and pointwise $dm$-a.e in $E$. The upshot is that one can work with general Lebesgue measurable subsets $E$ of $\T$, as the space of essentially bounded functions on $E$ is well-defined, whereas the uniform closure of polynomials on non (locally) compact subsets $E$ are rather pathological. 

In what follows, all subsets of $\T$ of our considerations will be Lebesgue measurable, hence we shall for the sake of abbreviation just refer to them as sets. As sets of zero Lebesgue measure will be irrelevant in our considerations, we say that a set $K$ is \emph{almost contained} in a set $E$ if $K\setminus E$ has Lebesgue measure zero. Let $1\leq p\leq \infty$ and $E\subseteq \T$ be a set, and denote by $L^p(E)$ the usual Lebesgue space on $E$ wrt the Lebesgue measure $dm$ on $\T$. It will be convenient to regard $L^p(E)$ as the closed subspace of $L^p(\T)$, whose elements vanish $dm$-a.e off $E$. Let $\text{Hol}(\D)$ be the space of holomorphic of functions in $\D$ equipped with the topology of uniform convergence on compact subsets of $\D$ and let $X \hookrightarrow \text{Hol}(\D)$ denote a Banach space which contains the polynomials as a dense subset. For a set $E \subset \T$, we denote by $\Po_E(X)$ be the closure of the diagonal 
\[
\mathcal{Q}:= \left\{(Q,Q): Q \, \, \text{analytic polynomials} \right\} 
\]
in the space $X \oplus L^\infty(E)$, equipped with the topology inherited from the norm in $X$ and the weak-star topology in $L^\infty(E)$. Here we recall that that a sequence of polynomials $(Q_n)_n$ converges weak-star to an essentially bounded function $f\in L^\infty(E)$ if 
\[
\lim_n \int_E Q_n h dm = \int_E fh dm, \qquad h\in L^1(E).
\]
Our principal object of study is the outcome of the space $\Po_E(X)$. A set $E$ satisfies the \emph{SA-property} (simultaneous approximation) wrt $X$ if $\Po_E(X) = X \oplus L^\infty(E)$. We shall sometimes also refer to such sets as \emph{SA-sets} for $X$. A subset $E \subseteq \T$ is said to be a \emph{set of rigidity} for $X$, if the projection $\Pi: X \oplus L^\infty(E) \to X$ restricted to the subspace $\Po_E(X)$ is injective. In other words, $E$ is a set of rigidity for $X$ if whenever $(Q_n)_n$ are analytic polynomials such that $Q_n \to 0$ in $X$ and $Q_n \to f$ weak-star in $L^\infty(E)$, then $f=0$. The notion of rigidity originates from the conception that $\Po_E(X)$ should be regarded a genuine space of analytic functions, since each element $(f, f^*) \in \Po_E(X)$ is uniquely determined by its holomorphic function $f$ on $\D$. Whenever $E$ is a set of rigidity for $X$, we shall say that $\Po_E(X)$ is \emph{irreducible}. The concepts of SA-sets and sets of rigidity should be thought of as opposite phenomenons, as the former property requires $\Po_E(X)$ to be "maximally large", while the latter requires $\Po_E(X)$ to be as small as possible. We shall also need a slightly weaker notion of the classical Khinchin-Ostrowksi property, which originates from the Khinchin-Ostrowksi theorem, see \cite{havinbook}. A set $E\subseteq \T$ is said to satisfy the \emph{weak KO-property} wrt $X$ if whenever $(Q_n)_n$ are analytic polynomials with the property that $Q_n \to f$ in $X$ for some bounded function $f$ and $Q_n \to 0$ weak-star in $L^\infty(E)$, then $f=0$. In other words, the weak KO-property refers to that every bounded element in $(f, f^*) \in \Po_E(X)$ is uniquely determined by its "boundary function" $f^*$. The principle aim of our work revolves around understanding when these different phenomenons occur and the relationship between them.

\subsection{Background}
Given a number $1\leq p< \infty$, we recall the definition of the classical Bergman spaces $L^p_a(\D)$, which consist of holomorphic functions in $\D$ which are $L^p$-integrable wrt to the Lebesgue area measure $dA$ on $\D$, that is
\[
\norm{f}^p_{L^p(dA)} := \int_{\D} \abs{f(z)}^p dA(z) < \infty.
\]
In the magnificent PhD-thesis of Sergei Khrushchev, announced in 1978, the problem of simultaneous approximation was thoroughly investigated in the context holomorphic functions $\D$ that satisfy certain radial growth, which include the classical Bergman space, see \cite{khrushchev1978problem}. In his work, a deep connection between simultaneous approximation and to a certain (one-sided) problem on removable singularities for Cauchy transforms, was establish, which ultimately allowed him to obtain a complete geometric descriptions on sets satisfying the SA-property and the Khinchin-Ostrowski property in that setting. In order to efficiently illustrate his result, we paraphrase an accessible summary of his work, and refer the reader to \cite{khrushchev1978problem} for further details.

\begin{thm}[S. Khrushchev, 1978]\thlabel{THM:HRUSH}
Let $1\leq p< \infty$ and $E \subseteq \T$ be a Lebesgue measurable set. Then the following statements are all equivalent: 
\begin{enumerate}
    \item[(i)] $E$ does not satisfy the SA-property wrt $L^p_a(\D)$.
    \item[(ii)] $E$ satisfies the Khinchin-Ostrowski property wrt $L^p_a(\D)$.
    \item[(iii)]
    There exists a compact set $K$ of positive Lebesgue measure, which is almost contained in $E$, such that $K$ has finite Beurling-Carleson entropy:
\[
\sum_k \abs{I_k} \log \frac{1}{\abs{I_k}} < \infty
\]
where $\{I_n\}_n$ denotes the connected components of $\T \setminus K$.
\item[(iv)] There exists a non-trivial integrable function $\phi$ on $\T$ that vanishes off $E$, such that its Cauchy transform
\[
C(\phi)(z) := \int_E \frac{\zeta \phi(\zeta) dm(\zeta)}{\zeta -z}, \qquad z\in \D
\]
extends to a non-constant function in $C^\infty(\T)$.
\end{enumerate}
\end{thm}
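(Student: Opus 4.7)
The plan is to run the cycle (iii) $\Rightarrow$ (iv) $\Rightarrow$ (i) $\Rightarrow$ (iii), with (i) $\Leftrightarrow$ (ii) obtained from the same annihilator that witnesses the failure of the SA-property. The backbone is a Hahn-Banach duality, while the principal real-variable content sits in the passage from smoothness of a Cauchy transform to the Beurling-Carleson entropy condition.

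\textbf{Duality, (i) $\Leftrightarrow$ (iv).} By Hahn-Banach, the diagonal $\mathcal{Q}$ fails to be dense in $L^p_a(\D) \oplus L^\infty(E)$ iff there exists a non-zero continuous functional of the form
\[
\Lambda(f,h) = \langle f, g\rangle + \int_E h\phi\,dm,
\]
where $g$ lies in a Bergman space dual to $L^p_a(\D)$ and $\phi \in L^1(E)$ is the weak-star predual representation of the $L^\infty(E)$-part, subject to the annihilation $\langle Q, g\rangle + \int_E Q\phi\,dm = 0$ for every analytic polynomial $Q$. Expanding $Q(z) = z^n$ and matching Taylor/Fourier coefficients identifies $C(\phi)|_\D$ with a specific smooth Bergman-type function built from $g$; the $C^\infty(\overline{\D})$-regularity on the Bergman-dual side, combined with the jump-formula identity across $\T$, forces $C(\phi)$ to extend across $\T$ to a function in $C^\infty(\T)$. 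Non-triviality of $\Lambda$ implies $\phi \not\equiv 0$ and $C(\phi)$ non-constant. The converse is direct: a $\phi$ as in (iv) produces such a $g$ via the same coefficient identification and thereby an annihilator.

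\textbf{Geometry, (iii) $\Leftrightarrow$ (iv).} For (iii) $\Rightarrow$ (iv), given a compact BC set $K \subseteq E$, the finiteness of $\sum_k |I_k|\log(1/|I_k|)$ enables the classical Beurling-Carleson construction of an outer function $F_K \in H^\infty \cap C^\infty(\overline{\D})$ vanishing to infinite order on $K$; a non-trivial $\phi \in L^1(K)$ with $C(\phi) \in C^\infty(\T)$ is then manufactured from $F_K$ (for instance as a suitable smooth multiple or distributional derivative). For the deeper direction (iv) $\Rightarrow$ (iii), the Plemelj-Sokhotski jump formula $\phi = C(\phi)^+ - C(\phi)^-$ confines $\text{supp}(\phi)$ inside $E$, and one extracts a compact $K \subseteq \text{supp}(\phi)$ of positive measure whose complementary arcs $\{I_k\}$ satisfy the BC entropy condition; this extraction combines a Whitney-type decomposition of the essential support with the rapid Fourier-coefficient decay of $C(\phi)$, the smoothness of $C(\phi)$ translating, through integration-by-parts across complementary arcs, into a summability statement equivalent to finite entropy.

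\textbf{KO, (i) $\Leftrightarrow$ (ii), and the main obstacle.} If $(g,\phi)$ annihilates $\mathcal{Q}$, then any polynomials $Q_n$ bounded in $L^p_a(\D)$ with $Q_n \to h$ weak-star on $E$ satisfy $\langle Q_n, g\rangle \to -\int_E h\phi\,dm$; the richness of such pairs (supplied by the geometric picture applied to arbitrary subsets of $E$) forces any weak-star cluster value to be determined entirely by $h|_E$, which is the content of KO and yields (i) $\Rightarrow$ (ii). The converse is a contradiction argument: if SA held, density of $\mathcal{Q}$ would produce sequences $Q_n \to 0$ in $L^p_a(\D)$ with $Q_n \to h \neq 0$ weak-star on $E$, directly violating KO. The principal obstacle in the entire argument is the reverse geometric direction (iv) $\Rightarrow$ (iii): extracting a BC compact set from the bare information that a single $\phi \in L^1(E)$ has a smooth Cauchy transform. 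This is the real-variable heart of Khrushchev's theorem and the place where the entropy condition emerges intrinsically, rather than as a hypothesis.
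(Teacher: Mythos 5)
First, a point of order: the paper does not prove this theorem. It is quoted as background from Khrushchev's 1978 work, with the author explicitly stating that it is a paraphrased summary and deferring to the cited reference for details; the only in-paper material touching it is the duality statement recorded as Khrushchev's Lemma (\thref{SAduality}) and its rigidity analogue (\thref{THM:Irred}). So your proposal can only be judged on its own merits, not against an in-paper argument.

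On those merits there is a genuine gap in your duality step, and it propagates through the whole cycle. Hahn--Banach applied to the diagonal $\mathcal{Q}$ in $L^p_a(\D)\oplus L^\infty(E)$ produces a pair $(g,\phi)$ with $\phi\in L^1(E)$ and $g$ in the \emph{Cauchy dual} of $L^p_a(\D)$, and coefficient matching gives $C(\phi)=-g$. But the Cauchy dual of a Bergman space is another Bergman-type space (for $p=1$ it is essentially the Bloch space), whose elements are in general nowhere near smooth up to the boundary; your assertion that ``the $C^\infty(\overline{\D})$-regularity on the Bergman-dual side \dots forces $C(\phi)$ to extend to $C^\infty(\T)$'' is false, and duality alone yields only the weak statement that $C(\phi)$ lies in a Bergman-type space. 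The actual hard content of Khrushchev's theorem is the passage from this weak conclusion to (iii): extracting a Beurling--Carleson compact from the support of a $\phi$ whose Cauchy transform merely satisfies a growth or integrability condition. Only after that does one recover (iv), by manufacturing a \emph{new} $\phi$ supported on the extracted compact $K$ from the smooth outer function vanishing to infinite order on $K$ (exactly the mechanism the paper uses in Step 1 of the proof of \thref{THM:suffwG}, where $C(1_K\conj{\zeta}\conj{f})=-C(1_{\T\setminus K}\conj{\zeta}\conj{f})$ and the right-hand side is the Cauchy transform of a regular function). Your sketch of (iv) $\Rightarrow$ (iii) runs the extraction under the smoothness hypothesis on $C(\phi)$, which is precisely what (i) does \emph{not} supply, so the implication (i) $\Rightarrow$ (iii) is never established and the cycle does not close. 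The (i) $\Leftrightarrow$ (ii) discussion has a parallel weakness: in the framework of this paper that equivalence rests on the density of $\{C(h):h\in L^1(A)\}$ in $C(L^1)$ for a rigidity set $A$ (\thref{THM:Irred} together with \thref{THM:KOprop}), proved via Aleksandrov's classification of $M_z'$-invariant subspaces and the F.~and M.~Riesz theorem, rather than the informal ``richness of annihilating pairs'' you invoke.
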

\noindent
Here $C^\infty(\T)$ denotes the set of $C^\infty$-smooth functions on $\T$. Sets of finite Beurling-Carleson entropy which have zero Lebesgue measure are well-known to be boundary zero sets of holomorphic functions in $\D$ which extend smoothly up to $\T$, for instance, see \cite{carlesonuniqueness} and \cite{taylor1970ideals}. Khrushchev's Theorem does not only give a characterisation of sets satisfying the Khinchin-Ostrowski principle in terms of the Beurling-Carleson entropy, but also establishes a deep connection to the problem of supports of functions with smooth Cauchy transform, which can typically be nowhere dense in $\T$. In our language, S. Khrushchev proved that sets of finite Beurling-Carleson entropy are sets of rigidity for the Bergman spaces. In fact, his results were actually phrased within the general setting of weighted growth spaces $\G_W$ of holomorphic functions $g$ in $\D$ satisfying
\begin{equation}\label{DEF:GW}
\norm{g}_{\G_W} := \sup_{z\in \D} W(1-|z|) \abs{g(z)} < \infty.
\end{equation}
Here one should think of $W$ as a non-decreasing weight function on the unit-interval $[0,1)$ with $W(0)=0$, which tends to zero slower than an exponential. Khrushchev's general result is then analogues to \thref{THM:HRUSH}, where $(iii)$ is now expressed in terms of an adapted $W$-entropy condition (see \eqref{wset} below) and $(iv)$ involves a statement on where the Cauchy transform belongs to an appropriate Cauchy-dual space of $\G_W$. We refer the reader to \cite{khrushchev1978problem} for further details.

The problem of simultaneous approximation in the setting of growth spaces $\G_W$ is also related to the Lusin-Privalov uniqueness problem of radial limits, which essentially asserts that there exists in any growth space $\G_W$, a non-trivial holomorphic function $f$ with zero radial limit at $dm$-a.e on $\T$. In fact, S. Khrushchev proved that this result can be utilized to give a simple proof of the existence of non-trivial SA-sets for $\G_W$, while B. Korenblum and C. Beneteau gave a quantitative proof using a sharp Jensen-type inequality, to show that such behavior can only occur on sets which do not contain any compact subsets of positive measure having finite $W$-entropy, see \cite{beneteau2001jensen}.

On a related matter, we shall now mention the recent work of B. Malman on Thomson decompositions in the context of $\Po^t(d\mu)$-spaces, see \cite{malman2023thomson}. Given $1\leq t < \infty$ and a set $E \subset \T$ of positive Lebesgue measure, we are interested in the outcome by taking the closure of analytic polynomials in the Lebesgue spaces $L^t(d\mu)$, denoted by $\Po^t(d\mu)$, among measures $d\mu$ of the form
\[
d\mu(z) = 1_{\D}(z)W(1-|z|)dA(z) + 1_E(z) dm(z).
\]
A deep result by J. Thomson in \cite{thomson1991approximation}, implies that for any set $E$, there exists a partition of $E$ into subsets $A, S$, which is unique up to sets of Lebesgue measure zero, such that the following decomposition holds
\begin{equation}\label{EQ:Thomdec}
\Po^t(d\mu) = \Po^t(1_{\D}WdA+ 1_A dm) \oplus L^t (1_S dm),
\end{equation}
interpreted as isometric isomorphism of Banach spaces, where the space $\Po^t(1_{\D} WdA+ 1_A dm)$ is \emph{irreducible}, in the sense that it contains no non-trivial characteristic function. Thomson's decomposition in \eqref{EQ:Thomdec} allows one to single out the "bad" $L^t(1_Sdm)$-summand from a $\Po^t(d\mu)$-spaces, in such a way that one is always left with an irreducible space. In the profound work of A. Aleman, S. Richter and C. Sundberg in \cite{aleman2009nontangential}, it was proved that irreducible $\Po^t(d\mu)$-spaces are genuine spaces of holomorphic functions that share many familiar traits with the classical Hardy spaces. More generally, $\Po^2 (d\mu)$-spaces play a fundamental role in the theory of subnormal operators, as they serve as functional models therein, see \cite{conway1991theory}. Now the principal contribution in the work of B. Malman was a rather precise geometric description of the subsets $A, S$, relative to the set $E$, which appear in the Thomson decomposition. In a similar vain as the work of Khrushchev, his description involves the notion of $W$-entropy. Roughly speaking, the subset $A$ of $E$ is the "maximal" set formed by subsets of $E$ having finite $W$-entropy, while any subset of $S$ has infinite $W$-entropy. We refer the reader to \cite{malman2023thomson} for further details.

From a function theory perspective, the Bloch space is viewed as limiting case of the classical Bergman spaces, hence it is natural to consider the corresponding problems in this setting. In fact, it was mentioned by N. Makarov that it would be desirable to have a corresponding description of sets satisfying the SA-property wrt to the Bloch space, and furthermore, it was indicated that this problem could also be related to the problem of describing supports of smooth measures, see \cite{makarov1989class}. To the authors best knowledge, even the question on existence of sets having positive Lebesgue measure and satisfying the SA-property wrt $\B_0$, has previously not been confirmed. An instant obstacle is the observation that the connection to the Lusin-Privalov uniqueness problems of radial limits no longer holds. Indeed, the Lehto-Virtanen maximal principle for normal functions (for instance, see \cite{pommerenke2013boundary}) and Privalov's Theorem, ensures that any Bloch function which has boundary value $0$ along some curve(s) on a set of positive Lebesgue measure, must vanishes identically. It also turns out that existence SA-sets for the Bloch space would also have interesting applications to certain problems on universal Taylor series, see \cite{beise2016generic}. The author has also been informed in private communication with J. Bruna, that sets satisfying the Khinchin-Ostrowski property in the Bloch space setting had previously received some attention in the early 90's, but no official progress was ever declared.

\section{Main results}
We summarize our main results in subsections featuring their principal contents. In a similar vain as the work of S. Khrushchev, several of our findings will favorably be phrased within a broader context of weighted Bloch spaces.

\subsection{A structural theorem and existence of SA-sets}
We declare a \emph{majorant} $w$ to be a continuous non-decreasing function on the unit-interval $[0,1]$ with the property that there exists a number $0<\alpha <1$, such that $w(t)/t^{\alpha}$ increases up to $\infty$ as $t\to 0+$. Given a majorant $w$, we define the space $\B_0(w)$ consisting of analytic functions $f$ in $\D$ satisfying 
\[
\lim_{\abs{z}\to 1-} \frac{1-|z|}{w(1-|z|)}f'(z) = 0.
\]
In other words, the weighted Bloch-type space $\B_0(w)$ is actually the closure of the analytic polynomials taken in the norm 
\[
\norm{f}_{\B(w)} := \abs{f(0)} + \sup_{z\in \D} \frac{1-|z|}{w(1-|z|)}\abs{f'(z)}.
\]
In the case when the majorant $w$ is a constant, one retrieves the classical little Bloch space $\B_0$, which plays a crucial role in geometric function theory, for instance, see \cite{garnett2005harmonic}. If $w(t)=t^\alpha$ then a classical result of G.H. Hardy and J. Littlewood asserts that $\B(w)$ is the space of holomorphic functions in $\D$ which are H\"older continuous on $\cD$, see \cite{duren1970theory}. It turns out that non-trivial SA-sets for a weighted Bloch space $\B_0(w)$ can only arise when the associated majorant $w$ tends to zero slowly enough. Our first result takes the following form.

\begin{thm}\thlabel{THM:SAsets} Let $w$ be a majorant and $\B_0(w)$ denote the corresponding Bloch-type space. Then there exists a set of positive Lebesgue measure which satisfies the SA-property wrt $\B_0(w)$ if and only if  
\begin{equation}\label{DiniDiv}
\int_0^1 \frac{w^2(t)}{t}dt = \infty.
\end{equation}
If condition \eqref{DiniDiv} does not hold, then any set of positive Lebesgue measure is a set of rigidity for $\B_0(w)$.
\end{thm}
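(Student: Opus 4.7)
The statement splits into two implications, connected by the observation that on positive-measure sets the SA-property and rigidity are mutually exclusive (since SA forces $\{0\} \oplus L^\infty(E) \subset \Po_E(X)$, which rigidity forbids unless $L^\infty(E)=0$). \textbf{(A)} If $\int_0^1 w^2(t)/t\, dt < \infty$, I would show every set of positive measure is a set of rigidity for $\B_0(w)$; this both gives the second assertion and rules out SA-sets of positive measure. \textbf{(B)} If $\int_0^1 w^2(t)/t\, dt = \infty$, I would exhibit an SA-set of positive measure.

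For \textbf{(A)}, the analytic heart is the continuous embedding $\B(w) \hookrightarrow H^2$, valid precisely under the Dini condition. From the Littlewood-Paley identity
\[
\norm{f}_{H^2}^2 \asymp |f(0)|^2 + \int_\D |f'(z)|^2 (1-|z|)\, dA(z)
\]
together with the pointwise bound $|f'(z)| \leq \norm{f}_{\B(w)}\, w(1-|z|)/(1-|z|)$ and the polar computation $\int_\D w(1-|z|)^2/(1-|z|)\, dA(z) \asymp \int_0^1 w^2(t)/t\, dt$, the inclusion follows. From there, rigidity is immediate: if $Q_n \to 0$ in $\B_0(w)$ then $Q_n|_\T \to 0$ in $L^2(\T)$, hence in $L^1(E)$; combined with the bound $\sup_n \norm{Q_n}_{L^\infty(E)}<\infty$ forced by weak-star convergence (Banach--Steinhaus), a truncation argument against arbitrary $\phi \in L^1(E)$ collapses any weak-star limit $h$ of $Q_n|_E$ to zero.

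For \textbf{(B)}, the route I would follow is Hahn--Banach duality, in the spirit of the Khrushchev program behind \thref{THM:HRUSH}. The SA-property for $E$ is equivalent to the triviality of every annihilating pair $(L,\phi)\in \B_0(w)^* \oplus L^1(E)$ satisfying $L(Q)+\int_E Q\phi\, dm = 0$ on polynomials. Realizing $\B_0(w)^*$ through a weighted Cauchy-type pairing with an appropriate weighted Bergman class, the annihilation condition transfers to a smoothness requirement on the Cauchy transform $C(\phi)$—membership in a predual class of $\B_0(w)$ that plays the role of the Cauchy-smooth functions in \thref{THM:HRUSH}(iv). The target is then to construct $E$ of positive measure on which no non-trivial $\phi \in L^1(E)$ meets that smoothness requirement.

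The main obstacle is this construction. The plan is to build $E$ as a generalized Cantor-type set whose gap sequence is calibrated to $w$, arranged so that every positive-measure subset of $E$ diverges in the $w$-adapted entropy that plays the role of Beurling--Carleson entropy in \thref{THM:HRUSH}(iii). The hypothesis $\int_0^1 w^2(t)/t\, dt = \infty$ is the analytic slack allowing this scheme to both retain positive Lebesgue measure and rule out any Cauchy transform $C(\phi)$ of the required smoothness, via a Korenblum--Beneteau style Jensen inequality as in \cite{beneteau2001jensen}. The delicate points are the exact identification of the Cauchy-dual smoothness class for $\B_0(w)$, the quantitative matching between the Dini divergence and the Cantor parameters (so that the measure of $E$ stays positive), and the uniqueness step that extinguishes every candidate $\phi$.
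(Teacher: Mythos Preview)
Your treatment of \textbf{(A)} is correct and matches the paper's argument essentially line for line: both establish $\B_0(w)\hookrightarrow H^2$ under the Dini condition (the paper routes through a Carleson-square estimate giving containment in $VMOA$, you go directly through Littlewood--Paley, which amounts to the same computation), and then rigidity follows by comparing the $H^2$-limit $0$ with the weak-star limit on $E$.

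For \textbf{(B)}, your dual reformulation is also the paper's starting point (it is Khrushchev's lemma, \thref{SAduality}): $E$ is SA iff no nonzero $\phi\in L^1(E)$ has $C(\phi)\in W^1_a(w)$. But from there the paper takes a completely different route, and your proposed construction has a real gap. You want a Cantor-type set calibrated so that every positive-measure subset has infinite $w$-adapted entropy, and then invoke a Korenblum--Beneteau style Jensen inequality to kill any candidate $\phi$. The problem is that the Jensen machinery in \cite{beneteau2001jensen} is built for growth spaces $\G_W$, where one controls $\log|f|$ pointwise; here the Cauchy dual is the weighted Sobolev space $W^1_a(w)$, and there is no analogous Jensen inequality that converts ``$E$ contains no subset of finite entropy'' into ``$C(\phi)\notin W^1_a(w)$ for all $\phi\in L^1(E)$''. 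Indeed, the paper later shows (\thref{THM:SHARPENT}) that entropy conditions alone do not characterize SA-sets for $\B_0$, and that result itself \emph{consumes} the existence theorem rather than providing it. So your plan is circular at best, and the ``delicate points'' you list are not technicalities but the entire missing argument.

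The paper's construction is primal rather than dual. The key external input is a theorem of Aleksandrov (\thref{THM:wHypBloch}): under the divergence hypothesis $\int_0^1 w^2(t)/t\,dt=\infty$, there exist inner functions $\theta$ whose hyperbolic derivative $(1-|z|)|\theta'(z)|/(1-|\theta(z)|^2)$ is $o(w(1-|z|))$. Such $\theta$ induce composition operators $C_\theta:H^\infty\to\B_0(w)$ of arbitrarily small norm. One then takes bounded outer functions $F_{\varepsilon,\delta}$ with $F_{\varepsilon,\delta}(0)=1$ and $|F_{\varepsilon,\delta}|=\varepsilon$ off a short arc $I_\delta$, sets $G=1-F$, and observes that $G\circ\theta$ has small $\B_0(w)$-norm while being uniformly close to $1$ on $\T\setminus\theta^{-1}(I_\delta)$, a set of measure $1-\delta$ by L\"owner's lemma. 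Intersecting such sets produces $E$ of positive measure on which $(0,1)\in\Po_E(\B_0(w))$. This route is necessary precisely because, as the paper emphasizes, the Lusin--Privalov/radial-limit mechanism underlying both Khrushchev's original argument and the Korenblum--Beneteau inequality fails for Bloch functions (Lehto--Virtanen plus Privalov forces any Bloch function with radial limit $0$ on a set of positive measure to vanish identically).
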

\noindent

Recall that the existence of SA-sets in $\B_0(w)$ is no longer related the Lusin-Privalov uniqueness problem, hence our proof of \thref{THM:SAsets} provides an alternative route via composition operators of inner functions with asymptotically small hyperbolic derivatives. Furthermore, we shall illustrate in section \ref{SEC:7} and in section \ref{SEC:8}, that the existence of SA-sets for $\B_0(w)$ has some interesting consequences to the theory of de Branges-Rovnyak spaces and to the theory of Universal Taylor series, respectively. Moving forward, we now phrase our next result, which provides a general structural theorem for asymptotic polynomial approximation in the weighted Bloch spaces, which is admittedly similar in spirit to the classical Plessner Theorem on the asymptotic behaviour of meromorphic functions. 
\begin{thm}\thlabel{THM:Bstruc} Let $w$ be a majorant and $E \subset \T$ be a set of positive Lebesgue measure. Then there exists a unique (modulo sets of Lebesgue measure zero) partition into subsets $A,S$ of $E$, depending only on $E$ and $w$, for which the following statements hold:
\begin{enumerate}
    \item[(i)] $A$ is a set of rigidity for $\B_0(w)$. 
    \item[(ii)] $S$ satisfies the SA-property wrt $\B_0(w)$.
\end{enumerate}
Moreover, the following isomorphic identification holds:
\begin{equation}\label{EQ:CHAUM}
    \Po_{E} \B_0(w) \cong \Po_A \B_0(w) \oplus L^\infty(S).
\end{equation}
Modulo sets of Lebesgue measure zero, the measurable sets $A,S$ appearing in the partition of $E$ are conformally invariant.
\end{thm}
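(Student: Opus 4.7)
The plan is to construct the partition $A \sqcup S$ canonically via a duality argument, verify properties (i)--(ii), and then derive the isomorphic identification and conformal invariance as direct consequences. A standard Hahn--Banach argument reformulates the two notions as follows: letting
\[
\mathcal{A}(F) := \bigl\{g \in L^1(F) : Q \mapsto \textstyle\int_{\T} Q g\,dm \text{ extends continuously to } \B_0(w)\bigr\},
\]
the set $F$ satisfies the SA-property with respect to $\B_0(w)$ if and only if $\mathcal{A}(F) = \{0\}$, while $F$ is a set of rigidity if and only if $\mathcal{A}(F)$ is norm-dense in $L^1(F)$ (so its $L^\infty$-annihilator is trivial). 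Declare $A \subseteq E$ to be the essential union of the supports of all elements of $\mathcal{A}(E)$, and set $S := E \setminus A$. Claim (ii) is then immediate: any $g \in \mathcal{A}(S)$ satisfies $\supp{g} \subseteq S \cap A = \emptyset$, forcing $\mathcal{A}(S) = \{0\}$.

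The rigidity of $A$, i.e.\ claim (i), is the principal technical obstacle. It reduces to proving the localization property that for every $g \in \mathcal{A}(E)$ and every measurable subset $F \subseteq E$, one also has $g \cdot \mathbf{1}_F \in \mathcal{A}(E)$. I would attack this by exploiting the concrete description of the dual of $\B_0(w)$ via a Cauchy pairing with a weighted Bergman-type space: continuity of $Q \mapsto \int_{\T} Q g\,dm$ on $\B_0(w)$ translates into membership of the Cauchy transform of $g$ in that Bergman-type space, a condition preserved under multiplication of $g$ by bounded measurable functions thanks to the reproducing kernel structure. Granted localization, density of $\mathcal{A}(A)$ in $L^1(A)$ follows from a standard exhaustion argument against indicators of measurable subsets of $A$, yielding (i).

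With the partition in hand, the isomorphic identification \eqref{EQ:CHAUM} is obtained by writing any $(f, h) \in \Po_{E} \B_0(w)$ as $h = h|_A + h|_S$ and using the SA-property of $S$ to prescribe $h|_S$ independently from the constrained pair $(f, h|_A) \in \Po_A \B_0(w)$; the localization property permits one to transfer between the $A$- and $S$-components without interference. Uniqueness of the partition is then automatic: subsets of SA-sets inherit the SA-property by extending boundary data by zero, and the localization property forces any SA-subset of a rigid set to be null, so a comparison of two such partitions $E = A \sqcup S = A' \sqcup S'$ shows that both $S \cap A'$ and $S' \cap A$ must be null. Finally, conformal invariance reflects the covariant behavior of $\mathcal{A}$ under M\"obius composition operators: these map the class of polynomials to a dense subspace and send $\B_0(w)$ into a commensurable weighted Bloch space, so the essential support defining $A$ is preserved modulo null sets under disk automorphisms.
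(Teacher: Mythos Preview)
Your duality set-up is sound: the identifications ``$F$ is SA iff $\mathcal{A}(F)=\{0\}$'' and ``$F$ is rigid iff $\mathcal{A}(F)$ is dense in $L^1(F)$'' are exactly the paper's Khrushchev Lemma and \thref{THM:Irred}, and your candidate $A$ (the essential carrier of $\mathcal{A}(E)$) agrees with the paper's maximal rigidity set. The decomposition \eqref{EQ:CHAUM} and the uniqueness argument also go through once (i) and (ii) are in place.

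The genuine gap is the localization step. The claim that $g\in\mathcal{A}(E)$ implies $g\cdot\mathbf{1}_F\in\mathcal{A}(E)$ for arbitrary measurable $F\subseteq E$ is \emph{false}, and no reproducing-kernel argument will rescue it. Concretely: take $E$ to be an arc and $g=\mathbf{1}_E$, so that $C(g)\in W^1_a(w)$; now let $F\subseteq E$ be any SA-set for $\B_0(w)$ of positive measure (such sets exist inside any arc by \thref{THM:SAsets} and conformal invariance). Then $g\cdot\mathbf{1}_F=\mathbf{1}_F\in L^1(F)$ is nonzero, yet by Khrushchev's Lemma its Cauchy transform cannot lie in $W^1_a(w)$. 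In other words, $\mathcal{A}(E)$ is \emph{not} an $L^\infty(E)$-module, and your density-by-exhaustion argument for (i) collapses.

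What the paper exploits instead is the far weaker invariance $\mathcal{A}(E)\ni h\mapsto \bar\zeta h$, which is just the transpose of $M_z$ on $\B_0(w)$. Combined with the F.\ and M.\ Riesz theorem, this yields a dichotomy (\thref{THM:Irred}): for any $A$ with $|A|<1$, the set of $h\in L^1(A)$ with $h\neq 0$ a.e.\ on $A$ and $C(h)\in W^1_a(w)$ is either empty or dense in $L^1(A)$. The paper then builds $A$ as an increasing union of rigidity sets of near-maximal measure, checks rigidity passes to the union directly from the definition, and obtains the SA-property of $S=E\setminus A$ by contradiction against this maximality. Your essential-union construction could be made to work along the same lines---produce a single $h$ with carrier exactly $A$ by a suitable $\ell^1$-combination and invoke the dichotomy---but you cannot bypass the $M_z'$-invariance\,/\,Riesz argument.
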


Here it is very important to remark that the sets appearing in the statement of \thref{THM:Bstruc} are relative to the given initial set $E$. Roughly speaking, this means that a set $E$ satisfies the SA-property wrt $\B_0(w)$ if and only if $E$ completely lacks "rigidity content" wrt $\B_0(w)$. The isomorphic decomposition in \eqref{EQ:CHAUM} may be regarded as a version of Chaumat's lemma (see Lemma VII.1.7 in \cite{conway1991theory}) and should be contrasted with a version of Thomson's decomposition in \eqref{EQ:Thomdec}. Note that \thref{THM:Bstruc} provides no geometric conditions on the corresponding subsets $A, S$ of $E$, nor does it exclude the possibility that either $A$ or $S$ can be of zero Lebesgue measure, hence does not alone assure the existence of non-trivial SA-sets for $\B_0(w)$. The last statement asserts that for any automorphism $\varphi$ on $\D$, $\varphi(A)$ is set of rigidity for $\B_0(w)$ and $\varphi(S)$ is an SA-set for $\B_0(w)$.

\subsection{A function theoretical description}
We now turn our attention to the description of the various notions of asymptotic polynomial approximation in $\B_0(w)$. To this end, we define the special atomic space $\mathscr{B}_w(\T)$, which consists of distributions $g$ on $\T$, which can be expressed as 
\[
g(\zeta) = \sum_n c_n b_n(\zeta), \qquad \zeta \in \T
\]
where $\{c_n\}_n$ is an absolutely summable complex-valued sequence and $\{b_n\}_n$ are special atoms supported on corresponding arcs $\{I_n\}_n$ on $\T$ and defined by $b_0 \equiv 1$ and
\[
b_n (\zeta) = \frac{1}{|I_n|w(|I_n|)} \left( 1_{I_n^+}(\zeta)-1_{I_n^-}(\zeta) \right), \qquad \zeta \in \T, \qquad n\neq 0,
\]
where $I_n^-, I_n^+$ denote the left and right halves of $I_n$, respectively. The special atomic space $\mathscr{B}_w(\T)$, equipped with the norm 
\[
\norm{g}_{\mathscr{B}_w} = \inf \left\{ \sum_n \abs{c_n}: g= \sum_n c_n b_n \right\}
\]
where the infimum is taken over all such representations of $g$, becomes a Banach space. These spaces were introduced by S. Bloom and S. de Souza in \cite{bloom1989atomic}, and have several applications in harmonic analysis, as for $w=1$, they essentially describe elements with absolutely summable Haar expansions. For instance. see \cite{de1986several} and \cite{de1989fourier}. 
In our next result, we shall offer a function theoretical description of SA-sets for $\B_0(w)$ in terms of the space $\mathscr{B}_w(\T)$. To this end, we recall the definition of the Hilbert transform $H$ of a function $\phi \in L^1(\T)$ as
\[
H(\phi)(e^{i\theta}) := \lim_{\varepsilon \to 0+} \int_{\left\{e^{i\theta}:\abs{t-\theta}>\varepsilon \right\}}  \cot \left( \frac{t-\theta}{2}\right) \phi(e^{it}) \frac{dt}{2\pi},
\]
which exists for $dm$-almost every $e^{i \theta}$ on $\T$, see \cite{garnett}. Our function theoretical description of sets satisfying the SA-property wrt $\B_0(w)$ takes the following form.

\begin{thm}\thlabel{THM:CHAR1} Let $w$ be a majorant satisfying \eqref{DiniDiv} and $E \subseteq \T$ be a set of positive Lebesgue measure. Then following statements are all equivalent.
\begin{enumerate}
    \item[(i)] $E$ does not satisfy the SA-property wrt $\B_0(w)$.
    \item[(ii)] $E$ satisfies the weak KO-property wrt $\B_0(w)$.
    \item[(iii)] $E$ almost contains a non-trivial subset $A$ of positive Lebesgue measure, such that $A$ is a set of rigidity for $\B_0(w)$.
    \item[(iv)] There exists a non-trivial pair of real-valued functions $u, v \in L^1(E)$ such that 
\[
u + H(v) \in \mathscr{B}_w(\T) \setminus \{0\},
\]
where $H$ denotes the Hilbert transform on $\T$.

\end{enumerate}

Furthermore, $E$ is a set of rigidity for $\B_0(w)$ if and only if $E$ is a carrier set of at least one of the functions $u,v$ appearing in $(iv)$.

\end{thm}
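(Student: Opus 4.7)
The plan is to establish the four equivalences in stages, deferring the function-theoretic condition (iv) to the end, and then use the same duality machinery to isolate the rigidity case via the carrier condition.

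For (i)$\Leftrightarrow$(iii) and (i)$\Leftrightarrow$(ii) I would rely on the structural decomposition \thref{THM:Bstruc}. The unique partition $E = A \cup S$ into a rigidity part $A$ and an SA-part $S$ renders (i)$\Leftrightarrow$(iii) essentially definitional: $E$ fails SA exactly when the rigidity component $A$ has positive Lebesgue measure. For (i)$\Leftrightarrow$(ii), the direction $\neg$(i)$\Rightarrow \neg$(ii) is immediate, since if $E$ is SA then every nontrivial bounded $f \in \B_0(w)$ produces a pair $(f,0) \in \Po_E \B_0(w)$, contradicting weak KO. For the converse, the isomorphism \eqref{EQ:CHAUM} reduces the task to showing that if $(f,0) \in \Po_A\B_0(w)$ with $f$ bounded and $A$ a rigidity set of positive measure, then $f \equiv 0$. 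Such an $f$ is in $H^\infty$, and by passing to convex combinations of the approximating polynomials one upgrades the weak-star convergence to pointwise a.e.\ convergence on $A$; the F.\ and M.\ Riesz uniqueness theorem then forces $f \equiv 0$.

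The principal content is (i)$\Leftrightarrow$(iv), which I would prove by Hahn-Banach duality. The continuous dual of $\B_0(w) \oplus (L^\infty(E), \text{weak-star})$ is $\B_0(w)^* \oplus L^1(E)$, and a functional $(\ell, g)$ annihilates the diagonal $\mathcal{Q}$ precisely when $\ell(Q) = -\int_E Q g\, dm$ for every analytic polynomial $Q$. Thus $E$ fails SA exactly when such a nontrivial pair exists. To match this annihilator with (iv), I would identify $\B_0(w)^*$ concretely as a distribution space on $\T$ in which $\mathscr{B}_w(\T)$ appears naturally: each atom $b_n$ pairs with Bloch-type functions through a normalized difference of local boundary averages, precisely reproducing the modulus of continuity $w$. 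Writing $g = u - iv$ with real-valued $u, v \in L^1(E)$ and using the identity $P_+\phi = \tfrac{1}{2}(I+iH)\phi$ modulo constants for the Szeg\H{o} projection, the annihilator relation becomes, after taking real parts, the statement that $\tfrac{1}{2}(u + H(v))$ is a nontrivial element of $\mathscr{B}_w(\T)$, which matches (iv). The main obstacle I anticipate lies here: identifying $\B_0(w)^*$ rigorously with the appropriate distribution space, verifying that atoms of $\mathscr{B}_w(\T)$ pair correctly with the boundary behavior of functions in $\B_0(w)$, and justifying the translation from the pairing $\int_E Q g\, dm$ into the Hilbert-transform form. This requires a careful weighted averaging lemma for atoms tested against Bloch functions, together with control of the growth of $Q'$ against the weight $w$ as $|z|\to 1^-$.

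Finally, for the carrier characterization, I would combine the decomposition $E = A \cup S$ with the duality above. If a nontrivial $(u,v)$ as in (iv) had essential support meeting some SA-subset $S' \subseteq E$ in positive measure, restricting the annihilator to $S'$ would produce a nontrivial rigidity obstruction on $S'$, contradicting the SA-property. Conversely, if $E$ is a rigidity set and $(u,v)$ are as in (iv), any positive-measure subset of $E$ on which both $u$ and $v$ vanish would, by the same annihilator argument and \thref{THM:Bstruc}, split off a genuine SA-summand of $E$, contradicting rigidity; hence $E$ must coincide modulo null sets with the essential support of at least one of $u$ or $v$.
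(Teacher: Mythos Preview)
Your route for (i)$\Leftrightarrow$(ii)$\Leftrightarrow$(iii) via \thref{THM:Bstruc} is essentially the paper's (which packages this as \thref{THM:KOprop}), but your specific argument for the weak-KO direction has a gap. You say: take convex combinations of the $Q_n$ to get pointwise a.e.\ convergence to $0$ on $A$, then F.~and M.~Riesz forces $f\equiv 0$. The trouble is that convergence $Q_n\to f$ in $\B_0(w)$ carries \emph{no} information about boundary values --- Bloch-type convergence controls only $(1-|z|)f'(z)/w(1-|z|)$ inside $\D$ --- so you cannot conclude that $f|_A=0$ from $Q_n\to 0$ a.e.\ on $A$. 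The paper circumvents this by a duality argument (\thref{PROP:DualKOw}): when $A$ is a rigidity set, the manifold $\{C(h):h\in L^1(A),\ C(h)\in X'\}$ is dense in the predual $C(L^1)$ of $H^\infty$, and testing the bounded $f$ against all such $C(h)$ kills it.

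For (i)$\Leftrightarrow$(iv) you have the right skeleton but you are making life harder than necessary. The paper does \emph{not} pair atoms of $\mathscr{B}_w(\T)$ directly against $\B_0(w)$; instead it goes through the Cauchy dual. By \thref{LEM:BlochwDual} one has $\B_0(w)'\cong W^1_a(w)$, and Khrushchev's lemma (\thref{SAduality}) says $E$ fails SA iff there is $h\in L^1(E)$ with $C(h)\in W^1_a(w)\setminus\{0\}$. The link to $\mathscr{B}_w(\T)$ is then the Bloom--de~Souza trace theorem (\thref{THM:BWchar}): a function lies in $W^1_a(w)$ exactly when the boundary value of its real part lies in $\mathscr{B}_w(\T)$. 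Writing $h=u+iv$ and decomposing the Cauchy kernel as Poisson plus conjugate kernel gives $\Re C(h)|_{\T}=u-H(v)$, which is (iv) up to a sign. Your anticipated ``weighted averaging lemma for atoms'' is precisely this trace theorem, already available; you should invoke it rather than rebuild it.

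Your carrier argument at the end does not work as written. Restricting $(u,v)$ to an SA-subset $S'$ need not produce a function whose combination $u+H(v)$ remains in $\mathscr{B}_w(\T)$, since the Hilbert transform is nonlocal. The paper's route is direct: by \thref{THM:Irred}, $E$ is a rigidity set iff there exists $h\in L^1(E)$ with $h\neq 0$ $dm$-a.e.\ on $E$ and $C(h)\in W^1_a(w)$; writing $h=u-iv$, the condition $h\neq 0$ a.e.\ on $E$ is exactly that $E$ is the carrier of the pair $(u,v)$.
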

\noindent
The first three properties encapsulates the precise relationships between the different notions of asymptotic polynomial approximation, while $(iv)$ shows that they can all be captured within the realm of real-analysis, and phrased as a function theory problem. Note that $(iv)$ implies that if a Borel set $E$ supports a non-trivial $L^1(\T)$-function in $\mathscr{B}_w(\T)$, then $E$ cannot be an SA-set wrt $\B_0(w)$. Indeed, this observation is rather simple to state and follows by taking the corresponding companion $v=0$ in $(iv)$, but the condition phrased in \thref{THM:CHAR1} is actually a much weaker assumption and does not require that one puts the entire burden on $u$ alone. Even if the set $E$ is so "rough" that it does not support an $L^1(\T)$-function in $\mathscr{B}_w(\T)$, the Hilbert transform is a convolution operator which "spreads out" the mass, hence the companion $v$ can only aid in making the sum $u+H(v)$ land in $\mathscr{B}_w(\T)$. The precise connection between $\mathscr{B}_w(\T)$ and the corresponding Bloch-type space $\B_0(w)$ will be clarified in Section \ref{SEC:5}. We remark that in the classical setting of the Bloch space $\B_0$ (with $w=1$), we shall below see that the corresponding special atomic space $\mathscr{B}_1(\T)$ corresponds to an atomic decomposition of $L^1(\T)$-functions belonging to a certain Besov space $B^1(\T)$ (see \eqref{DEF:B^1} for definition below), hence we may also rephrase $(iv)$ to that setting. 


 
\subsection{Removable sets and capacitary conditions}
Our next objective is to illustrate a connection between simultaneous approximation and removable sets for analytic Sobolev spaces in the plane, which will allow us to complement the function theoretical characterization in \thref{THM:CHAR1}, by giving metric descriptions of SA-sets. To this end, we shall solely restrict our attention to the classical setting of the Bloch space $\B_0$. Let $1\leq p < \infty$ and $\Omega \subseteq \C$ be a domain and define the analytic Sobolev space $W^p_a(\Omega)$, consisting of holomorphic functions $f$ in $\Omega$ such that 
\[
\int_{\Omega} \abs{f'(z)}^p dA(z) < \infty.
\]
Here $dA$ denotes the Lebesgue area measure on $\C$, restricted to $\Omega$. A compact set $K\subset \C$ is declared to be \emph{removable} for $W^1_a$ if the space $W^p_a(\C \setminus K) $ only consists of constants. The study of removable sets for analytic Sobolev spaces has a vast history, originating from the profound work of L. Ahlfors and A. Beurling in \cite{ahlfors1950conformal}, where the classical Dirichlet space $W^2_a$ was studied. Their results contained various descriptions of removable sets in the Dirichlet setting $W^{2}_a$, involving notations of (condenser) capacities and extremal distances, using Hilbert space methods and precise reproducing formulas. One their result commonly highlighted in the literature, asserts that a compact set $K \subset \T$ is removable for $W^2_a$ if and only if
\begin{equation}\label{EQ:ABNCAP}
\textbf{Cap}(I\setminus K) = \textbf{Cap}(I)
\end{equation}
for any arc $I \subseteq \T$, where $\textbf{Cap}(\cdot)$ denotes either the logarithmic capacity or the $1/2$-Bessel capacity. See also \cite{makarov1989class}. Later, L. Hedberg extended their work to the separable setting $1<p<\infty$, providing descriptions of similar kind, but using duality methods and Calder\'on-Zygmund theory, see \cite{hedberg1974removable}. Very little is known about removable sets for $W^1_a$, as this case seems resistant to methods previously potent in the setting of $p>1$. Although, a detailed investigation of this question is beyond the scope of our work, we shall below demonstrate a connection between removable sets for $W^1_a$ to sets satisfying the SA-property in $\B_0$. To put our result into a broader context, we mention the work of S. Khrushchev in \cite{khrushchev1978problem}, where he proved that compact subsets $K \subset \T$ which are not removable for $W^2_a$ must necessarily contain subset of finite Beurling-Carleson entropy, hence cannot satisfy the SA-property wrt any Bergman space. Later, N. Makarov gave a more streamlined proof of this fact, by establishing a deep connection between notions of entropy and Hausdorff contents (see \cite{makarov1989class} and subsection 2.4 below). In order to give an analogue in the Bloch space setting, we shall introduce a certain Besov space. Let $t>0$ and consider the second translated difference of a function $f$ on $\T$, defined as
\[
\Delta_2(f,t)(\zeta):= f(\zeta e^{it})+f(\zeta e^{-it})-2f(\zeta), \qquad \zeta \in \T.
\]
We denote by $B^1(\T)$ the Besov space, which consists of functions $g\in L^1(\T)$ equipped with the semi-norm 
\begin{equation}\label{DEF:B^1}
\norm{g}_{B^1} = \int_0^1 \int_{\T} \abs{\Delta_2(G,t)(\zeta)} dm(\zeta) \frac{dt}{t^2} < \infty,
\end{equation}
where $G(e^{it})= \int_0^t g(e^{is}) ds$ denotes a primitive of $g$. In the broader context, primitives of functions in $B^1(\T)$ are elements in the diagonal Besov space $B^1_{1,1}(\T):= B^{\alpha}_{p,p}(\T)$ (a typical notation in the literature) defined on the unit circle $\T$, with Lebesgue parameter(s) $p=1$ and smoothness parameter $\alpha=1$, making it an end-point case wrt both these parameters. It turns out that $B^1(\T) \cong \mathscr{B}_1(\T)$, in the sense of being isomorphic Banach spaces with equivalent norms, were the later space may be viewed as an atomic decomposition of $B^1(\T)$, see Section \ref{SEC:3}. Consider the Zygmund class $\Lambda_1(\T)$, consisting of continuous functions $f$ on $\T$ satisfying 
\[
\norm{f}_{\Lambda_1} := \sup_{\zeta \in \T, t>0} \frac{\abs{\Delta_2(f,t)(\zeta)}}{t} < \infty.
\]
Taking the closure of trigonometric polynomials in the semi-norm of $\Lambda_1(\T)$, one retrieves the "little"-o Zygmund class $\lambda_1(\T)$, which consists of continuous function $f$ on $\T$ satisfying
\[
\lim_{\delta \to 0+} \sup_{\zeta \in \T, 0<t< \delta} \frac{\abs{\Delta_2(f,t)(\zeta)}}{t} =0.
\]
It turns out that one can identify the dual space of $\mathscr{B}_1(\T)$ (hence also $B^1(\T)$) with $\Lambda_1(\T)$, considered in the densely defined pairing:
\begin{equation}\label{EQ:ZYGBWDual}
h \mapsto \int_{\T} h(\zeta) f'(\zeta) dm(\zeta), \qquad h\in \mathscr{B}_1(\T), \qquad f \in C^\infty(\T).
\end{equation}
We refer the reader to \cite{bloom1989atomic} for further details. Given a compact set $K$, we denote by $C^\infty_K(\T)$ the set of functions $\phi \in C^\infty(\T)$, whose derivative $\phi'$ is compactly supported in $\T \setminus K$. Given an arc $I \subseteq \T$, we define the $\Lambda_1$-condenser capacity of $I$ with respect to a compact set $K$ by 
\begin{equation}\label{EQ:LwCondCap}
\textbf{Cap}_{\Lambda_1}(I / K) := \inf_{\phi  \in C^\infty_K(\T) } 
\, \norm{\phi}_{\Lambda_1},
\end{equation}
where the infimum is taken over all $\phi \in C_K^\infty(\T)$ with $\phi =0 $ on one end-point of $I$ and $\phi=1$ on the other end-point. The ordinary $\Lambda_1$-condenser capacity of an arc $I$ is denoted by $\textbf{Cap}_{\Lambda_1}(I)$, and defined as the infimum over $\phi \in C^\infty(\T)$ with the above prescribed end-point values on $I$. This definition should be viewed as an $\Lambda_1$-analogue of a condenser-type capacity introduced by L. Hedberg in the context of Sobolev spaces, where the notation $I/K$ arises from the conception that the set $K$ is "quotient out" in the subcollection $C_K^\infty(\T)$ with the above prescribed properties, see \cite{hedberg1974removable}. Our next result gives several characterizations of removable sets for $W^1_a$, which are confined in the unit-circle, and may be viewed as an end-point analogue to the aforementioned results of Ahlfors, Beurling and Hedberg.


\begin{thm}\thlabel{THM:CHARREM} Let $K \subset \T$ be a compact set of positive Lebesgue measure. Then the following statements are equivalent:
\begin{enumerate}
    \item[(i)] $K$ is removable for the analytic Sobolev space $W^1_a$.
    \item[(ii)] There exists no non-trivial function $g \in B^1(\T)$ (or in $\mathscr{B}_1(\T))$, which is supported in $K$.
    \item[(iii)] $C^\infty_K(\T)$ is norm-dense in the little Zygmund class $\lambda_1(\T)$.
\end{enumerate}
Furthermore, if $K$ is negligible for any $\Lambda_1$-condenser capacity in the sense that condition
\begin{equation}\label{REMB1}
     \textbf{Cap}_{\Lambda_1}(I/K) = \textbf{Cap}_{\Lambda_1}(I)
\end{equation}
holds for any arc $I\subseteq \T$, then $K$ is removable for $W^1_a$.

\end{thm}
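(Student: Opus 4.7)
The overall strategy is to route the three conditions through the Besov space $B^1(\T)$, identifying it as the boundary trace space of $W^1_a(\D)$, and then to exploit the duality pairing $\lambda_1(\T)^* \cong \mathscr{B}_1(\T) \cong B^1(\T)$ from \eqref{EQ:ZYGBWDual}. The capacitary sufficient condition will be handled separately by a direct approximation argument in the little Zygmund class.

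First I would establish the trace characterization: for any $f \in W^1_a(\D)$, the non-tangential boundary values $f^*$ exist $dm$-almost everywhere and lie in $B^1(\T)$, with $\int_\D \abs{f'} dA \asymp \norm{f^*}_{B^1(\T)}$, and analogously for the exterior disc $\D_e := \{ \abs{z} > 1 \}$ by reflection. This can be proved via a second-difference/Douglas-type computation relating the $B^1$-seminorm of the primitive of $f^*$ to the area integral of $\abs{f'}$. Granted this, given $f \in W^1_a(\C \setminus K)$, set $f_i := f|_{\D}$ and $f_e := f|_{\D_e}$; the jump $g := f_i^* - f_e^* \in B^1(\T)$ is supported in $K$ since $f$ is holomorphic across $\T \setminus K$. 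If $g \equiv 0$, then $f_i$ and $f_e$ glue across $\T$ into an entire function with finite $\int_\C \abs{f'} dA$, forcing $f$ to be constant. Conversely, given a non-trivial $g \in B^1(\T)$ supported in $K$, the Cauchy integral $(2\pi i)^{-1} \int_\T g(\zeta)(\zeta - z)^{-1} d\zeta$ defines a non-constant element of $W^1_a(\C \setminus K)$ with jump $g$ across $\T$, with the $W^1_a$-bound following from the atomic decomposition of $B^1(\T) \cong \mathscr{B}_1(\T)$. This yields (i) $\Leftrightarrow$ (ii).

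The equivalence (ii) $\Leftrightarrow$ (iii) is Hahn-Banach: $C^\infty_K(\T)$ is norm-dense in $\lambda_1(\T)$ (modulo additive constants) if and only if its annihilator in $\mathscr{B}_1(\T)$ is trivial. Since the derivatives of functions in $C^\infty_K(\T)$ are precisely the zero-mean smooth functions compactly supported off $K$, a routine argument shows that $h \in \mathscr{B}_1(\T)$ annihilates $C^\infty_K(\T)$ if and only if $h$ agrees with a constant $dm$-almost everywhere on $\T \setminus K$, equivalently, $h$ is supported in $K$ modulo constants. Hence (iii) fails precisely when a non-trivial element of $\mathscr{B}_1(\T) \cong B^1(\T)$ is supported in $K$, which is the negation of (ii).

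The capacitary implication \eqref{REMB1} $\Rightarrow$ removability is the main obstacle. My plan is to deduce \eqref{REMB1} $\Rightarrow$ (iii) by a direct approximation argument: given $f \in \lambda_1(\T)$, first reduce via mollification and a partition of unity on $\T$ to approximating piecewise-linear tent functions supported on small arcs. For each such tent on an arc $I$, condition \eqref{REMB1} supplies a sequence $\phi_n \in C^\infty_K(\T)$ interpolating the same endpoint values on $I$ with $\norm{\phi_n}_{\Lambda_1}$ approaching the optimal unconstrained capacity $\textbf{Cap}_{\Lambda_1}(I)$; splicing these $\phi_n$ arc by arc via a partition of unity should produce a global $C^\infty_K$-approximant to $f$. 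The delicate part is to coordinate the splicing so that the cumulative $\Lambda_1$-error does not inflate. An alternative, perhaps cleaner route is by contradiction: negating (iii) produces a non-trivial $g \in B^1(\T)$ supported in $K$ through the duality established above, whose primitive $G$ is locally constant on $\T \setminus K$ and yet globally non-constant, so one can pick an arc $I$ across which $G$ varies and show through the dual pairing that $\textbf{Cap}_{\Lambda_1}(I/K) > \textbf{Cap}_{\Lambda_1}(I)$, contradicting \eqref{REMB1}.
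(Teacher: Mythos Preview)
Your overall architecture matches the paper's: use the trace identification $W^1_a(\D)\leftrightarrow B^1(\T)$ and the jump of interior/exterior boundary values for (i)$\Leftrightarrow$(ii), and the $\lambda_1$--$\mathscr{B}_1$ duality with Hahn--Banach for (ii)$\Leftrightarrow$(iii). The direction ``$K$ not removable $\Rightarrow$ $K$ supports a nontrivial $B^1$-function'' is handled exactly as in the paper.

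There is, however, a genuine gap in your converse. You form the two-sided Cauchy integral of a nontrivial $g\in B^1(\T)$ supported in $K$ and claim it lies in $W^1_a(\C\setminus K)$, the exterior bound ``following from the atomic decomposition of $B^1(\T)\cong\mathscr{B}_1(\T)$''. This does not work as stated. For $|z|>1$ the Cauchy integral of $g$ equals $-\sum_{n\le -1}\hat g(n)z^{n}$, so its derivative behaves like $\hat g(-1)z^{-2}$ near infinity, and $\int_{|z|>R}|z|^{-2}\,dA(z)$ diverges logarithmically. Equivalently, after the reflection $z=1/\conj{\lambda}$ (for real-valued $g$) the exterior integral becomes $\int_{\D}|C(g)'(\lambda)|\,|\lambda|^{-2}\,dA(\lambda)$, which blows up at $\lambda=0$ unless $C(g)'(0)=0$. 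A single special atom $b$ on an arc $I$ has $\hat b(\pm 1)$ of order $|I|\neq 0$, so summing atoms term by term does not remove the obstruction. The paper addresses this with a device you are missing: it replaces $g$ (taken real-valued) by $Qg$ for a degree-one polynomial $Q(\zeta)=a\zeta+1$ with $a$ chosen so that $C(Qg)'(0)=0$; then $C(Qg)'(\lambda)/\lambda$ is holomorphic on $\D$ and a division lemma in the Bergman space (\thref{LEM:DIVBERG}) absorbs the factor $|\lambda|^{-2}$, giving $\int_{|z|>1}|C(Qg)'|\,dA<\infty$.

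For the capacitary implication \eqref{REMB1}$\Rightarrow$(i), the paper offers only a one-sentence justification via (iii) and the equivalence (i)$\Leftrightarrow$(iii); your two sketched routes are at least as detailed as what the paper records there.
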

Note that condition $(ii)$ asserts that the only impediment for a compact subset of $\T$ to be removable for $W^1_a$ is that it supports a non-trivial element in $B^1(\T)$ or $\mathscr{B}_1(\T)$. This should be contrasted with statement $(iv)$ in \thref{THM:CHAR1} on SA-sets for $\B_0$, which is seemingly weaker. Condition $(iii)$ gives a description in terms of an approximation problem in the little-o Zygmund spaces $\lambda_1(\T)$, which may as well be reformulated in the setting of weak-star density in the Zygmund space $\Lambda_1(\T)$. In particular, it implies that no compact subset of $K$ satisfying $(iii)$ can every support a non-trivial function in $\Lambda_1(\T)$. Note that it also reveals that the problem of removable sets for $W^1_a$ is intrinsically a local problem within the realm of real-analysis, and a similar description as $(iii)$ also appears in the context of removable sets for $W^p_a$ in \cite{hedberg1974removable}. We conjecture that condition \eqref{REMB1} may give yet another characterization of removable sets for $W^1_a$, but we have unfortunately not been able to verify this claim. One can show that the ordinary $\Lambda_1$-condenser capacity of an arc $I$ is attained for a trigonometric polynomial $p_I$ of degree $1$, but due to lack of convexity in our setting, we cannot ensure that $C^\infty_K(\T)$-functions approximate any $p_I$ in weak-star of $\Lambda_1$. Since there is no natural Bessel-potential space which corresponds to the Zygmund class $\Lambda_1$, we can unfortunately not offer any closer analogue to the Ahlfors-Beurling condition in \eqref{EQ:ABNCAP}, besides \eqref{REMB1}. However, we invite the reader to compare \label{REMB1} to the condition involving Hausdorff contents in \thref{COR:Hwinv} below. Now comparing part $(iv)$ in \thref{THM:CHAR1} with $(ii)$ of \thref{THM:CHARREM}, we obtain an intimate connection between sets satisfying the SA-property wrt $\B_0$ and removable sets for $W^1_a$, similar in spirit to Khrushchev's result.
\begin{cor}\thlabel{THM:REMSA} 
If a compact subset $K \subset \T$ of positive Lebesgue measure satisfies the SA-property wrt $\B_0$, then it is removable for the analytic Sobolev space $W^1_a$. 
\end{cor}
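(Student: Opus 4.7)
The plan is to argue by contraposition, combining \thref{THM:CHAR1}(iv) (applied with the majorant $w \equiv 1$) with \thref{THM:CHARREM}(ii). First note that $w \equiv 1$ is a valid majorant satisfying the Dini divergence condition \eqref{DiniDiv}, and that $\B_0(w) = \B_0$ in this case, so \thref{THM:CHAR1} is available.

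Assume $K \subset \T$ is \emph{not} removable for $W^1_a$. By \thref{THM:CHARREM}(ii), there exists a non-zero $g \in \mathscr{B}_1(\T)$ supported in $K$. The special atoms $b_n$ associated to $w \equiv 1$ are real-valued, bounded, and satisfy $\norm{b_n}_{L^1(\T)} = 1$, so any atomic representation $g = \sum_n c_n b_n$ with $\sum_n \abs{c_n} < \infty$ converges absolutely in $L^1(\T)$. Hence $g \in L^1(\T)$, its support as a function coincides with its support as a distribution, and splitting the coefficients into real and imaginary parts produces two real-valued elements of $\mathscr{B}_1(\T)$, both vanishing off $K$, at least one of which is non-zero. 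Call it $u$.

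Setting $v \equiv 0$, we obtain a non-trivial pair $(u,v)$ of real-valued functions in $L^1(K)$ (using the standing convention that $L^1(K)$ is the subspace of $L^1(\T)$ whose elements vanish $dm$-a.e.\ off $K$) satisfying
\[
u + H(v) = u \in \mathscr{B}_1(\T) \setminus \{0\}.
\]
This is precisely condition $(iv)$ of \thref{THM:CHAR1} for $E = K$, so by the equivalence $(i) \Leftrightarrow (iv)$ in that theorem, $K$ fails to be an SA-set for $\B_0$. Contrapositively, if $K$ does satisfy the SA-property with respect to $\B_0$, then $K$ must be removable for $W^1_a$.

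I do not anticipate any serious obstacle, as the corollary is essentially a bookkeeping exercise chaining the two characterization theorems. The only point requiring a moment of attention is that \thref{THM:CHAR1}(iv) is purely existential in the pair $(u,v)$, so nothing prevents us from choosing $v \equiv 0$ and letting the $\mathscr{B}_1(\T)$-function produced by \thref{THM:CHARREM} serve as the sole witness; this observation makes transparent why the direction ``SA-property implies removability'' follows so cleanly, even though the converse implication is not addressed by this argument.
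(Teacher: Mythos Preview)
Your proof is correct and follows essentially the same approach as the paper, which derives the corollary by comparing condition (iv) of \thref{THM:CHAR1} with condition (ii) of \thref{THM:CHARREM} and taking $v=0$. The paper also remarks, after the proof of \thref{THM:CHARREM}, that Step~1 of that proof together with Khrushchev's lemma (\thref{SAduality}) gives a direct route bypassing \thref{THM:CHAR1} altogether.
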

\noindent
Note that the existence of non-trivial compact sets which are removable for $W^1_a$ is ensured by \thref{THM:SAsets}. Part $(iv)$ of \thref{THM:CHAR1} indicates that SA-sets for $\B_0$ are likely to be more complicated than removable set for $W^1_a$, as viewed from $(ii)$ in \thref{THM:CHARREM}. In fact, we shall see that SA-sets for $\B_0$ can be rephrased as a one-sided problem of removable singularities for Cauchy transforms belonging to $W^1_a(\D)$, while removable sets for $W^1_a$ boils down to a two-sided problem for Cauchy transforms (in and outside the unit disc). Similar observations in the context of growth spaces were obtained by Khrushchev, see \cite{khrushchev1978problem}. Our next observation provides a more practical necessary condition for a set to be removable for $W^1_a$, hence also an SA-set for $\B_0$. It essentially asserts that sets which are too evenly distributed in $\T$ cannot be removable for $W^1_a$, and are in fact sets of rigidity for $\B_0$.

\begin{prop}\thlabel{PROP:NecDR} Let $E \subset \T$ be a Lebesgue measurable set satisfying the condition
\begin{equation}\label{EQ:Symcond}
\int_0^1 \abs{(E+t) \triangle E} \, \frac{dt}{t} < \infty,
\end{equation}
then $E$ almost contains a compact subset which is not removable for $W^1_a$, and $E$ is a set of rigidity for $\B_0$. 

\end{prop}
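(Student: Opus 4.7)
The plan is to translate the hypothesis into a Besov-type regularity condition for $\mathbf{1}_E$ and then apply Theorems \thref{THM:CHAR1} and \thref{THM:CHARREM}. The key step establishes $\mathbf{1}_E \in B^1(\T) \cong \mathscr{B}_1(\T)$.

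Set $G(e^{i\theta}) := \int_0^\theta \mathbf{1}_E(e^{is})\, ds$. A direct calculation gives
\[
\Delta_2(G, t)(\zeta) = \int_0^t \bigl[\mathbf{1}_E(\zeta e^{is}) - \mathbf{1}_E(\zeta e^{-is})\bigr]\, ds,
\]
so that combining the triangle inequality, translation invariance of Lebesgue measure, and a Fubini swap yields
\[
\|\mathbf{1}_E\|_{B^1(\T)} \lesssim \int_0^1 \frac{|(E+s) \triangle E|}{s}\, ds < \infty,
\]
establishing the desired regularity.

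For the rigidity of $E$, I apply Theorem \thref{THM:CHAR1} with $w = 1$, taking $u := \mathbf{1}_E$ and $v := 0$, both in $L^1(E)$. Then $u + H(v) = \mathbf{1}_E \in \mathscr{B}_1(\T) \setminus \{0\}$, and the carrier of $u$ is $E$. The final clause of Theorem \thref{THM:CHAR1} gives that $E$ is a set of rigidity for $\B_0$.

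For the compact non-removable subset, Theorem \thref{THM:CHARREM}~(ii) reduces the task to producing a compact $K$ with $|K \setminus E| = 0$ and $|K| > 0$ that supports a non-trivial $g \in B^1(\T)$. The naive attempt of taking $K$ as the essential support of $\mathbf{1}_E$ fails in general, since this essential support can contain measure-theoretic boundary points of $E$ (as for $E$ dense open with $|E| < |\T|$). I remedy this by localizing: pick a Lebesgue density-$1$ point $\zeta_0 \in E$ and a smooth cutoff $\phi \in C^\infty(\T)$ supported in a small arc around $\zeta_0$ on which $E$ has density close to $1$; setting $g := \phi \cdot \mathbf{1}_E$, the fact that $C^\infty$ acts as multipliers on $B^1(\T)$ yields $g \in B^1(\T) \setminus \{0\}$, and I take $K$ to be its essential support.

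The main obstacle is the containment $|K \setminus E| = 0$. This likely demands a case split: when $E$ possesses a measure-theoretic interior, $K$ can be chosen as a closed sub-arc of that interior, yielding the trivially non-removable compact $K$ via $g = \phi \in B^1(\T)$; when $E$ is essentially closed (e.g.\ fat Cantor-like, so that $|\overline{E^*} \setminus E^*| = 0$), one takes $K$ to be the measure-theoretic support of $\mathbf{1}_E$ itself, so that $g = \mathbf{1}_E$ works. In either case, the extracted compact $K$ is almost contained in $E$ and non-removable for $W^1_a$ by Theorem \thref{THM:CHARREM}.
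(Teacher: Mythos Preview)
Your core argument---showing $1_E \in B^1(\T)$ via the second-difference estimate on the primitive $G$ and then invoking \thref{THM:CHAR1} with $u = 1_E$, $v = 0$---is correct and is exactly the paper's approach; the paper simply asserts $1_E \in B^1 \equiv \mathscr{B}_1$ without writing out your Fubini computation. The rigidity conclusion via the carrier-set clause of \thref{THM:CHAR1} is also right and matches the paper.

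For the compact non-removable subset, you have correctly put your finger on a genuine subtlety that the paper's one-line proof does not address: the essential support of $1_E$ need not be almost contained in $E$. However, your proposed case split is not exhaustive. There exist sets satisfying \eqref{EQ:Symcond} that have neither measure-theoretic interior (every arc $I$ meets $\T\setminus E$ in positive measure) nor are essentially closed (the essential support of $1_E$ has strictly larger measure than $E$); a dense countable union of disjoint fat Cantor sets with sufficiently fast-decaying lengths is a candidate. For such $E$ neither branch of your split applies, and the smooth-cutoff localization does not help either, since the essential support of $\phi\cdot 1_E$ equals $\text{supp}\,\phi \cap \text{ess\,supp}\,1_E$ and can still carry positive mass outside $E$. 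So this half of the proposition remains incomplete in your write-up, just as it is in the paper's own proof, which only establishes rigidity and does not return to the compact-subset claim.
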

\noindent
Here $E+t :=\{ e^{i(s+t)}: e^{is} \in E \}$ is the translate of $E$ and $\triangle$ denotes the symmetric difference between sets. In particular, the proposition implies that if $E$ satisfies the SA-property wrt $\B_0$, then it cannot almost contain any subset $K$ which satisfies condition \eqref{EQ:Symcond}. For instance, we shall see that \thref{PROP:NecDR} gives a simple proof that sets of finite Beurling-Carleson entropy of positive Lebesgue measure are not removable for $W^1_a$.




\subsection{Entropy conditions and Hausdorff contents}
Given majorant $w$, a compact set $K \subseteq \T$ of positive Lebesgue measure is said to be a \emph{set of finite $w$-entropy} if the following condition holds:
\begin{equation}\label{wset}
\sum_n |I_n| \log w(|I_n|) > -\infty
\end{equation}
where $\{I_n\}_n$ are the connected components of $\T \setminus K$. As previously mentioned, S. Khrushchev proved that sets of finite $w$-entropy can be used to characterize SA-sets in the context of growth spaces $\G_w$, see \cite{khrushchev1978problem}. Moreover, sets of finite $w$-entropy having zero Lebesgue measure, are actually the boundary zero sets of holomorphic functions in $\D$ which extend continuously up to $\cD$, having modulus of continuity not exceeding the majorant $w$ there. See \cite{shirokov1982zero}. More recently, sets of finite $w$-entropy of zero Lebesgue measure zero were shown to play a decisive role in the theory of shift invariant subspaces in growth spaces, \cite{limani2023m_z}. Let $\zeta \in \T$ and denote by
\[
\Gamma(\zeta):= \left\{z\in \D: \abs{z-\zeta} \leq \sigma (1-|z|) \right\}
\]
the Stolz domain in $\D$ associated to $\zeta$ of some fixed aperture $\sigma>1$. Given a compact set $K \subset \T$, we set $\Gamma_K$ to be the Privalov-Stolz domain in $\D$ associated to $K$, defined by
\begin{equation}\label{EQ:PrivStolz}
    \Gamma_K := \cup_{\zeta \in E} \Gamma(\zeta),
\end{equation}
where each $\Gamma(\zeta)$ are assumed to have the same fixed aperture $\sigma >1$. Of course, each $\sigma >1$ gives rise to different Privalov-Stolz domains, but as they their specific shape will not be important for us, we leave it unspecified. Our next result gives a geometric sufficient condition for compact sets $K \subset \T$ to be a sets of rigidity for $\B_0$ and more.

\begin{thm}\thlabel{THM:suffwG} Let $\gamma>0$ be a number such that $w^\gamma$ is a majorant satisfying the Dini condition
\begin{equation}\label{WDiniCond}
\int_0^1 \frac{w^{\gamma}(t)}{t} dt < \infty.
\end{equation}
Then any set of finite $w$-entropy is a set of rigidity for $\B_0$. Furthermore, for any $(f , f^*) \in \Po_K \B_0(w)$, the holomorphic function $f$ on $\D$ has non-tangential limit equal to $f^*$ $dm$-a.e on $K$, and the following embedding holds
\[
\Po_K \B_0(w) \hookrightarrow H^\infty(\Gamma_K),
\]
for any Stolz-Privalov domain $\Gamma_K$, where $H^\infty(\Gamma_K)$ denotes the Hardy space of bounded holomorphic functions on $\Gamma_K$.
\end{thm}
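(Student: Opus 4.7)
My plan is to establish the embedding $\Po_K \B_0(w) \hookrightarrow H^\infty(\Gamma_K)$ of assertion (c) first; from this, (b) follows by Fatou's theorem on $\Gamma_K$ together with weak-$*$ continuity of the boundary trace map, and (a) follows via a Privalov-type uniqueness argument. Concretely, for any sequence of polynomials $(Q_n)_n$ with $\|Q_n\|_{\B(w)} \leq M'$ and $\|Q_n\|_{L^\infty(K)} \leq M$, the target is a uniform bound $|Q_n(z)| \leq C(M, M')$ for all $z \in \Gamma_K$.

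The core construction is an auxiliary outer function $\Phi \in H^\infty$ adapted to $K$: its boundary modulus is close to $1$ on $K$ and decays like $w^\gamma(d(\zeta, K))$ on $\T \setminus K$, with a mild truncation near $\partial K$ to keep $\log |\Phi| \in L^1(\T)$. The finite $w$-entropy of $K$ is exactly the $L^1$-integrability condition that makes $\Phi$ well-defined and nontrivial, while the Dini condition on $w^\gamma$ ensures, via classical estimates for outer functions with H\"older-type boundary moduli, that $\Phi$ lies in the disc algebra $A(\cD)$ with boundary modulus of continuity controlled by $w^\gamma$. For $z \in \Gamma(\zeta_0)$ with $\zeta_0 \in K$, the Poisson integral representation of $\log |\Phi|$ produces a positive lower bound $|\Phi(z)| \geq c(\sigma)$ depending only on the aperture $\sigma$, because the $K$-contribution (where $\log |\Phi| \approx 0$) dominates the contribution from $\T \setminus K$ (controlled by the finite $w$-entropy).

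To bound $|Q_n(z)|$ on $\Gamma_K$, I estimate $|(Q_n \Phi)(z)|$ first and then divide by the lower bound on $|\Phi(z)|$. On each arc $I_n$ of $\T \setminus K$ with endpoints $\zeta_k^\pm \in K$, the boundary value $|Q_n(\zeta)|$ for $\zeta \in I_n$ is estimated via path integration of $|Q_n'|$ through a depth $|I_n|$ inside $\D$, starting from $\zeta_k^\pm$ where $|Q_n| \leq M$. The naive Bloch bound involves the integral $\int_0^{|I_n|} w(s)/s\, ds$, which diverges whenever $w$ itself fails to be Dini; however, the $w^\gamma$-decay of $\Phi$ on $I_n$ compensates exactly, effectively replacing the integrand by $w(s) w^\gamma(s)/s$, integrable by the Dini condition on $w^\gamma$. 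Summing the per-arc bounds using the finite $w$-entropy of $K$ yields uniform control on $|Q_n \Phi|$ over $\T$, hence over $\cD$ by the maximum modulus principle, completing the embedding (c).

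The remaining assertions are standard consequences. The sequence $(Q_n)_n$ forms a normal family on $\Gamma_K$ converging locally uniformly to $f \in H^\infty(\Gamma_K)$; Fatou's theorem for bounded holomorphic functions on $\Gamma_K$ yields non-tangential boundary values of $f$ a.e.\ on $K$, and weak-$*$ continuity of the boundary trace $H^\infty(\Gamma_K) \to L^\infty(K)$ identifies them with $f^*$, proving (b). The rigidity statement (a) follows from the classical Privalov uniqueness theorem: a holomorphic function on $\D$ whose non-tangential limits vanish on a subset of $\T$ of positive measure must be identically zero. The main technical obstacle is the arc-by-arc estimate: the quantitative balance between the Dini condition on $w^\gamma$ and the finite $w$-entropy of $K$, set up precisely to tame the divergent Bloch integral via the $w^\gamma$-weight of $\Phi$, is the heart of the proof.
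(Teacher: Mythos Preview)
Your overall architecture---build an outer function adapted to $K$, bound $Q_n$ times that function, then divide out---is the same as the paper's. But the step where you apply the maximum modulus principle has a genuine gap. You want a uniform bound for $|Q_n\Phi|$ on all of $\T$, which forces you to control $|Q_n(\zeta)|$ for $\zeta\in I_n\subset\T\setminus K$. Your proposed path integration from an endpoint $\zeta_k^\pm\in K$ through depth $|I_n|$ back out to $\zeta$ cannot yield a finite bound from the Bloch norm alone: on the radial segments the only available estimate is $|Q_n'(z)|\le M'(1-|z|)^{-1}$ (for the classical Bloch space), and $\int_0^{|I_n|}s^{-1}\,ds$ diverges. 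Multiplying by $|\Phi(\zeta)|\asymp w^\gamma(d(\zeta,K))$ at the \emph{endpoint} does not tame this, and the claim that the integrand is ``effectively replaced by $w(s)w^\gamma(s)/s$'' would require $|\Phi(z)|\lesssim w^\gamma(1-|z|)$ \emph{along the path}. Near $\zeta_k^\pm\in K$ you have $|\Phi|\approx 1$, so on the first radial leg $|Q_n'\Phi|\asymp M'/s$ and the integral still diverges. If instead you try to integrate $|(Q_n\Phi)'|$, you need pointwise control on $\Phi'$ inside $\D$, which you have not arranged.

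The paper avoids this entirely by applying subharmonicity with harmonic measure on the sawtooth domain $\Gamma_K$ rather than on $\D$. Its boundary splits as $K\cup(\partial\Gamma_K\cap\D)$: on $K$ one has $|Q_n|\le M$; on $\partial\Gamma_K\cap\D$ the Bloch growth gives only $|Q_n(z)|\lesssim M'\log(1/(1-|z|))$, and the cut-off function $f_K$ of Lemma~6.1 is constructed so that $|f_K(z)|\lesssim w(1-|z|)$ precisely on $\partial\Gamma_K\cap\D$. The Dini condition then gives $\sup_t w(t)\log(e/t)<\infty$, so $|f_KQ_n|$ is bounded on $\partial\Gamma_K$, hence on $\Gamma_K$. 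The crucial estimate is the \emph{interior} decay of $f_K$ on $\partial\Gamma_K\cap\D$, not just its boundary behaviour on $\T$; this is what your $\Phi$ lacks.

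Separately, your route to rigidity (embedding $\Rightarrow$ Fatou $\Rightarrow$ Privalov) is different from the paper's, which proves rigidity directly via duality: it exhibits $h=\overline{\zeta f_K}\,1_K\in L^1(K)$ with $C(h)\in W^1_a(\D)$, using the identity $C(\overline{\zeta f_K})=0$ and the fact that $\overline{\zeta f_K}\,1_{\T\setminus K}\in C_w(\T)$ has Cauchy projection in $W^1_a(\D)$. Your deduction of rigidity from the embedding would be valid once the embedding is fixed, but as it stands the embedding argument needs to be reworked along the $\Gamma_K$-harmonic-measure lines.
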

In particular, if a Lebesgue measurable set $E$ almost contains a set $K$ of finite $w$-entropy, then $E$ satisfies the weak KO-property wrt $\B_0$, hence cannot satisfy the SA-property wrt $\B_0$. Example of eligible majorants $w$ are, for instance, $t^{\min{1,\alpha}}, \log^{-\alpha}(e/t)$ with $\alpha>0$. The second statement in \thref{THM:suffwG} provides an intimate relationship between $f$ and its boundary function $f^*$, which shows that irreducible spaces $\Po_K \B_0$ behave like the classical Hardy spaces, which underpins a stark contrast between sets of rigidity and sets satisfying the SA-property. This is the analogue to the theory of irreducible $\Po^t(d\mu)$-spaces, mentioned in the introduction. Our next observation essentially asserts that the Dini-condition on the majorant $w$ cannot be improved much further. 

\begin{thm} \thlabel{THM:SHARPENT}
Let $w$ be a majorant satisfying 
\[
\int_{0}^1 \frac{w^2(t)}{t} dt = \infty,
\]
but such that condition \eqref{WDiniCond} holds for some $\gamma >2$. Then there exists compact sets $K$ which satisfies the SA-property wrt $\B_0$ and almost contains no subset of finite $w$-entropy.
    
\end{thm}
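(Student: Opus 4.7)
The plan is a soft argument combining three of the previously established results: the existence theorem \thref{THM:SAsets}, the SA-characterization \thref{THM:CHAR1}, and the rigidity criterion \thref{THM:suffwG}. The key conceptual observation is that the clause \emph{``almost contains no subset of finite $w$-entropy''} is not an additional property to be engineered by a careful construction, but rather a formal consequence of the SA-property for $\B_0$ once the hypothesis $\int_0^1 w^\gamma(t)/t\,dt < \infty$ for some $\gamma > 2$ is in force. Thus the task reduces to producing a compact set of positive Lebesgue measure satisfying the SA-property wrt $\B_0$.

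For the existence part, my plan is to apply \thref{THM:SAsets} with the trivial majorant $w_0 \equiv 1$. This qualifies as a majorant because $w_0(t)/t^{\alpha} = t^{-\alpha}$ increases to $\infty$ as $t \to 0^+$ for any $\alpha \in (0,1)$; moreover $\B_0(w_0) = \B_0$ and the Dini-divergence condition $\int_0^1 w_0^2(t)/t\,dt = \int_0^1 dt/t = \infty$ holds tautologically. Hence \thref{THM:SAsets} produces a set $E \subset \T$ of positive Lebesgue measure satisfying the SA-property wrt $\B_0$. Inner regularity of Lebesgue measure then yields a compact subset $K \subseteq E$ of positive measure. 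Finally, the SA-property passes to $K$: by \thref{THM:CHAR1}(iii), any positive-measure rigidity subset $A \subseteq K$ would also be a positive-measure rigidity subset of $E$, contradicting the SA-property of $E$. Hence $K$ itself is an SA-set for $\B_0$.

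Second, I would argue by contradiction that $K$ almost contains no subset of finite $w$-entropy. Suppose there is a compact set $F \subset \T$ of positive Lebesgue measure with finite $w$-entropy such that $F$ is almost contained in $K$. The hypothesis $\int_0^1 w^\gamma(t)/t\,dt < \infty$ for some $\gamma > 2$ places us precisely in the scope of \thref{THM:suffwG}, which yields that $F$ is a set of rigidity for $\B_0$. Thus $K$ almost contains a positive-measure rigidity subset, contradicting the SA-property of $K$ via \thref{THM:CHAR1}(iii) applied with $w = 1$.

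Regarding the hypothesis $\int_0^1 w^2(t)/t\,dt = \infty$, it is not directly invoked in this reduction; its role is to place the theorem in the sharp regime where the Dini condition \eqref{WDiniCond} of \thref{THM:suffwG} is near-critical, so that the conclusion genuinely witnesses the sharpness of that theorem. I do not foresee any substantial obstacle: the proof reduces to a compact formal manipulation of the three cited results, with all the technical substance absorbed into the already-established \thref{THM:SAsets} (existence via composition with inner functions of asymptotically small hyperbolic derivative) and \thref{THM:suffwG} (rigidity from $w$-entropy).
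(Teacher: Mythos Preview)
Your argument is correct, and it follows a genuinely different route from the paper's own proof. The paper does not invoke \thref{THM:suffwG} at all; instead it applies \thref{THM:SAsets} to the weighted space $\B_0(w)$ itself (this is where the hypothesis $\int_0^1 w^2(t)/t\,dt=\infty$ is actually used), obtains analytic polynomials $Q_n\to 0$ in $\B_0(w)$ and $Q_n\to 1$ uniformly on a compact $K$, and then uses the elementary growth bound $|f(z)|\lesssim \|f\|_{\B(w)}\,w(1-|z|)^{-(1+\delta)}$ (valid under $\int_0^1 w^{2+\delta}/t<\infty$) to deduce that the same $Q_n$ witness the SA-property for $K$ in the Khrushchev growth space $\G_{w^{1+\delta}}$. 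Khrushchev's theorem for $\G_W$ then rules out any almost-contained subset of finite $w$-entropy. Your approach is more self-contained: it stays entirely inside the Bloch framework, uses \thref{THM:SAsets} only for the constant majorant, and lets \thref{THM:suffwG} do the work of excluding $w$-entropy subsets via the rigidity--SA dichotomy of \thref{THM:CHAR1}. A pleasant by-product, which you noted, is that your argument never touches the hypothesis $\int_0^1 w^2(t)/t\,dt=\infty$; the paper's route genuinely consumes it. The paper's approach, on the other hand, buys a slightly stronger conclusion in passing (the set $K$ is an SA-set not only for $\B_0$ but for $\B_0(w)$ and for the growth space $\G_{w^{1+\delta}}$), and avoids having to quote the rather substantial \thref{THM:suffwG}. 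One small remark: for passing the SA-property from $E$ to the compact subset $K$ you may simply cite the restriction principle in \thref{LEM:SA-RIG} rather than going through \thref{THM:CHAR1}(iii).
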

\noindent
The clarify our main point, consider the majorant $w(t) := \log^{-c}(e/t)$ for some $0<c\leq 1/2$, and note that \thref{THM:suffwG} ensures that every subset of finite $w$-entropy is a set of rigidity for $\B_0$, while \thref{THM:SHARPENT} shows that there are sets almost containing no subset of finite $w$-entropy and satisfy the SA-property wrt $\B_0$. It is deemed unlikely that any entropy condition alone can characterize SA-sets for $\B_0$. On a related matter, it was recently established that the notation of entropy alone fails do describe cyclic vectors in the Bloch space, see \cite{limani2023mzinvariant}. We shall rephrase the entropy condition above in terms of a certain condition involving Hausdorff contents. To this end, $\beta$ be a non-decreasing continuous function on the unit-interval $[0,1]$ with $\beta(0)=0$. For a Lebesgue measurable set $E \subseteq \T$, we define the $\beta$-Hausdorff content of $E$ as the quantity
\[
\hil_\beta (E) := \inf \left\{ \sum_k \beta (|I_k|): \,
\{I_k\}_k \, \text{covering of open arcs of E} \right\}.
\]
Again, the $\beta$-Hausdorff content satisfies the usual properties of monotoncity, sub-additivity and one verifies in a straightforward manner that 
\[
\hil_{\beta}(I) = \beta(|I|)
\]
for any arc $I \subset \T$. A deep result of N. Makarov (see Theorem in \S 2, \cite{makarov1989class}) reveals a close relationship between the notion of entropy and $\beta$-Hausdorff content, asserting that a set $E \subset \T$ almost contains no subset $K$ of finite $w$-entropy if and only if 
\[
\hil_\beta(I \setminus E) = \hil_\beta(I)
\]
for any arc $I\subseteq \T$, where $\beta(t) = - t \log w(t)$. Sets $E$ satisfying the above condition are sometimes said to be \emph{$w$-sparse}. Using Makarov's description of $w$-sparse sets, we obtain the following feature of SA-sets in $\B_0$, as an immediate corollary of \thref{THM:suffwG}.  

\begin{cor} \thlabel{COR:Hwinv}
If $E$ satisfies the SA-property wrt $\B_0$, then $E$ is $w$-sparse for any majorant $w$ satisfying a Dini-condition in \eqref{WDiniCond}. That is, 
\[
\hil_\beta(I) = \hil_\beta(I \setminus E)
\]
for any arc $I\subseteq \T$, where $\beta(t) = - t \log w(t)$. Moreover, the Dini-condition cannot be improved in the sense of \thref{THM:SHARPENT}.
\end{cor}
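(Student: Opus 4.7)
The plan is to deduce the corollary as a direct formal consequence of Theorem \ref{THM:suffwG} together with Makarov's equivalence (cited in the paragraph preceding the corollary) between $w$-sparsity of $E$ and the absence in $E$ of subsets of positive Lebesgue measure with finite $w$-entropy. Fix a majorant $w$ and an exponent $\gamma>0$ for which $w^{\gamma}$ is a majorant obeying the Dini condition \eqref{WDiniCond}, and assume $E\subseteq\T$ has the SA-property with respect to $\B_0$. By Makarov's theorem, in order to conclude that $\hil_\beta(I\setminus E)=\hil_\beta(I)$ for every arc $I\subseteq\T$, it is enough to verify that $E$ almost contains no compact set of positive Lebesgue measure of finite $w$-entropy.

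I would establish this by contradiction. Suppose such a compact set $K$ exists, almost contained in $E$. The first observation is that the SA-property is hereditary with respect to measurable subsets: given any $f\in\B_0$ and $h\in L^\infty(K)$, extend $h$ by zero to an element $\widetilde h\in L^\infty(E)$, apply the SA-property for $E$ to obtain analytic polynomials $Q_n\to f$ in $\B_0$ with $Q_n\to\widetilde h$ in the weak-star topology of $L^\infty(E)$, and restrict the weak-star convergence to the subset $K$. This shows that $K$ itself is an SA-set for $\B_0$. On the other hand, the chosen $w$ makes Theorem \ref{THM:suffwG} directly applicable to $K$, which declares $K$ to be a set of rigidity for $\B_0$.

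To conclude, I would note that a set of positive Lebesgue measure cannot simultaneously be an SA-set and a set of rigidity for $\B_0$: the SA-property identifies $\Po_K\B_0$ with $\B_0\oplus L^\infty(K)$, while rigidity forces the coordinate projection from $\Po_K\B_0$ onto $\B_0$ to be injective; juxtaposing these collapses $L^\infty(K)$ to the trivial space and thereby forces $K$ to be Lebesgue-null, contradicting its positive measure. This establishes the main assertion of the corollary. The sharpness clause is then simply a reference to Theorem \ref{THM:SHARPENT}, which constructs, in the precise borderline regime where the Dini condition narrowly fails, compact SA-sets for $\B_0$ that are already $w$-sparse; this example blocks any attempt to upgrade Theorem \ref{THM:suffwG} and thereby the present corollary much further. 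The whole argument contains no substantive obstacle, as all the analytic content has been carried out in the previous results; the genuinely mild point to track is that the SA-property does indeed pass to measurable subsets in the weak-star formulation we are using.
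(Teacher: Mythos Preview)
Your argument is correct and matches the paper's intended route: the corollary is stated there as an immediate consequence of \thref{THM:suffwG} together with Makarov's characterization of $w$-sparse sets, and the ingredients you invoke (the restriction principle for SA-sets from \thref{LEM:SA-RIG}, and the incompatibility of the SA-property with rigidity on sets of positive measure) are exactly what is needed to make that deduction explicit. Your handling of the sharpness clause as a bare reference to \thref{THM:SHARPENT} is also in line with the paper.
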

Roughly speaking, \thref{COR:Hwinv} asserts that sets satisfying the SA-property wrt $\B_0$ must be negligible for Hausdorff contents wrt a range of measure-functions $\beta$ which tend to zero sufficiently fast.

\subsection{Organization and notations}

The manuscript is organized as follows. In Section \ref{SEC:3} we shall broaden our framework and rephrase the problems of asymptotic polynomial approximation by means of duality, with the principal intent of establishing \thref{THM:Bstruc} and parts of \thref{THM:CHAR1}. In fact, they will be deduced as consequences of our more general results in \thref{THM:AbsThom} and \thref{THM:KOprop}, respectively. Section \ref{SEC:4} is devoted to settling \thref{THM:SAsets} on the existence of sets of positive Lebesgue measure which satisfy the SA-property wrt $\B_0(w)$. In section \ref{SEC:5}, we prove our function theoretical characterizations of SA-sets in \thref{THM:CHAR1} and our description of removable sets for $W^1_a$ in \thref{THM:CHARREM}. Section \ref{SEC:6} contains the proof of \thref{THM:suffwG}, which involves the necessary condition for SA-sets in terms of entropy and Hausdorff content, and indicate its sharpness by proving \thref{THM:SHARPENT}. The principal applications of our developments are contained in \ref{SEC:7} and \ref{SEC:8}, where the former section is concerned with approximation in de Branges-Rovnyak spaces, while the later section revolves around Menshov universality of Taylor series.

Throughout these notes, we shall shall frequently use the notation $A \lesssim B$ to declare that two positive numbers $A,B$ satisfy the relation $A\leq cB$ for some positive constant $c>0$.  In case $A \lesssim B$ and $B \lesssim A$, we write $A\asymp B$.

\subsection*{Acknowledgements} I am in debt to Joaquim Bruna-Flores for bringing this problem to my attention and for encouraging me to pursue it with caution. Moreover, I thank Artur Nicolau and Oleg Ivrii for fruitful discussions during the preparation of this manuscript.

\section{A general model for asymptotic polynomial approximation} \label{SEC:3}
In this section, we shall conveniently phrase our results within a more general framework. In fact, we insist that maintaining a broader point of view will improve clarity in expositions.

\subsection{Holomorphic Banach spaces and Cauchy duals} Let $\text{Hol}(\D)$ denote the space of holomorphic of functions in $\D$ equipped with the topology of uniform convergence on compact subsets of $\D$. We shall restrict our attention to Banach spaces $X \hookrightarrow \text{Hol}(\D)$ of holomorphic functions in $\D$ satisfying the following properties:
\begin{enumerate}
    \item[(i)] The set of analytic polynomials is dense in $X$.
    \item[(ii)] The shift operator $M_zf(z) = zf(z)$ acts continuously on $X$.
    \item[(iii)] For every $\ell \in X^*$ the following asymptotic holds:
    \[
    \limsup_{n\to \infty} \, \abs{\ell(z^n)}^{1/n} \leq 1,
    \]
    where $X^*$ denotes the Banach space dual of $X$.
\end{enumerate}
Condition $(i)$ ensures that $X$ is a separable Banach space and condition $(ii)$ implies that $X$ is $M_z$-invariant. Meanwhile, condition $(iii)$ allows us to substitute the abstract Banach space dual $X^*$ with continuous linear functionals arising from a space of holomorphic functions in $\D$. Specifically, condition $(i)$ and $(iii)$ imply that any $\ell \in X^*$ gives rise to a uniquely defined holomorphic function $g_\ell$ on $\D$, given by the formula
\[
g_\ell(z) = \sum_{n\geq 0} \conj{\ell(\zeta^n)}z^n \qquad z\in \D,
\]
and we may define the corresponding space of holomorphic functions by $X'$, equipped with the inherited norm 
\[
\norm{g_\ell}_{X'} := \norm{\ell}_{X^*}.
\]
The formed Banach space $X'$ is called the \emph{Cauchy dual} of $X$, where the notion stems from the observation that any bounded linear functional $\ell \in X^*$ acting on analytic polynomials $p$ can be expressed as
\[
\ell(p) = \lim_{r \to 1-} \int_{\T} p(r\zeta) \conj{g_\ell(r\zeta)} dm(\zeta).
\]
In other words, every bounded linear functional $\ell \in X^*$ may be regarded as the corresponding holomorphic function $g_\ell \in X'$, considered in the above Cauchy pairing. We shall refer to Banach spaces $X$ satisfying the conditions $(i)-(iii)$ as \emph{holomorphic Banach spaces}. There are numerous examples of holomorphic Banach spaces, such as the classical Bergman spaces, but the principal examples of our consideration will be the weighted Bloch space $\B_0(w)$, as we shall clarify in the next subsection.


\subsection{Weighted Bloch spaces and their duals}
It is straightforward to verify that the weighted Bloch spaces $\B_0(w)$ satisfy the assumptions $(i)$ and $(ii)$, and one easily checks that the $n$-th moment satisfies
\[
\limsup_{n\to \infty} \norm{z^n}^{1/n}_{\B(w)} \leq 1,
\]
which implies that $(iii)$ also holds. Since these spaces play a crucial role in our work, we shall need a more explicit description of their Cauchy duals. Here we use the relation $X \cong Y$ between that two Banach spaces $X, Y$ to designate that they are isomorphic with equivalent norms. 

\begin{lemma}\thlabel{LEM:BlochwDual} Let $w$ be a majorant and let $W^1_a(w)$ denote the Banach space of holomorphic functions $f$ in $\D$ equipped with the norm 
\[
\norm{f}_{W^1_a(w)} := \abs{f(0)} + \int_{\D} \abs{f'(z)} w(1-|z|)dA(z).
\]
Then the following relationships on Cauchy duality hold: $\B_0(w)' \cong W^1_a(w)$ and $W^1_a(w)' \cong \B(w)$.
 
\end{lemma}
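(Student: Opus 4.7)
The plan is to establish both dualities by the standard scheme of isometric embedding, Hahn--Banach, and reconstruction of the dual function via a Cauchy-type integral. The one analytic input doing all the work is a Forelli--Rudin estimate tailored to the majorant $w$.

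For the forward direction in both claims, I would introduce the natural derivative pairing
\[
\langle f , g \rangle := f(0)\overline{g(0)} + 2\int_\D f'(z)\overline{g'(z)}(1-|z|^2)\, dA(z).
\]
In either direction of the pairing, the estimate
\[
\abs{\langle f,g\rangle} \leq \abs{f(0)}\abs{g(0)} + 2 \sup_{z\in \D} \frac{(1-|z|)\abs{f'(z)}}{w(1-|z|)} \int_\D \abs{g'(z)} w(1-|z|)(1+|z|)\, dA(z)
\]
(or its symmetric counterpart) gives boundedness by $\|f\|_{\B(w)}\|g\|_{W^1_a(w)}$, so each $g$ in the proposed dual induces a bounded functional on the other space.

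For the converse of $W^1_a(w)' \cong \B(w)$, I would embed $W^1_a(w)$ isometrically as a closed subspace of $\C \oplus L^1(\D, w(1-|z|)dA)$ via $f \mapsto (f(0), f')$. By Hahn--Banach and $(L^1)^*=L^\infty$, any $\ell \in W^1_a(w)^*$ takes the form
\[
\ell(f) = c\, f(0) + \int_\D f'(z) h(z)\, w(1-|z|)\, dA(z), \qquad h \in L^\infty(\D),
\]
with norm control $|c|+\norm{h}_\infty \lesssim \norm{\ell}$. Computing $\ell(\zeta^n)$, summing the Taylor series of $g_\ell$, and swapping sum and integral (justified by dominated convergence on compact subsets) yields
\[
g_\ell(z) = \overline{c} + z\int_\D \frac{\overline{h(\zeta)}\, w(1-|\zeta|)}{(1-\overline{\zeta}z)^2}\, dA(\zeta).
\]
Differentiating under the integral, bounding $|1+\overline{\zeta}z| \leq 2$, and invoking the Forelli--Rudin-type estimate
\[
\int_\D \frac{w(1-|\zeta|)}{|1-\overline{\zeta}z|^3}\, dA(\zeta) \asymp \frac{w(1-|z|)}{1-|z|}
\]
gives $\frac{1-|z|}{w(1-|z|)}|g_\ell'(z)| \lesssim \norm{\ell}$, hence $g_\ell \in \B(w)$ with equivalent norms. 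For the converse of $\B_0(w)' \cong W^1_a(w)$, the embedding is into $\C \oplus C_0(\D)$ via $f\mapsto(f(0), \frac{1-|z|}{w(1-|z|)}f'(z))$, where the little-Bloch condition places the second coordinate in $C_0(\D)$. Hahn--Banach and Riesz representation produce a complex Borel measure $\mu$ on $\D$ representing $\ell$; running the same moment computation yields
\[
g_\ell(z) = \overline{c} + \int_\D \frac{1-|\zeta|}{w(1-|\zeta|)}\cdot \frac{z}{(1-\overline{\zeta}z)^2}\, d\overline{\mu}(\zeta),
\]
and after differentiating, an application of Fubini together with the same Forelli--Rudin estimate gives
\[
\int_\D \abs{g_\ell'(z)} w(1-|z|)\, dA(z) \lesssim \int_\D \frac{1-|\zeta|}{w(1-|\zeta|)}\cdot \frac{w(1-|\zeta|)}{1-|\zeta|}\, d|\mu|(\zeta) = |\mu|(\D) \lesssim \norm{\ell}.
\]

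The main obstacle is the Forelli--Rudin estimate above for a general majorant. The lower bound is immediate by testing at $\zeta$ close to $z$. For the upper bound, I would decompose the integral dyadically over annuli $1-|\zeta| \asymp 2^{-k}$, where $w(1-|\zeta|) \asymp w(2^{-k})$, use the classical estimate $\int_\D |1-\overline{\zeta}z|^{-s}\,dA(\zeta) \asymp (1-|z|)^{2-s}$ on each annulus (restricted via a standard Carleson-box argument), and sum. The majorant hypothesis that $w(t)/t^\alpha$ increases for some $0<\alpha<1$ is precisely what makes the resulting series summable in $k$ and comparable to $w(1-|z|)/(1-|z|)$. A final remark: the derivative pairing used here agrees with the Cauchy pairing defined in the preceding subsection only up to a bounded diagonal multiplier $n/(2n+1)$ on Taylor coefficients, which is an isomorphism on each of $\B_0(w)$ and $W^1_a(w)$, so the Cauchy dual identifications claimed in the lemma follow as isomorphic (not isometric) identifications with equivalent norms.
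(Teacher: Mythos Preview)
Your argument is correct and complete; both dualities are established, and the Forelli--Rudin estimate you isolate is indeed the analytic heart of the matter (your sketch of its proof via the majorant condition $w(t)/t^\alpha \nearrow \infty$ is right; this is exactly what makes $\int_{1-|z|}^1 w(s)s^{-2}\,ds \lesssim w(1-|z|)/(1-|z|)$ work). One small slip: the diagonal multiplier relating your derivative pairing with weight $(1-|z|^2)$ to the Cauchy pairing is $2n/(n+1)$ rather than $n/(2n+1)$, but either way it is bounded above and below, so the conclusion stands.

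The paper proceeds differently in two respects. For the converse inclusion $W^1_a(w)' \subseteq \B(w)$, instead of the Hahn--Banach extension to $\C\oplus L^1$, the paper tests the functional $\ell_g$ directly on the reproducing-kernel family $f_\lambda(\zeta)=(1-\overline{\lambda}\zeta)^{-2}$, so that the Cauchy formula yields $|g'(\lambda)|=|\ell_g(f_\lambda)| \leq C\|f_\lambda\|_{W^1_a(w)}$; the computation of $\|f_\lambda\|_{W^1_a(w)}$ is then precisely your Forelli--Rudin integral. This avoids the abstract extension step but arrives at the same estimate. For the converse inclusion $\B_0(w)' \subseteq W^1_a(w)$, the paper takes a shortcut: rather than your direct embedding into $\C\oplus C_0(\D)$ and the Riesz representation, it invokes a general result of Perfekt that $\B(w)$ has a unique isometric predual, and concludes from the already established $W^1_a(w)'\cong\B(w)$. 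Your route is more self-contained and makes the Forelli--Rudin estimate do double duty; the paper's is shorter but leans on an external reference.
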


\begin{proof}
    Note that a straightforward application of the Littlewood-Paley formula gives 
    \[
    \int_{\T}f(r\zeta) \conj{g(r\zeta)} dm(\zeta) = f(0) \conj{g(0)} + r^2 \int_{\D} f'(rz) \conj{g'(rz)} \log \frac{1}{|z|} dA(z), \qquad 0<r<1
    \]
    whenever $f, g$ are holomorphic functions in $\D$. It follows easily from this formula and some standard estimates that every $g \in \B(w)$ induces a bounded linear functional on $W^1_a(w)$ and that every $f \in W^1_a(w)$ induces a bounded linear functional on $\B_0(w)$. To show the converse, we primarily note that a straightforward argument involving dilatation shows that the analytic polynomials are dense in $W^1_a(w)$. The next step is to verify that if $g$ is holomorphic in $\D$ for which 
    \[
    \ell_g(f) := \lim_{r\to 1-} \int_{\T} f(r\zeta) \conj{g(r\zeta)} dm(\zeta), \qquad f\in W^1_a(w)
    \]
    induces a bounded linear functional on $W^1_a(w)$, then $g$ must necessarily belong to $\B(w)$. To this end, we apply $\ell_g$ to the rational functions $f_{\lambda}(\zeta) = (1-\conj{\lambda}\zeta)^2$ with $\lambda \in \D$ fixed and utilize the Cauchy integral formula to get
    \[
    \abs{g'(\lambda)} = \lim_{r\to 1-} \abs{\int_{\T}\frac{g(r\zeta)}{(1-\conj{r\zeta}\lambda)^2} dm(\zeta) } = \abs{\ell_g(f_\lambda)} \leq C \norm{f_{\lambda}}_{W^1_a(w)}.
    \]
    Here $C>0$ is a constant, which arises from the assumption $\ell_g \in W^1_a(w)'$ and is independent of $f$. It remains to compute the $W^1_a(w)$-norm of $f_{\lambda}$, which by polar coordinates takes the form 
    \[
    \norm{f_{\lambda}}_{W^1_a(w)} = 1 + \int_{0}^1  \int_{\T} \frac{dm(\zeta)}{\abs{\zeta - r\lambda}^3}  w(1-r) rdr.
    \]
    Now the integral on $\T$ is asymptotically equal to $(1-r\abs{\lambda})^{-2}$ (cf. Chap. I, \cite{hedenmalmbergmanspaces}). Using this in conjunction with a change of variable, and the assumption that $w$ is a majorant, one gets
    \[
    \int_{0}^1  \int_{\T} \frac{dm(\zeta)}{\abs{\zeta - r\lambda}^3}  w(1-r) rdr \lesssim \int_{0}^1 \frac{w(1-r)}{(1-r|\lambda|)^2} dr \leq \int_{1-\abs{\lambda}}^1 \frac{w(t)}{t^2} dt \lesssim \frac{w(1-|\lambda|)}{1-|\lambda|}.
    \]
    We therefore conclude that $g \in \B(w)$, hence $W^1_a(w) ' \cong \B(w)$. Now it is a general result that $\B_0(w)'$ is the unique pre-dual of $\B(w)$ (cf. Corollary 1.3 in \cite{perfekt2017m}), hence using the previously established relation, we conclude that $\B_0(w)' \cong W^1_a(w)$.
\end{proof}

The next results reveals the intimate link between the space special atomic spaces $\mathscr{B}_w(\T)$, previously defined in Section 2.2, to the weighted Sobolev spaces $W^1_a(w)$. 

\begin{thm}\thlabel{THM:BWchar}[Theorem 9.1 in \cite{bloom1989atomic}] Let $w$ be a majorant. Then for any function $F\in W^1_a(w)$ the corresponding limit
\[
\lim_{r\to 1-} \Re F(r\zeta) =: f(\zeta) \qquad \zeta \in \T
\]
exists $dm$-a.e and defines an element in $\mathscr{B}_w(\T)$. Conversely, for any $f \in \mathscr{B}_w(\T)$ the corresponding Cauchy projection 
\[
C(f)(z) := \int_{\T} \frac{f(\zeta)}{1- \conj{\zeta}z} dm(\zeta), \qquad z\in \D
\]
belongs to $W^1_a(w)$.
\end{thm}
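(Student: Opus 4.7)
My approach would split the statement into its two directions and treat the easier one first: the Cauchy-projection assertion (backward), then the boundary-value extraction (forward).

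For the backward direction, let $f = \sum_n c_n b_n \in \mathscr{B}_w(\T)$. Since $f \mapsto C(f)$ is linear, it suffices to show that $\|C(b_n)\|_{W^1_a(w)} \leq M$ for some universal constant independent of the atom. The constant atom $b_0 \equiv 1$ is mapped to the constant function $1$, so is trivial. For a non-trivial atom $b_n$ supported on an arc $I_n$ of length $\delta$ centered at $\zeta_n$, differentiation under the integral yields
\[
(C(b_n))'(z) = \int_{\T} \frac{\conj{\zeta}\, b_n(\zeta)}{(1-\conj{\zeta}z)^2}\, dm(\zeta).
\]
The cancellation properties of $b_n$ are key: it has mean zero and is antisymmetric around $\zeta_n$. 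I would split $\D$ into a near region $N = \{z : \dist{z}{\zeta_n} \lesssim \delta\}$ and a far region $\D \setminus N$. On the far region, the vanishing of both the zeroth and first angular moments of $b_n$ permits a second-order Taylor expansion of the kernel about $\zeta_n$, yielding
\[
|(C(b_n))'(z)| \lesssim \frac{\delta^2}{w(\delta)\,|1-\conj{\zeta_n}z|^4}.
\]
Integrating against $w(1-|z|)\, dA(z)$ in polar coordinates about $\zeta_n$, and splitting the angle-radial domain into $\rho \leq \theta$ and $\rho \geq \theta$, the monotonicity of $w(t)/t^\alpha$ forces the integral to converge to a bound independent of $\delta$. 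On $N$, a direct size estimate using $\|b_n\|_1 = 1/w(\delta)$ together with the area bound $|N| \asymp \delta^2$ closes the argument.

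For the forward direction, I would first establish that $F \in W^1_a(w)$ admits non-tangential boundary values $dm$-almost everywhere. The integrability $|F'(z)|w(1-|z|) \in L^1(dA)$ together with the majorant property forces $F$ into a Hardy- or Nevanlinna-type class, after which the classical Fatou/Plessner theorem delivers the radial limit $f(\zeta)$. To produce the atomic decomposition of $f = \Re F|_{\T}$, I would perform a Whitney dyadic decomposition of $\D$ into boxes $Q_{k,j}$ whose bases are dyadic arcs $I_{k,j} \subset \T$ of length $2^{-k}$, and use the Littlewood--Paley identity
\[
F(\zeta) = F(0) + \int_0^1 \zeta F'(r\zeta)\, dr
\]
to localize the boundary value as a sum of pieces supported on each $I_{k,j}$. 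Each localized piece is regrouped into a multiple $c_{k,j} b_{k,j}$ of a special atom on $I_{k,j}$ by absorbing the antisymmetric normalization into the coefficient $c_{k,j}$, which one may take proportional to the localized $W^1_a(w)$-mass $\int_{Q_{k,j}} |F'(z)| w(1-|z|)\, dA(z)$. Summing over $(k,j)$ telescopes to $\sum_{k,j}|c_{k,j}| \lesssim \|F\|_{W^1_a(w)}$.

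The principal technical obstacle lies in the backward direction on the near region: in the Carleson box over $I_n$, the kernel $(1-\conj{\zeta}z)^{-2}$ has a near-singularity, and one must balance the coarseness of the direct pointwise estimate against the vanishing moments of the atom, carefully exploiting that $w(t)/t^\alpha \nearrow \infty$ as $t \to 0^+$ to ensure uniform convergence of the arising integrals. The forward direction is technically lighter but requires care in choosing the localization scale so that each atomic piece has exactly the antisymmetric shape required by the definition of $\mathscr{B}_w(\T)$.
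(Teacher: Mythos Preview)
The paper does not supply a proof of this statement at all: it is quoted verbatim as Theorem~9.1 of Bloom--de~Souza \cite{bloom1989atomic} and used as a black box. There is therefore nothing in the paper to compare your argument against.

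That said, your outline is broadly the standard route to results of this type, and the backward direction (uniform $W^1_a(w)$-bounds on atoms via a near/far split exploiting the two vanishing moments of $b_n$) is sound as a strategy. The forward direction, however, is where your sketch is thinnest. Two points deserve attention. First, the claim that membership in $W^1_a(w)$ forces $F$ into a Hardy- or Nevanlinna-type class is not automatic from the hypotheses as stated (for $w\equiv 1$ one only has $F'\in L^1(dA)$), and the existence of radial boundary values genuinely requires an argument---this is part of what the theorem asserts, not a free consequence of Fatou. Second, and more seriously, your Whitney localization produces pieces that are \emph{not} of the rigid antisymmetric step-function shape $(1_{I^+}-1_{I^-})/(|I|w(|I|))$ demanded by the definition of $\mathscr{B}_w(\T)$; the phrase ``regrouped into a multiple $c_{k,j}b_{k,j}$ of a special atom'' hides the real work. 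One typically needs a Haar-type martingale expansion (or an equivalent averaging device) to project the localized boundary data onto genuine atoms while controlling the $\ell^1$-norm of the coefficients by the $W^1_a(w)$-mass. Without specifying that mechanism, the forward half remains a plan rather than a proof.
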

It now readily follows from \thref{LEM:BlochwDual} and \thref{THM:BWchar} that the weighted Bloch spaces satisfy property $(iv)$, hence they are holomorphic Banach spaces. We now clarify the role of the Besov space $B^1(\T)$ in our work. Recall that $B^1(\T)$ consists of $g\in L^1(\T)$ satisfying the condition
\begin{equation*}
\norm{g}_{B^1} = \int_0^1 \int_{\T} \abs{G(\zeta e^{it}) +G(\zeta e^{-it})  - 2G(\zeta)} dm(\zeta) \frac{dt}{t^2} < \infty,
\end{equation*}
where $G(e^{it})= \int_0^t g(e^{is}) ds$ is a primitive of $g$. The following results is essentially gives a Sobolev trace theorem for $W^1_a(\D)$.

\begin{thm}[See Theorem A in \cite{holland1988growth}] \thlabel{PROP:W11B1} 
The Cauchy transform is a continuous surjective linear operator from the Besov space $B^1$ onto the analytic Sobolev $W^1_a(\D)$. That is, we have
\[
C (B^1 ) = W^1_a(\D).
\]
Furthermore, $W^1_a(\D)$ consists of holomorphic functions $g$ in $\D$ such that $g^*(\zeta) :=\lim_{r\to 1-} g(r\zeta)$ defines an element in $B^1$.
\end{thm}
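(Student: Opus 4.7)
The plan is to reduce the statement to a Littlewood--Paley-type equivalence between the Bergman-type norm $\int_\D \abs{f'}dA$ and a second-difference Besov norm on the boundary. Specifically, the aim is to prove that for $f$ holomorphic in $\D$ with radial boundary values $f^* \in L^1(\T)$,
\[
\int_\D \abs{f'(z)}\, dA(z) \asymp \norm{f^*}_{B^1(\T)},
\]
where the right-hand side is, by definition, the second-difference $L^1$-norm of the primitive of $f^*$. Granting this norm equivalence, surjectivity $C(B^1)=W^1_a(\D)$ follows from the Cauchy reproducing formula $f=C(f^*)$, valid for $f\in H^1(\D)$, together with the standard inclusion $W^1_a(\D)\hookrightarrow H^1(\D)$.

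A first step is to verify that every $f \in W^1_a(\D)$ admits radial boundary values $dm$-a.e.\ on $\T$. This follows from Fubini applied to $\int_\D \abs{f'}dA<\infty$, which yields $\int_0^1 \abs{f'(r\zeta)}dr<\infty$ for $dm$-a.e.\ $\zeta$, so that $f(r\zeta)$ is Cauchy as $r\to 1^-$ on a set of full measure, giving $f^*\in L^1(\T)$. For the $\lesssim$ direction of the norm equivalence, the approach is to approximate $f$ by the smooth dilates $f_r(z)=f(rz)$, use a second-order Taylor expansion in the angular variable to rewrite $\Delta_2(F_r^*,t)(\zeta)$ as a double integral of $f'$ over an interior angular region, and then exchange the order of integration in $(\zeta,t)$ with the interior integration via Fubini. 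The elementary kernel estimate $\int_s^{\pi} (t-s)\, t^{-2}\, dt \lesssim \log(\pi/s)$, combined with the $dt/t^2$ weight, collapses the boundary double integral to $\int_\D \abs{f'(z)}dA(z)$, and one passes to the limit $r\to 1^-$ by monotonicity.

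For the converse $\gtrsim$ direction, the plan is to start from the Cauchy representation $f'(z)=\int_\T \bar\zeta f^*(\zeta)(1-\bar\zeta z)^{-2} dm(\zeta)$ and integrate by parts twice against the primitive $F^*$ of $f^*$; after a symmetrization $\zeta\mapsto \zeta e^{\pm is}$ this rewrites $f'(z)$ as an integral of $\Delta_2(F^*,s)(\zeta)$ against an explicit two-parameter kernel, and a direct computation of the radial integral of that kernel over $\D$ yields the desired bound. The principal obstacle lies in this converse direction: extracting second-difference cancellation from a single-layer Cauchy kernel requires a careful symmetrization of the boundary variable together with precise tracking of the $(1-\abs{z})$-weights in the resulting kernel estimate. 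This is precisely the content of \cite{holland1988growth}, to which we refer for the complete technical details.
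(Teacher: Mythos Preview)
The paper does not supply its own proof of this theorem: it is stated as a citation to Theorem~A of \cite{holland1988growth} and used as a black box thereafter. Consequently there is no argument in the paper to compare your proposal against.

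Your outline is a reasonable summary of the Holland--Walsh approach (radial limits via Fubini, Littlewood--Paley-type norm equivalence between $\int_\D |f'|\,dA$ and the second-difference Besov seminorm, and Cauchy reproduction to conclude surjectivity). Since you yourself defer the hard direction back to \cite{holland1988growth}, your proposal is, like the paper, ultimately a pointer to that reference rather than a self-contained proof; as such it is appropriate, though you should be aware it does not add independent content beyond what the paper already cites.
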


An immediate consequence of \thref{PROP:W11B1} and \thref{THM:BWchar} is that the spaces $B^1(\T)$ and $\mathscr{B}_1(\T)$ are isomorphic as Banach spaces, with equivalent norms, that is
\[
B^1(\T) \cong \mathscr{B}_1(\T).
\]
In fact, one may actually view $B^1(\T)$ as the space of absolutely summable Haar-type expansions of elements in $\T$.

%
%
\subsection{Asymptotic polynomial approximation in holomorphic Banach spaces} 
Here we phrase the relevant notions of asymptotic polynomial approximation in the general setting of holomorphic Banach spaces $X$. Given a Lebesgue measurable set $E \subseteq \T$ of positive Lebesgue measure, we form the space $X \oplus L^\infty(E)$, naturally equipped with the inheriting norm-topology from $X$ and weak-star topology from $L^\infty(E)$. The dual space of $X \oplus L^\infty(E)$ can be identified with $X' \oplus L^1(E)$ in the pairing 
\[
\lim_{r \to 1-} \int_{\T} F(r\zeta) \conj {G(r\zeta)} dm(\zeta) + \int_E f(\zeta) \conj{g(\zeta)} dm(\zeta),
\]
with $(F,f) \in X \oplus L^\infty(E)$ and $(G,g) \in X' \oplus L^1(E)$. The closure of the $(Q,Q)$ in $X \oplus L^\infty(E)$, where $Q$ ranges through the set of analytic polynomials, is denoted by $\Po_E(X)$. Below we highlight and comment on the different notions of asymptotic polynomial approximation.
\begin{definition}\thlabel{DEF:SAX}
    A Lebesgue measurable subset $E \subseteq \T$ is said to satisfy the simultaneous approximation property (SA-property) wrt $X$ if 
    \[
    \Po_E(X) = X \oplus L^\infty(E).
    \]
\end{definition}
\noindent
A simple and useful observation (c.f. Theorem 1.3,  \cite{khrushchev1978problem}) is that a subset $E$ satisfies the SA-property wrt $X$ if and only if the tuple $(0,1)$ belongs to $\Po_E(X)$. In order for a set $E \subseteq \T$ to satisfy the SA-property wrt $X$, the set of analytic polynomials must necessarily be weak-star dense in the space $L^\infty(E)$, which by a straightforward argument involving the F. and M. Riesz Theorem implies that they cannot have full Lebesgue measure on $\T$. For the sake of brevity, we sometimes simply say that $E$ is an SA-set for $X$ and that it is a non-trivial if $E$ has positive Lebesgue measure. We now recall the concept which should be thought of as the opposite counterpart to simultaneous approximation.  

\begin{definition}\thlabel{DEF:IRRX} A Lebesgue measurable set $E \subset \T$ is declared to be a set of rigidity for $X$ if the projection $\Pi : X \oplus L^\infty(E) \to X$, restricted to the subspace $\Po_E(X)$ is injective.
 \end{definition} 
 In other words, $E$ is a set of rigidity if whenever $(Q_n)_n$ is a sequence of analytic polynomials which converge to $0$ in $X$ and converge to $f$ uniformly on $E$, then $f=0$. In that case, the corresponding space $\Po_E(X)$ is said to be irreducible. Note that any set $E\subseteq \T$ of full Lebesgue measure in $\T$ is always a set of rigidity for any holomorphic Banach space $X$. Indeed, any convergent sequence of analytic polynomials in $L^\infty(E)$ in fact converges weak-star in $H^\infty(\D)$ to a holomorphic function $f$ on $\D$, which together with the assumption $X \hookrightarrow \text{Hol}(\D)$ forces $E$ to be a set of rigidity for $X$. 

\subsection{Removable singularities for Cauchy transforms}

Now shall now recapture the notions of asymptotic polynomial approximation by means of duality, which turns out to be related to a certain one-sided problem of removable singularities for Cauchy transforms. The following dual reformulation for the SA-property is crucial in the work of S. Khrushchev in \cite{khrushchev1978problem}. As it will also play an important role for our further developments, we phrase it below and refer to it as Khrushchev's lemma.

\begin{thm}[Khrushchev's Lemma] \thlabel{SAduality} Let $X$ be a holomorphic Banach space and $E \subset \T$ be a Lebesgue measurable set of positive Lebesgue measure. Then $E$ satisfies the SA-property with respect to $X$ if and only if there exists no $h \in L^1(E)$ such that its Cauchy transform
\[
C(h)(z) := \int_E \frac{ h(\zeta) }{1-\conj{\zeta}z}dm(\zeta), \qquad z\in \D
\]
is a non-trivial element in $X'$.

\end{thm}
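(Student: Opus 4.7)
The plan is to establish the equivalence by Hahn--Banach duality, exploiting the description of the predual pairing for $X \oplus L^\infty(E)$ already provided immediately after the definition of $\Po_E(X)$. Since $\Po_E(X)$ is, by construction, the $(\text{norm},\text{weak-}*)$-closure of the polynomial diagonal in $X \oplus L^\infty(E)$, it is a proper subspace if and only if there exists a nonzero continuous linear functional that annihilates it. Because the dual of $X \oplus L^\infty(E)$ (with the product topology used to define $\Po_E(X)$) is precisely $X' \oplus L^1(E)$ under the pairing recalled earlier in the section, the problem is to classify pairs $(G,g) \in X' \oplus L^1(E)$ satisfying, for every analytic polynomial $Q$,
\[
\lim_{r\to 1-} \int_{\T} Q(r\zeta)\,\conj{G(r\zeta)}\,dm(\zeta) \; + \; \int_E Q(\zeta)\,\conj{g(\zeta)}\,dm(\zeta) \; = \; 0.
\]

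The central step is to translate this annihilation condition into a Cauchy-transform identity. Testing on the monomials $Q(\zeta)=\zeta^n$ for $n\ge 0$, the first term collapses to $\conj{\widehat{G}(n)}$ via the Cauchy-dual formalism, and the second becomes $\int_E \zeta^n \conj{g(\zeta)}\,dm(\zeta)$. Taking conjugates, the system becomes $\widehat{G}(n) = -\int_E \conj{\zeta}^n g(\zeta)\,dm(\zeta)$ for all $n\ge 0$. Setting $h := -g \in L^1(E)$ and expanding $(1-\conj{\zeta}z)^{-1}$ as a geometric series inside $\D$, this identity is exactly the statement that the Taylor coefficients of $G$ at the origin coincide with those of $C(h)$, hence $G \equiv C(h)$ as elements of $\mathrm{Hol}(\D)$, and consequently $C(h) \in X'$.

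This correspondence is bijective. Conversely, starting from any $h \in L^1(E)$ with $C(h) \in X'$, the pair $(G,g) := (C(h),-h)$ satisfies the annihilation identity by the same computation carried out in the opposite direction. Thus nontrivial annihilators of $\Po_E(X)$ in $X' \oplus L^1(E)$ are in one-to-one correspondence with nontrivial $h \in L^1(E)$ whose Cauchy transform $C(h)$ belongs to $X'$. In particular, $\Po_E(X) = X \oplus L^\infty(E)$, i.e.\ $E$ enjoys the SA-property with respect to $X$, precisely when there is no such nontrivial $h$.

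I do not anticipate a serious obstacle here: the argument is essentially a bookkeeping exercise once the predual of $X \oplus L^\infty(E)$ and the Cauchy-dual description of $X'$ (property (iii) of a holomorphic Banach space) are in place. The only mild point to check is that the formal power series with coefficients $\int_E \conj{\zeta}^n h(\zeta)\,dm(\zeta)$ is genuinely the holomorphic function $C(h)$ in $\D$, which is immediate by Fubini and the geometric series expansion since $|z|<1$ and $h \in L^1(E)$. Everything else is a direct application of Hahn--Banach combined with the identification of the dual pairing.
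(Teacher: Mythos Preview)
Your proposal is correct and follows precisely the duality approach the paper indicates (the paper does not spell out a proof, it simply refers to Khrushchev's Theorem~1.2 and notes that replacing uniform convergence on $E$ by weak-star convergence in $L^\infty(E)$ produces an $L^1$-density rather than a Borel measure). One small point to tidy up: you conclude that SA fails iff there is a \emph{nontrivial $h$} with $C(h)\in X'$, whereas the statement asks for $C(h)$ itself to be nontrivial; these coincide because an annihilator of the form $(0,g)$ with $g\neq 0$ would force $\conj{g}\,1_E\in H^1_0$, which is ruled out by the F.\ and M.\ Riesz theorem when $|E|<1$, and when $|E|=1$ both conditions fail trivially (SA is impossible and $h\equiv 1$ already gives $C(h)=1$).
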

\noindent 
The proof involves a neat duality argument, similar to Theorem 1.2 in \cite{khrushchev1978problem}, where it is proved for compact sets $E$. The difference there is that simultaneous approximation is defined by requiring uniform convergence on $E$, hence in the space $C(E)$ of continuous functions on $E$, which leads the author to retrieve a Borel measure $h$ supported on $E$ with $C(h) \in X'$. However, since we only require weak-star convergence in $L^\infty(E)$, we will retrieve an integral functions supported in $E$.

Let $C(L^1)$ denotes the Banach space of holomorphic functions $f$ in $\D$, expressible as the Cauchy transform of a elements in $L^1(\T)$, equipped with the inherited norm
\[
\norm{f}_{C(L^1)} := \inf \left\{ \norm{h}_{L^1}: f=C(h) \right\}.
\]
See \cite{cauchytransform} for further details on Cauchy transforms. Our next Theorem provides the corresponding dual reformulation for sets of rigidity, analogously to Khrushchev's lemma on SA-sets.

\begin{thm}\thlabel{THM:Irred} Let $X$ be a holomorphic Banach space and $A\subseteq \T$ be a Lebesgue measurable set with $0<\abs{A}<1$. Then $A$ is a set of rigidity for $X$, if and only if the linear manifold
\[
   \mathcal{L}_A(X') := \left\{ h\in L^1(A): h \neq 0 \, \text{dm-a.e on A}, \, \, \, C(h) \in X' \right\}
\]
is non-empty. Furthermore, the linear manifold 
    \[
  \mathcal{K}_A(X') := \left\{C(h) \in X': h \in L^1(A) \right\}
    \]
forms a dense subset of $C(L^1)$, whenever $A$ is a set of rigidity for $X$.
\end{thm}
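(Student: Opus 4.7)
The plan is to argue by duality: first compute the annihilator of $\Po_A(X)$ inside the dual $X' \oplus L^1(A)$. Testing vanishing on the polynomials $Q(\zeta) = \zeta^n$, the identification of $X^*$ with $X'$ via the Cauchy pairing (condition (iii)) and direct Fourier-coefficient matching identify this annihilator with the collection of pairs $(-C(g), g)$ where $g$ ranges over $N := \{g \in L^1(A) : C(g) \in X'\}$. The bipolar theorem then delivers the key reformulation: for $f \in L^\infty(A)$, $(0,f) \in \Po_A(X)$ if and only if $\int_A f \conj{g}\,dm = 0$ for every $g \in N$, so $A$ is a set of rigidity for $X$ precisely when $N$ is norm-dense in $L^1(A)$.

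Assume $\mathcal{L}_A(X') \neq \emptyset$ and fix $h_0 \in \mathcal{L}_A(X')$. Condition (ii) makes $M_z$ continuous on $X$, whose adjoint on $X'$ is the backward shift $G \mapsto (G-G(0))/z$; a direct Fourier computation shows that $C(\conj{\zeta}^k h_0)$ is exactly the $k$-fold backward shift of $C(h_0)$, so every $\conj{\zeta}^k h_0$ belongs to $N$. If $(0,f) \in \Po_A(X)$, the characterisation above forces $\int_A f \conj{h_0}\, \zeta^k\,dm = 0$ for all $k \geq 0$, so the function $\tilde f := f \conj{h_0}$ (already supported in $A$) has all non-positive Fourier coefficients zero and is therefore the boundary values of an $H^1$ function in $\D$. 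Since $\abs{A}<1$, $\tilde f$ vanishes on a set of positive measure, and the log-integrability of $H^1$ functions (or F.\ and M.\ Riesz) forces $\tilde f \equiv 0$; combined with $h_0 \neq 0$ almost everywhere on $A$, this yields $f = 0$.

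For the converse, density of $N$ in $L^1(A)$ produces $h_n \in N$ with $h_n \to 1_A$ in $L^1$ and, after extraction, almost everywhere on $A$; in particular at least one $h_n(\zeta)$ is non-zero for a.e.\ $\zeta \in A$. I would then randomise: pick $c_n > 0$ with $\sum c_n (\norm{h_n}_{L^1} + \norm{C(h_n)}_{X'}) < \infty$, take $\{\xi_n\}$ to be i.i.d.\ standard Gaussians, and set $H := \sum_n c_n \xi_n h_n$. Fubini yields almost-sure convergence of $H$ in $L^1(A)$ and of $C(H) = \sum_n c_n \xi_n C(h_n)$ in $X'$, so $H \in N$ almost surely. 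For each $\zeta \in A$ with some $h_n(\zeta) \neq 0$, the scalar Gaussian $H(\zeta)$ has positive variance and vanishes with probability zero, so one more Fubini argument makes $\{H = 0\} \cap A$ almost surely null. A generic realisation thus lies in $\mathcal{L}_A(X')$. I expect this step to be the main obstacle: the density of $N$ cannot in general be promoted to a single a.e.\ non-vanishing element by a deterministic countable linear combination, and randomisation is what allows a generic realisation to simultaneously avoid every potential cancellation set.

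For the density of $\mathcal{K}_A(X')$ in $C(L^1)$, I would note that any $L^\infty$-annihilator of $L^1(A) + \conj{H^1_0}$ must vanish on $A$ and belong to $H^\infty$, hence must vanish everywhere by F.\ and M.\ Riesz. Consequently $L^1(A) + \conj{H^1_0}$ is dense in $L^1(\T)$. Since the Cauchy projection $C \colon L^1(\T) \to C(L^1)$ is a contractive surjection with kernel $\conj{H^1_0}$, this density pushes forward to density of $C(L^1(A))$ in $C(L^1)$; coupling with the $L^1(A)$-density of $N$ under rigidity (from the first paragraph) gives density of $\mathcal{K}_A(X') = C(N)$ in $C(L^1)$.
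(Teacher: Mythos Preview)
Your argument is correct, and the forward direction ($\mathcal{L}_A(X')\neq\emptyset\Rightarrow$ rigidity) runs exactly as in the paper: both use $M_z$-invariance to show that $\conj{\zeta}^k h_0\in N$, test against these, and invoke F.\ and M.\ Riesz. The two genuine differences are the following.

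For the converse direction, the paper dispatches $\mathcal{L}_A(X')=\emptyset$ in one line by appealing to Khrushchev's lemma to conclude that $A$ is an SA-set, hence not rigid. Your route is different: you first isolate the clean equivalence ``rigidity $\Leftrightarrow$ $N$ dense in $L^1(A)$'' via the bipolar theorem, and then \emph{manufacture} an almost-everywhere nonvanishing element of $N$ by randomising a sequence $h_n\to 1_A$. The randomisation is correct (the key Fubini step works because $\sum c_n|\xi_n|\,\|h_n\|_{L^1}$ has finite expectation, so the pointwise series converges absolutely for a.e.\ $(\omega,\zeta)$, and for each good $\zeta$ the conditional law of $H(\zeta)$ given the remaining $\xi_n$ is a nondegenerate one-dimensional Gaussian). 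This is heavier machinery than the paper's contrapositive, but it is self-contained and yields the useful intermediate characterisation ``rigidity $\Leftrightarrow N$ dense'' explicitly.

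For the density of $\mathcal{K}_A(X')$ in $C(L^1)$, the paper invokes the Aleksandrov-type structure theorem for $M_z'$-invariant subspaces of $C(L^1)$ (\thref{LEM:Mz*K}): if the closure were proper it would be $\ker T_{\conj{\Theta}}$ for some inner $\Theta$, and then F.\ and M.\ Riesz forces $N=\{0\}$, contradicting rigidity via Khrushchev's lemma. Your argument is strictly more elementary: density of $L^1(A)+\conj{H^1_0}$ in $L^1(\T)$ (again F.\ and M.\ Riesz) pushes forward through the quotient map $C$ to give density of $C(L^1(A))$, and then density of $N$ in $L^1(A)$ finishes. This avoids the Beurling-type classification entirely; the paper's route has the compensating advantage that \thref{LEM:Mz*K} is reused verbatim in the proof of \thref{PROP:DualKOw}.
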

In order to deduce \thref{THM:Irred}, we shall need a certain description of $M_z '$-invariant subspaces in $C(L^1)$, where 
\[
M_z 'f(z) = \frac{f(z)-f(0)}{z} \qquad z\in \D.
\]
The result in question essentially follows from the work of A. B. Aleksandrov in \cite{aleksandrovinv}, see also Ch. 5.6 in \cite{cima2000backward} for a detailed exposition. But since it is rephrased quite differently there, we shall for the sake of clarity include a brief sketch of its proof.

\begin{lemma}\thlabel{LEM:Mz*K} Let $\mathcal{M}$ be a non-trivial closed $M_z '$-invariant subspace of $C(L^1)$. Then there exists a unique inner function $\Theta$, such that $\mathcal{M}$ is the kernel of the Toeplitz operator 
\[
T_{\conj{\Theta}}(h)(z) := \int_{\T} \frac{\conj{\Theta(\zeta)}h(\zeta)}{1 - \conj{\zeta} z}dm(\zeta), \qquad z\in \D,
\]  
where $h \in L^1(\T)$. In other words, 
\[
\mathcal{M} = \left\{ f= C(h): T_{\conj{\Theta}}(h) =0, \, \, h\in L^1(\T) \right\}.
\]
\end{lemma}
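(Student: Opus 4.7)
The approach is through duality, reducing the classification to the analogue of Beurling's theorem for weak-$*$ closed $M_z$-invariant subspaces of $H^\infty$. Since the Cauchy transform $h \mapsto C(h)$ annihilates precisely the subspace $\overline{H^1_0} := \{h \in L^1(\T) : \hat{h}(n) = 0 \text{ for all } n \geq 0\}$, one has $C(L^1) \cong L^1(\T)/\overline{H^1_0}$ as Banach spaces. Using the Cauchy pairing
\[
\bigl(C(h),\, g\bigr) := \lim_{r \to 1^-}\int_{\T} C(h)(r\zeta)\, \overline{g(r\zeta)}\, dm(\zeta) = \int_{\T} h\, \overline{g}\, dm,
\]
for $h \in L^1(\T)$ and $g \in H^\infty$, one obtains the identification $(C(L^1))^* \cong H^\infty$. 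A short Fourier-coefficient calculation gives $M_z'\, C(h) = C(\overline{\zeta}\, h)$, and hence $(M_z'\, C(h),\, g) = (C(h),\, M_z\, g)$; that is, $M_z$ on $H^\infty$ is the Banach adjoint of $M_z'$ on $C(L^1)$.

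With this dictionary in place, the annihilator $\mathcal{N} := \mathcal{M}^\perp \subset H^\infty$ of a non-trivial norm-closed $M_z'$-invariant subspace $\mathcal{M}$ is a proper, non-zero, weak-$*$ closed, $M_z$-invariant subspace of $H^\infty$. Appealing to the $H^\infty$-analogue of Beurling's theorem, such subspaces are exactly $\Theta H^\infty$ for an inner function $\Theta$, uniquely determined up to a unimodular constant. Bipolar duality then returns $\mathcal{M}$ as the pre-annihilator ${}_\perp(\Theta H^\infty)$: an element $f = C(h)$ lies in $\mathcal{M}$ if and only if $\int_{\T} h\, \overline{\Theta\, g}\, dm = 0$ for every $g \in H^\infty$, equivalently the non-negative Fourier coefficients of $\overline{\Theta}\, h \in L^1(\T)$ all vanish. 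This is exactly the condition $C(\overline{\Theta}\, h) = T_{\overline{\Theta}}(h) = 0$, giving $\mathcal{M} = \ker T_{\overline{\Theta}}$; uniqueness of $\Theta$ is inherited directly from the Beurling decomposition.

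The principal technical input is the Beurling-type classification of weak-$*$ closed $M_z$-invariant subspaces of $H^\infty$, which is well-documented (see, e.g., \cite{aleksandrovinv, cima2000backward}); the standard route is to pass through the $H^2$-closure of $\mathcal{N}$, apply the classical Beurling theorem there, and use the identity $\Theta H^2 \cap H^\infty = \Theta H^\infty$ together with a weak-$*$ density argument to descend. Once this ingredient is available, the remainder of the argument amounts to the essentially formal duality calculation above; the only other step requiring any care is verifying that $M_z'$-invariance really does pass to $M_z$-invariance of the annihilator (which follows immediately from the adjoint relation).
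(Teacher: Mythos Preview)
Your argument is correct and follows essentially the same route as the paper's sketch: identify $H^\infty$ as the (Cauchy-)dual of $C(L^1)$, pass from $\mathcal{M}$ to its weak-$*$ closed $M_z$-invariant annihilator in $H^\infty$, apply Beurling's theorem there to obtain $\Theta H^\infty$, and then recover $\mathcal{M}$ via bipolar duality as $\ker T_{\overline{\Theta}}$. The paper is terser and phrases the final step as recognizing $T_{\overline{\Theta}}$ as the transpose of multiplication by $\Theta$ on $H^\infty$, which is exactly the computation you carry out.
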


\begin{proof}[Proof-sketch:] Recall that the pre-dual of $C(L^1)$, considered in the Cauchy-dual pairing, can be identified with $H^\infty$. Note that the pre-annihilator $\mathcal{M}_\perp$ of $\mathcal{M}$, regarded as a weak-star closed subspace of $H^\infty$, is invariant under the shift operator $M_z$. An application of Beurling's Theorem in $H^\infty$ (c.f Theorem 7.5 in \cite{garnett}) implies that there exists a unique inner function $\Theta$, such that $\mathcal{M}_\perp = \Theta H^\infty$. It remains to verify that $\mathcal{M}$ is in the kernel of $T_{\conj{\Theta}}$. However, this claim follows from the observation that the formal transpose map of the multiplication with $\Theta$ on $H^\infty$ equals $T_{\conj{\Theta}}$ on $C(L^1)$.

\end{proof}

\begin{proof}[Proof of \thref{THM:Irred}]
\proofpart{1}{Dual reformulation of sets of rigidity:}

Note that if the set $\mathcal{L}_A(X')$ is empty, then Khrushchev's lemma implies that $A$ satisfies the SA-property wrt $X$, and hence $\Po_A(X)$ cannot be irreducible. Now suppose that the set $\mathcal{L}_A(X')$ is non-empty and we shall in fact argue that it must be dense in $L^1(A)$. Note that since $X$ was assumed to be $M_z$-invariant, a straightforward computation of its formal transpose in $X'$ reveals that $M_z ': X' \to X'$. Furthermore, a routine computation involving Cauchy kernels shows that
\[
M_z ' C(h)(z) = C (\conj{\zeta}h)(z), \qquad z\in \D
\]
whenever $h\in L^1(\T)$, hence we conclude that $\mathcal{L}_A(X')$ is invariant under the operation $h \mapsto \conj{\zeta}h(\zeta)$. Now if the non-empty set $\mathcal{L}_A(X')$ is not dense in $L^1(A)$, then there exists an element $f\in L^\infty(A)$ which annihilates it, thus
\[
\int_{A} f(\zeta) \zeta^n \conj{h(\zeta)} dm(\zeta) = 0 \qquad n\geq 0
\]
for any $h \in \mathcal{L}_A(X')$. According to the F. and M. Riesz Theorem, $f \conj{h}1_{A}$ must agree with the boundary values of an $H^1$-function, but since $A$ does not have full Lebesgue measure on $\T$, we conclude that $f\conj{h}=0$ on $A$. Since $h\neq 0$ $dm$-a.e on $A$, we are forced to conclude that $f=0$ identically as an element in $L^\infty(A)$, which is absurd and  $\mathcal{L}_A(X')$ is therefore dense in $L^1(A)$. Now let $(Q_n)_n$ be a sequence of analytic polynomials with the property that $Q_n \to 0$ in $X$ and $Q_n \to f$ weak-star in $L^\infty(A)$. Then, for any $h \in L^1(A)$ with the property that $C(h) \in X'$, we have 
\[
\int_A f \conj{h} dm = \lim_n \int_{A} Q_n \conj{h} dm = \lim_n \lim_{r\to 1-} \int_{\T} Q_n(r\zeta) \conj{C(h)(r\zeta)} dm=0.
\]
Now since $\mathcal{L}_A(X')$ was dense in $L^1(A)$, we conclude that $f=0$ on $A$, hence $A$ is a set of rigidity for $X$.

\proofpart{2}{Density of $\Ka_A(X')$:} We primarily note that since $X$ is $M_z$-invariant, an simple computation of the transpose of $M_z$ implies that the Cauchy dual $X'$ is $M'_z$-invariant. This means that $\Ka_A(X')$ regarded as a subset of $C(L^1)$ is $M_z '$-invariant. Now suppose that $\Ka_A(X')$ is not dense in $C(L^1)$, then according to \thref{LEM:Mz*K} there exists a non-trivial inner function $\Theta$ such that 
\[
\textbf{Clos}\left(\Ka_A(X') \right)_{C(L^1)} = \left\{ f= C(h): T_{\conj{\Theta}}(h) =0, \, \, h\in L^1(\T) \right\}
\]
This implies that for any $h \in L^1(A)$ with $C(h) \in X'$ satisfies 
\[
\sum_{n\geq 0} z^n \int_{\T} \conj{\Theta(\zeta)} \zeta^{-n} h(\zeta) dm(\zeta) = T_{\conj{\Theta}}(h)(z)  = 0  \qquad z\in \D.
\]
By the F. and M. Riesz Theorem, $\Theta\conj{h}$ agrees with the boundary values of an $H^1$-function, but since $h$ vanishes off the set $A$ (a set of positive Lebesgue measure) and $\abs{\Theta} =1$ $dm$-a.e on $\T$, we conclude that $h \equiv 0$ for any $h \in L^1(A)$ with $C(h) \in X'$. But this implies that $\Ka_A(X') =\{0\}$, which according to Khrushchev's Lemma implies that $A$ satisfies the SA-property wrt $X$, contradicting the initial assumption that $\Po_A(X)$ was irreducible. We therefore conclude that $\Ka_A(X') = C(L^1)$ and the proof is complete.

\end{proof}

We highlight the dichotomy regarding the linear manifold $\mathcal{L}_A(X')$ that arose in the proof of \thref{THM:Irred}. The assumption $\abs{A}<1$ and together with the invariance of $\mathcal{L}_A(X')$ under the action of the multiplication operator $h \mapsto \conj{\zeta}h(\zeta)$ forces the linear manifold $\mathcal{L}_A(X')$ to be either empty or dense in $L^1(A)$.

\subsection{A structural theorem on asymptotic polynomial approximation}
 
The main result in this section is the proof of \thref{THM:Bstruc}, which here is established in the broader context of holomorphic Banach spaces. 

\begin{thm} \thlabel{THM:AbsThom}
Let $X$ be a holomorphic Banach space and $E$ be Lebesgue measurable subset of $\T$. Then there exists a unique (modulo sets of Lebesgue measure zero) partition of $E$ into subsets $A,S$ satisfying the following properties:
\begin{enumerate}
    \item[(i)] $A$ is a set of rigidity for $X$.
    \item[(ii)] $S$ satisfies the SA-property wrt $X$. 
\end{enumerate}
Moreover,the following isometric isomorphic identification 
\begin{equation}\label{absThomdec}
    \Po_E(X) \cong \Po_A(X) \oplus L^\infty(S).
\end{equation}
\end{thm}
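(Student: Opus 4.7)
The plan is to exploit the dual description of $\Po_E(X)$. A direct computation parallel to \thref{SAduality} shows that the annihilator of $\Po_E(X)$ in $X'\oplus L^1(E)$ equals $\{(-C(h),h) : h \in \mathcal{J}_E\}$, where $\mathcal{J}_E := \{h \in L^1(E) : C(h) \in X'\}$, so the entire structure of $\Po_E(X)$ is encoded by $\mathcal{J}_E$. I would define $A\subseteq E$ to be the essential supremum of the family $\{\{h\neq 0\} : h\in\mathcal{J}_E\}$ in the measure algebra of $E$, which exists by Dedekind completeness of $L^\infty(E)$, and set $S:=E\setminus A$. By separability of $L^1(E)$, $A$ is realised as $A=\bigcup_n\{h_n\neq 0\}$ modulo nullsets for some countable sequence $(h_n)\subseteq \mathcal{J}_E$. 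The pivotal consequence of the construction is that every $h\in\mathcal{J}_E$ vanishes off $A$, hence $\mathcal{J}_E = \mathcal{J}_A$.

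Property (ii) is now immediate: $\mathcal{J}_S = \mathcal{J}_E\cap L^1(S)=\{0\}$ by construction, and Khrushchev's lemma (\thref{SAduality}) gives that $S$ has the SA-property. For (i), I need to exhibit an $h\in\mathcal{L}_A(X')$ nonzero a.e.\ on $A$, which I would form as $h := \sum_{n} c_n e^{i\theta_n} h_n$ with $c_n := 2^{-n}(1+\|h_n\|_{L^1}+\|C(h_n)\|_{X'})^{-1}$ ensuring convergence of the series in $L^1$ and of $C(h)$ in $X'$, and with independent uniform phases $\theta_n\in[0,2\pi)$. For each $\zeta\in A$ at which some $h_N(\zeta)\neq 0$, conditioning on $(\theta_k)_{k\neq N}$ reduces the event $\{h(\zeta)=0\}$ to a zero-probability single-point event for $e^{i\theta_N}$, and a Fubini argument shows that $h\neq 0$ a.e.\ on $A$ almost surely, so a valid choice exists. \thref{THM:Irred} then gives that $A$ is rigid.

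For the isomorphism \eqref{absThomdec}, the natural map is $\Psi((f,f^*_A),\psi_S) := (f, f^*_A + \psi_S)$ with $f^*_A\in L^\infty(A)$ and $\psi_S\in L^\infty(S)$ extended by zero to $E$. Well-definedness follows from duality: the identity $\mathcal{J}_E=\mathcal{J}_A$ forces the annihilator conditions for $\Po_A(X)$ and $\Po_E(X)$ to coincide after zero-extension, placing $(f,f^*_A)$ in $\Po_E(X)$; and $(0,\psi_S)\in\Po_E(X)$ because every $h\in\mathcal{J}_E$ vanishes on $S$, so the annihilator pairing is zero. Injectivity is automatic from $A\cap S=\emptyset$; surjectivity follows by writing $(f,f^*) = (f,f^*|_A) + (0,f^*|_S)$ and placing $(f,f^*|_A)$ in $\Po_A(X)$ by restricting the weak-star convergence of approximating polynomials to $A$; and a bounded inverse is supplied by the open mapping theorem. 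Uniqueness is derived from the intrinsic identity $\ker(\pi_1|_{\Po_E(X)})=\{0\}\oplus L^\infty(S)$ obtained above: for any competing partition $(A',S')$ satisfying (i)-(ii), the inclusion $A'\subseteq A$ is immediate from \thref{THM:Irred} and the essential-union definition of $A$, while the reverse inclusion $A\subseteq A'$ follows by testing the SA-property of $S'$ against the constructed $h\in\mathcal{L}_A(X')$ via $\phi := (h/|h|)\cdot 1_{A\cap S'}\in L^\infty(S')$ and reading off a contradiction from the Cauchy-dual pairing whenever $|A\cap S'|>0$. The main obstacle I anticipate is producing the nonzero $h\in\mathcal{L}_A(X')$: naive combinations of the $h_n$ can cancel on a set of positive measure, so the random-phase decoupling step is essential in realising $A$ as the support of a single element of $\mathcal{J}_E$.
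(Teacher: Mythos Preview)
Your construction of the partition is genuinely different from the paper's and in some ways cleaner. The paper works on the primal side: it defines $A$ via the extremal problem $\iota(X,E)=\sup\{|A_0|:A_0\subseteq E\text{ rigid}\}$, builds $A$ as an increasing union of rigid subsets, and then deduces the SA-property of $S=E\setminus A$ from maximality by appealing to Khrushchev's lemma and \thref{THM:Irred}. You work on the dual side from the outset, defining $A$ as the essential supremum of supports in $\mathcal{J}_E$, so that the identity $\mathcal{J}_E=\mathcal{J}_A$ and the SA-property of $S$ are automatic. Your random-phase device $h=\sum_n c_n e^{i\theta_n}h_n$ to manufacture a single element of $\mathcal{L}_A(X')$ is an elegant replacement for the paper's chain argument (which only produces rigidity of $A$ indirectly via the union lemma). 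The two $A$'s coincide via \thref{THM:Irred}, and your route makes the annihilator calculation behind \eqref{absThomdec} more transparent.

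There is, however, a gap in your uniqueness argument for the inclusion $A\subseteq A'$. Suppose $|A\cap S'|>0$ and take polynomials $Q_n\to 0$ in $X$ with $Q_n\to\phi$ weak-star in $L^\infty(S')$. The Cauchy pairing gives
\[
\langle Q_n, C(h)\rangle_X = \int_A Q_n\,\overline{h}\,dm = \int_{A\cap S'}Q_n\,\overline{h}\,dm + \int_{A'}Q_n\,\overline{h}\,dm,
\]
and while the left side tends to $0$ and the first integral on the right tends to $\int_{A\cap S'}|h|\,dm>0$, the term $\int_{A'}Q_n\,\overline{h}\,dm$ is completely uncontrolled: weak-star convergence in $L^\infty(S')$ gives no information on $Q_n|_{A'}$, which need not even be bounded. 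So the pairing does not yield a contradiction. The underlying obstruction is that you would need $C(h|_{S'})\in X'$ to invoke Khrushchev's lemma on $S'$, and nothing in the setup guarantees this. (The paper, incidentally, does not spell out a uniqueness argument either; it asserts uniqueness in the statement but the three proof steps only construct one partition.)
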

We remark that the decomposition in \eqref{absThomdec} may be viewed as an abstract version of Chaumat's lemma (for instance, see \cite{conway1991theory}) in the broader setting of holomorphic Banach spaces. Again, we emphasize that the decomposition appearing in the statement is relative to the initial set $E$, and, a priori, the corresponding Lebesgue measurable subsets $A, S$ of $E$ are allowed to be of zero Lebesgue measure. For instance, if $X$ is contained in the Nevanlinna class $\mathcal{N}$ then the classical Khinchin-Ostrowski Theorem ensures that every set of positive Lebesgue measure is a set of rigidity for $X$. As previously mentioned, sets of full measure in $\T$ are sets of rigidity for holomorphic Banach spaces $X$.

\begin{proof}[Proof of \thref{THM:AbsThom}] Since the statement is phrased modulo sets of Lebesgue measure zero, and sets of full Lebesgue measure are always sets of rigidity for $X$, we may assume that $0<|E|<1$. 
\proofpart{1}{Constructing sets of rigidity:}

Consider the quantity 
\begin{equation}\label{maxirr}
    \iota(X,E) := \sup \left\{ |A|: A \subseteq E, \, \, \, \Po_{A}(X) \, \, \text{is irreducible}  \right\}.
\end{equation}
We shall for a moment assume that the above set is non-empty. First we note that it is straightforward to verify that if $A_1, A_2$ are both sets of rigidity for $X$, then so is their union $A_1 \cup A_2$. We may therefore pick a chain of subsets $A_N \subseteq A_{N+1}$ of $E$ with the property that $\abs{A_N}$ increase up to $\iota(X,E)$. Setting $A := \cup_N A_N$
we see that $\abs{A} = \iota(X,E)$ and it remains to verify that $\Po_A(X)$ is irreducible. To this end, let $\{Q_n\}_n$ be analytic polynomials which converge to $(0,f)$ in $X \oplus L^\infty(A)$. Now since each $\Po_{A_N}(X)$ is irreducible it follows that $f = 0$ on each $A_N$. But since $\{A_N\}_N$ is an exhaustion of $A$ we get that $1_{A_N} f \to 1_A f$ pointwise on $\T$, thus $f=0$ on $A$ and $\Po_A(X)$ is therefore irreducible.

\proofpart{2}{The subset satisfying the SA-property:}
Set $S := E \setminus A$ in case $A$ is non-trivial and $S=E$ in case $A$ is the empty set, 
and it remains to verify that $S$ satisfies the SA-property wrt $X$. Arguing by duality using \thref{SAduality}, suppose the exists an element $h \in L^1(S)$ such that $C(h)$ is a non-trivial element in $X'$. Now if $S_0$ denotes a carrier set for $h$, then it follows from \thref{THM:Irred} that $\Po_{S_0}(X)$ is irreducible. But then $\Po_{A \cup S_0}(X)$ is also irreducible, which contradictions the maximality of $A$, since $\abs{A} + \abs{S_0} = \abs{A \cup S_0} > \iota(X,E)$. We therefore conclude that $S$ satisfies the SA-property wrt $X$.

\proofpart{3}{Verifying the Chaumat-type decomposition:} Note since $\Po_A(X)$ was proved to be irreducible, we may $\Po_E(X)$ regarded as a closed subspace of $\Po_A(X) \oplus L^\infty(S)$, hence it remains to prove the reverse containment. Let $\ell \in \Po_E(X)^\perp$ be an element in the annihilator of $\Po_E(X) \subseteq X \oplus L^\infty(E)$, regarded as a subspace of $(X \oplus L^\infty(E) )' \cong X' \oplus L^1(E)$. Then $\ell = (f,h) \in X' \oplus L^1(E)$ and we seek the annihilator for which the carrier set of $h$ has maximal Lebesgue measure. 
Note that elements $(f,h) \in \Po_E(X)^\perp$ satisfy the property that 
\[
f_n = \lim_{r \to 1-} \int_{\T} f(r\zeta) \zeta^{-n}dm(\zeta) = - \int_{\T}h(\zeta) \zeta^{-n} dm(\zeta) = - \widehat{h}(n), \qquad n=0,1,2, \dots,
\]
where $f_n$ are the Taylor coefficients of $f$ and $\widehat{h}(n)$ are the Fourier coefficients of $h$. This implies that $-f = C(h)$ with $h\in L^1(E)$ and is a non-trivial element in $X'$, whenever $f$ is. Now since $A\subseteq E$ was the subset for which $\abs{A}$ was maximal and $\Po_A(X)$ irreducible, it follows by \thref{THM:Irred} that there exists an element $h\in L^1(A)$ with $h\neq 0$ $dm$-a.e on $A$ and $C(h) \in X' \setminus \{0\}$, and no carrier set of an element $h \in L^1(E)$ with $(f,h) \in \Po_E(X)^{\perp}$ can exceed $A$. This implies that any $\Po_E(X)^{\perp} \subseteq \Po_A(X)^{\perp} \oplus \{0\}$ (actually equality), where the later expression is regarded as a subspace of $X' \oplus L^1(A) \oplus L^1(S)$. Taking pre-annihilators (c.f. Ch. 4.7 in \cite{rudin1973functional}), we arrive at $\Po_E(X) \supseteq \Po_A(X) \oplus L^\infty(S)$. This completes the proof. 
\end{proof}

The proof of \thref{THM:Bstruc} now readily follows, except for the claim on conformal invariance of the $SA$-sets and sets of rigidity. In order to establish the last part of \thref{THM:Bstruc}, we introduce a lemma which highlights some simple and general properties. 

\begin{lemma}\thlabel{LEM:SA-RIG} Let $X$ be a holomorphic Banach space. Then the following properties hold.

\begin{description}
    \item[\textit{Restriction principle}] If a Lebesgue measurable set $E\subset \T$ satisfies the SA-property wrt $X$, then so does any Lebesgue measurable subset $S \subseteq E$.    
    \item[\textit{Monotonicity principle}] If $\{E_n\}_n$ are sets of rigidity for $X$, then their union $E= \cup_n E_n$ is also a set of rigidity for $X$.
\end{description}
    
\end{lemma}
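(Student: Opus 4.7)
The plan is to deduce both statements from a single elementary observation: if $(Q_n)_n$ converges weak-star in $L^\infty(E)$, then for every measurable $S \subseteq E$ the same sequence converges weak-star in $L^\infty(S)$ to the restriction of the limit, because extension by zero isometrically embeds $L^1(S)$ into $L^1(E)$. No new duality machinery beyond Khrushchev's Lemma (\thref{SAduality}) and the testing reformulation of \thref{DEF:IRRX} is needed.

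For the Restriction principle, I would use the equivalent characterization recorded after \thref{DEF:SAX}: $E$ is an SA-set for $X$ precisely when $(0,1) \in \Po_E(X)$. Pick polynomials $Q_n \to 0$ in $X$ with $Q_n \to 1$ weak-star in $L^\infty(E)$; then for every $h \in L^1(S)$, extending $h$ by zero off $S$ gives an element of $L^1(E)$ and
\[
\int_S Q_n h\, dm \;=\; \int_E Q_n h\, dm \;\longrightarrow\; \int_E h\, dm \;=\; \int_S h\, dm.
\]
Hence $(0,1) \in \Po_S(X)$, so $S$ is an SA-set for $X$. A dual variant also works: a non-trivial $h \in L^1(S)$ with $C(h) \in X'$ would, after extension by zero, furnish a non-trivial element of $L^1(E)$ with Cauchy transform in $X'$, contradicting Khrushchev's Lemma applied to $E$.

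For the Monotonicity principle, take polynomials $Q_n \to 0$ in $X$ with $Q_n \to f$ weak-star in $L^\infty(E)$. By the same restriction observation, $Q_n \to f|_{E_k}$ weak-star in $L^\infty(E_k)$ for every index $k$, so the rigidity hypothesis on $E_k$ forces $f = 0$ $dm$-a.e.\ on $E_k$. Assuming, as the notation suggests, that the family $\{E_n\}_n$ is countable, the union of these null exceptional sets is null, and therefore $f = 0$ $dm$-a.e.\ on $E = \bigcup_n E_n$, which shows that $E$ is a set of rigidity for $X$.

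I do not expect any real obstacle in either part; the content of the lemma is essentially the verification that the topology defining $\Po_E(X)$ is well-behaved under restriction of the underlying measurable set, and the only mild point to flag is the countability of the family in the monotonicity statement, which is needed to keep the exceptional null set manageable.
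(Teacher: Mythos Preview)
Your proof is correct and follows essentially the same approach as the paper, which merely remarks that the restriction principle ``readily follows by definition'' and that the monotonicity principle ``follows from the proof of \thref{THM:AbsThom}''---where indeed the argument is exactly your restriction-to-each-$E_n$ observation. Your flag about countability is appropriate and matches the paper's implicit assumption.
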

The restriction principle of SA-sets readily follows by definition, while the monotonicity principle for sets of rigidity follows from the proof of \thref{THM:AbsThom}.

\begin{proof}[Proof of \thref{THM:Bstruc}]
   As an immediate consequence of \thref{THM:AbsThom}, it only remains to establish the claims on conformal invariance of the $SA$-sets and the sets of rigidity. To this end, let $E$ be a Lebesgue measurable set $E\subseteq \T$ which is the carrier set of an element $h \in L^1(\T)$ with $C(h) \in W^1_a(w) \neq \{0\}$. We primarily note that the Cauchy transform of the translate $h_t(\zeta) := h(e^{-it}\zeta)$ is carried on the translated set $E+t = \{e^{it} \zeta: \zeta \in E \}$ and again belongs to $W^1_a(w)$. According to \thref{THM:Irred}, we conclude that sets for rigidity are invariant under rotations. Next, we fix a number $\lambda \in \D$ and consider the automorphism on $\D$, defined by
    \[
    \phi_\lambda (z) = \frac{\lambda-z}{1-\conj{\lambda}z}, \qquad z\in \D
    \]
    and recall that $\phi_\lambda \circ \phi_\lambda (z)= z$. Using a change a variable and carrying out some basic algebraic manipulations, we can show that 
    \[
    C(h)(\phi_\lambda(z)) = \frac{1}{1-\conj{\lambda}z} \int_{\phi_{\lambda}(E)} \frac{(h\circ \phi_\lambda)(\zeta) \abs{\phi'_\lambda(\zeta)}(1-\lambda\conj{\zeta})}{1-\conj{\eta}\phi_\lambda (z)} dm(\zeta), \qquad z\in \D.
    \]
In a similar way, this calculation in conjunction with \thref{THM:Irred} implies that sets of rigidity are invariant under compositions with automorphisms $\phi_\lambda$, for any $\lambda\in \D$. Since any conformal self-map on $\D$ is a composition of a rotation and automorphism of the form $\phi_{\lambda}$, we conclude that sets of rigidity for $\B_0(w)$ are conformally invariant. We now turn to the conformal invariance of SA-sets, so let $E$ be an SA-set for $\B_0(w)$ and suppose for sake of obtaining a contradiction, that there exists an automorphism $\phi$ on $\D$ such that $\phi(E)$ is not an SA-set for $\B_0(w)$. According to \thref{THM:AbsThom}, there exists a subset $S \subseteq \phi(E)$ of positive Lebesgue measure, such that $S$ is a set of rigidity for $\B_0(w)$. But the conformal invariance of sets of rigidity in $\B_0(w)$ implies that $\phi^{-1}(S) \subseteq E$ is also a set of rigidity for $\B_0(w)$. But $E$ was assumed to be an SA-set for $\B_0(w)$ and the restriction principle for SA-sets in \thref{LEM:SA-RIG} therefore gives the desired contradiction. The proof is now complete.
\end{proof}

\subsection{The weak Khinchin-Ostrowkski property}
Our main emphasis here is to derive the relationship between SA-sets and sets of rigidity, to the weak Khinchin-Ostrowski property, in the broad context of holomorphic Banach spaces. First, we give the dual reformulation of the weak KO-property.

\begin{prop}\thlabel{PROP:DualKOw} Let $X$ be a holomorphic Banach space and let $E\subseteq \T$ be Lebesgue measurable set. Then $E$ satisfies the weak KO-property wrt $X$ if and only if the linear manifold 
\[
\mathcal{K}_E(X') := \left\{C(h)\in X': h \in L^1(E) \right\}
\]
is norm-dense in $C(L^1)$.
\end{prop}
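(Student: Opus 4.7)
The plan is to follow the duality template of Khrushchev's Lemma (\thref{SAduality}) and of Step 2 in the proof of \thref{THM:Irred}, adapted to the weak KO setting. The main starting observation is that the weak KO-property is the statement $(f,0) \in \Po_E(X) \implies f = 0$ restricted to bounded $f \in X$, which by the Hahn-Banach computation of the annihilator of the diagonal (already performed in the proof of \thref{THM:AbsThom}) translates into the orthogonality requirement $\langle f, C(h)\rangle_{X,X'} = 0$ for every $C(h) \in \mathcal{K}_E(X')$.

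For the sufficiency direction, suppose $\mathcal{K}_E(X')$ is norm-dense in $C(L^1)$, and let $(Q_n)_n$ be polynomials with $Q_n \to f$ in $X$ for a bounded $f$ and $Q_n \to 0$ weak-star in $L^\infty(E)$. For every $h \in L^1(E)$ with $C(h) \in X'$, the identity $\langle Q_n, C(h)\rangle_{X,X'} = \int_E Q_n \conj{h}\,dm$ combined with weak-star convergence yields $\langle f, C(h)\rangle_{X,X'} = 0$. Since $f$ is bounded, this pairing coincides with the Cauchy pairing realizing $H^\infty$ as $C(L^1)^*$, and the norm-density of $\mathcal{K}_E(X')$ then forces $f = 0$ by Hahn-Banach.

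For the necessity direction I argue by contraposition: assume $\mathcal{K}_E(X')$ is not norm-dense in $C(L^1)$, and I shall produce a violation of the weak KO-property. The boundary case $|E|=1$ is disposed of immediately, since the analytic polynomials lie in $\mathcal{K}_\T(X')$ and are already norm-dense in $C(L^1)$ (their $H^\infty$-annihilator being trivial by a direct Fourier computation), contradicting the assumption. So one may assume $|E|<1$. The manifold $\mathcal{K}_E(X')$ is $M_z'$-invariant via $M_z' C(h) = C(\conj{\zeta}h)$, so its norm-closure is a proper $M_z'$-invariant closed subspace of $C(L^1)$, and \thref{LEM:Mz*K} produces an inner function $\Theta$ with $\overline{\mathcal{K}_E(X')} = \ker T_{\conj{\Theta}}$. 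Repeating verbatim the argument in Step 2 of the proof of \thref{THM:Irred}, the condition $T_{\conj{\Theta}}(h) = 0$ forces $\Theta\conj{h}$ to be the boundary values of an $H^1_0$-function vanishing on the set $E^c$ of positive measure, whence the F.\ and M.\ Riesz theorem, together with $|\Theta|=1$ a.e., yields $h \equiv 0$. Thus $\mathcal{L}_E(X') = \emptyset$, and Khrushchev's Lemma (\thref{SAduality}) certifies $E$ as an SA-set for $X$. But then $(1,0) \in \Po_E(X)$ delivers polynomials $Q_n \to 1$ in $X$ with $Q_n \to 0$ weak-star in $L^\infty(E)$, in flagrant violation of the weak KO-property.

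The most delicate technical point is verifying that, for $f \in X \cap H^\infty$, the abstract $X$-$X'$ pairing $\langle f, C(h)\rangle_{X,X'}$ coincides with the concrete Cauchy pairing $\lim_{r \to 1^-}\int_\T f(r\zeta)\conj{C(h)(r\zeta)}\,dm$ used to identify $H^\infty$ with $C(L^1)^*$. This is routine: both pairings extend the same functional on analytic polynomials, and the density of polynomials in $X$ together with an Abel-summation argument yields the identification $\langle f, C(h)\rangle_{X,X'} = \int_\T f \conj{h}\,dm$. Modulo this, the proposition is a clean recombination of Khrushchev's duality with the backward-shift invariant subspace analysis in $C(L^1)$ already established in this section.
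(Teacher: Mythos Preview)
Your proof is correct and follows essentially the same route as the paper: the sufficiency direction uses the identity $\langle Q_n, C(h)\rangle = \int_E Q_n\conj{h}\,dm$ and the $C(L^1)$--$H^\infty$ duality exactly as the paper does, and the necessity direction reduces, via \thref{LEM:Mz*K} and the F.\ and M.\ Riesz argument from Step~2 of \thref{THM:Irred}, to showing that $E$ is an SA-set and hence fails the weak KO-property. Your treatment is in fact slightly more careful than the paper's: you dispose of the boundary case $|E|=1$ explicitly (the paper's appeal to the argument in \thref{THM:Irred} tacitly requires $|\T\setminus E|>0$), and you flag the identification of the abstract $X$--$X'$ pairing with the concrete $H^\infty$--$C(L^1)$ pairing, which the paper uses without comment.
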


\begin{proof}[Proof of \thref{PROP:DualKOw}]
Suppose that the linear manifold $\Ka_E(X')$ is dense in $C(L^1)$ and let $(Q_n)_n$ be analytic polynomials with $Q_n \to f$ in $X$ to some $f \in H^\infty$ and $Q_n \to 0$ in $L^\infty(E)$. Then for any $h \in L^1(E)$ with $C(h) \in X'$ we have 
\[
\lim_{r\to 1-} \int_{\T} f(\zeta) \conj{C(h)(r\zeta)} dm(\zeta) = \lim_n \lim_{r\to 1-} \int_{\T}Q_n\conj{C(h)} dm = \lim_n \int_E Q_n \conj{h} dm =0.
\]
Since such elements $C(h)$ are dense in $C(L^1)$, which is the pre-dual of $H^\infty$, we conclude that $f\equiv 0$ on $\D$. This proves that $E$ satisfies the weak KO-property wrt $X$. Conversely, suppose $\Ka_E(X')$ is not dense in $C(L^1)$. Recall that since $M_z ' : X' \to X'$, we have according to \thref{LEM:Mz*K} that
\[
\textbf{Clos}\left(\Ka_A(X') \right)_{C(L^1)} = \left\{ f= C(h): T_{\conj{\Theta}}(h) =0, \, \, h\in L^1(\T) \right\},
\]
for some inner function $\Theta$. A similar argument as in the proof of \thref{THM:Irred} shows that $h \equiv 0$ for any $h \in L^1(E)$ with $C(h) \in X'$ and therefore $E$ satisfies the SA-property wrt $X$ by Khrushchev's lemma. Consequently, $E$ cannot satisfy the weak KO-property wrt $X$.

\end{proof}
We complete the section by establishing the precise relationships between SA-sets, sets of rigidity and sets satisfying the weak KO-property, in the context of holomorphic Banach spaces.
\begin{thm}\thlabel{THM:KOprop} Let $X$ be a holomorphic Banach space and let $E\subseteq \T$ a Lebesgue measurable set. Then the following conditions are all equivalent.
\begin{enumerate}
    \item[(i)] $E$ does not satisfy the SA-property wrt $X$.
    \item[(ii)] $E$ almost contains a subset $A$ of positive Lebesgue measure, such that $A$ is a set of rigidity for $X$.
    \item[(iii)] $E$ satisfies the weak KO-property wrt $X$.
\end{enumerate}

\end{thm}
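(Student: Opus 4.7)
The plan is to reduce all three conditions to statements about the linear manifold $\Ka_E(X') := \{C(h) \in X' : h \in L^1(E)\}$ viewed inside $C(L^1)$, leveraging the dual reformulations already assembled in this section: Khrushchev's lemma (\thref{SAduality}) expresses the SA-property as $\Ka_E(X') = \{0\}$; the density statement in \thref{THM:Irred} asserts that the presence of a set of rigidity of positive measure forces $\Ka_A(X')$ to be norm-dense in $C(L^1)$; and \thref{PROP:DualKOw} identifies the weak KO-property with the norm-density of $\Ka_E(X')$ in $C(L^1)$.

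After disposing of the edge cases $|E| \in \{0,1\}$ (if $|E|=0$ all three statements fail simultaneously, while if $|E|=1$ all three hold, since $E$ is automatically a set of rigidity, polynomials converging weak-star to zero on $\T$ must vanish identically on $\D$ by matching Taylor coefficients, and $(0,1)$ cannot lie in $\Po_E(X)$ for the same reason), I would from now on assume $0 < |E| < 1$. For $(i) \Leftrightarrow (ii)$ I would appeal to the structural theorem \thref{THM:AbsThom}: if $E$ is not SA, the canonical partition $E = A \cup S$ into a set of rigidity and an SA-set must give $|A| > 0$, for otherwise $E$ would agree with $S$ up to a null set and hence $\Po_E(X) = \Po_S(X) = X \oplus L^\infty(E)$, contradicting the failure of SA. Conversely, if $E$ almost contains a set of rigidity $A$ of positive measure, then $E$ cannot be SA: the restriction principle in \thref{LEM:SA-RIG} would force $A$ to be SA as well, and then $(0, 1_A)$ and $(0,0)$ would both lie in $\Po_A(X)$ with identical projection onto $X$, so the injectivity encoded in rigidity would yield $1_A = 0$ in $L^\infty(A)$, contradicting $|A| > 0$.

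For $(ii) \Rightarrow (iii)$ I would use the density statement in \thref{THM:Irred}: once $A \subseteq E$ is a set of rigidity with $0 < |A| < 1$, the manifold $\Ka_A(X')$ is already norm-dense in $C(L^1)$, and the inclusion $\Ka_A(X') \subseteq \Ka_E(X')$ inherited from $L^1(A) \subseteq L^1(E)$ promotes this density to $\Ka_E(X')$; \thref{PROP:DualKOw} then converts this into the weak KO-property for $E$. For $(iii) \Rightarrow (i)$ I would argue by contrapositive: if $E$ is SA, Khrushchev's lemma gives $\Ka_E(X') = \{0\}$, which is plainly not dense in the nontrivial space $C(L^1)$, and \thref{PROP:DualKOw} then denies the weak KO-property.

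There is no serious technical obstacle here; the theorem is essentially a repackaging of the duality framework developed earlier in this section, with each condition translated into a statement about the size of $\Ka_E(X')$ inside $C(L^1)$. The only points requiring care are the segregation of the trivial full-measure and zero-measure cases, and the observation that the hypothesis $|A| < 1$ in \thref{THM:Irred} is automatically secured once we have reduced to $0 < |E| < 1$.
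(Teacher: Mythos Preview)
Your proposal is correct and follows essentially the same route as the paper: the equivalence of $(i)$ and $(ii)$ is extracted from the structural theorem \thref{THM:AbsThom}, while $(ii)\Rightarrow(iii)$ goes through the density of $\Ka_A(X')$ in $C(L^1)$ from \thref{THM:Irred} combined with \thref{PROP:DualKOw}, and $(iii)\Rightarrow(i)$ is the easy contrapositive via Khrushchev's lemma. Your treatment is in fact slightly more explicit than the paper's in two respects: you handle the edge cases $|E|\in\{0,1\}$ separately, and you spell out the inclusion $\Ka_A(X')\subseteq\Ka_E(X')$ that the paper leaves implicit when invoking \thref{PROP:DualKOw}.
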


\begin{proof}
If $E$ almost contains no non-trivial subset $A$ which is a set of rigidity for $X$, then the quantity $\iota(X,E) =0$ in \thref{THM:AbsThom} and it follows from the same theorem that $E$ satisfies the SA-property wrt $X$, hence the weak KO-property of $E$ wrt $X$ cannot hold. This proves the chain of implications: $(iii) \implies (i) \implies (ii)$, hence it remains to prove that $(ii) \implies (iii)$. To this end, suppose that $\Po_A(X)$ is irreducible for some non-trivial subset $A \subseteq E$. According to the second statement of \thref{THM:Irred}, the linear manifold
\[
\Ka_A(X') = \left\{C(h) \in X': h \in L^1(A) \right\}
\]
is dense in $C(L^1)$. But \thref{PROP:DualKOw} asserts that this is equivalent to $E$ satisfying the weak KO-property wrt $X$, hence the proof is complete.
\end{proof}
\section{Existence of SA sets in weighted Bloch spaces}\label{SEC:4}
\subsection{Necessity of the square Dini condition}

We shall first give the short proof of the necessity of the condition \eqref{DiniDiv} for SA-set of positive Lebesgue measure in $\B_0(w)$. 

\begin{proof}[Proof of neccesity of \thref{THM:SAsets}]

Note that whenever $f\in \B_0(w)$, then for any arc $I$
\[
\int_{Q_I} \abs{f'(z)}^2 (1-|z|) dA(z) \lesssim \norm{f}^2_{\B(w)} |I| \int_0^{|I|} \frac{w^2(t)}{t} dt,
\]
where $Q_I = \{z\in \D: z/|z| \in I, 1-|z| \leq |I| \}$ denotes the Carleson square associated to $I$. Now if $\int_0^1 w^2(t)/t dt < \infty$ holds, then $\B_0(w)$ is contained in $VMOA$ (cf. \cite{cauchytransform} for definition) and thus also in the classical Hardy space $H^2$. One can now finish the proof by appealing to the classical Theorem of Khinchin and Ostrowski (for instance, see \cite{havinbook}), but for the sake of completeness we give a short and simple proof. Suppose $E\subset \T$ is a measurable set and $\{Q_n\}_n$ is a sequence of analytic polynomials with the properties that $Q_n \to f$ in $C(E)$ and $Q_n \to 0$ in $\B_0(w)$. By the previous argument, $Q_n \to 0$ in $H^2$, hence passing to a subsequence if necessary and by means of applying Egoroff's Theorem, we conclude that $f$ is a $H^2$-function which vanishes on $dm$-a.e $E$. Therefore, whenever condition \eqref{DiniDiv} fails, any measurable set of positive Lebesgue measure is a set of rigidity for $\B_0(w)$.
\end{proof}

\subsection{Main construction}
In this subsection, we shall under the assumption \eqref{DiniDiv} construct non-trivial $SA$-set for $\B_0(w)$. Our construction will be principally inspired from the work of S. Khrushchev (see Theorem 2.5 in \cite{khrushchev1978problem}). Let $0< \delta < 1$ and set $I_{\delta}=\{\zeta \in \T: |\zeta-1|\leq \delta/2\}$. Fix $0<\varepsilon<1$ and consider bounded real-valued functions $\psi_{\varepsilon,\delta}$ on $\T$ with the following properties:

\begin{enumerate}
    \item[(i)] $\int_{\T} \psi_{\varepsilon, \delta} dm = 0$. 
    \item[(ii)] $\psi_{\varepsilon, \delta} = \log \varepsilon$ on $\T \setminus I_{\delta}$.
\end{enumerate}


    
Define the corresponding outer function $F_{\varepsilon, \delta}$ on $\D$ with 
\[
\abs{F_{\varepsilon, \delta}(\zeta)} = \exp \psi_{\varepsilon, \delta}(\zeta) \qquad \zeta \in \T.
\]
For the sake of future references, we list the following obvious properties of $F_{\varepsilon, \delta}$ below.
\begin{enumerate}
    \item[(i)] $F_{\varepsilon, \delta}(0)=1$.
    \item[(ii)] $\abs{F_{\varepsilon, \delta}(\zeta)}= \varepsilon$ off the arc $I_\delta$.
    \item[(iii)] $\norm{F_{\varepsilon, \delta}}_{H^\infty} = N(\varepsilon, \delta)$, for some $N(\varepsilon, \delta) \to \infty$ if either $\varepsilon, \delta$ tend to $0$.
    \end{enumerate}
The first two properties are self-explanatory, while the second follows from weak-star compactness of the unit-ball in $H^\infty$. 

\begin{remark}\thlabel{REM:smoothcutoff} Note that one has tremendous freedom in determining the degree of smoothness of the functions $\psi_{\varepsilon, \delta}$. For instance, one can easily take them to be smooth on $\T$, which at its term gives rise to bounded analytic functions $F_{\varepsilon, \delta}$ on $\D$ with smooth extensions to the boundary $\T$. Indeed, this follows from fact that the Cauchy transform maps preserve smoothness on $\T$. Although, the precise smoothness properties of the functions $F_{\varepsilon, \delta}$ will be irrelevant in the proof of \thref{THM:SAsets}, it will useful when we shall phrase a general result in Section 4.3.
    
\end{remark}

For our purposes, we shall need the existence of inner functions, such that their hyperbolic derivative enjoy a certain prescribed radial decay. 

\begin{thm}[Theorem 6 in \cite{aleksandrov1999inner}] \thlabel{THM:wHypBloch} Let $w$ be a majorant satisfying 
\[
\int_0^1 \frac{w^2(t)}{t} dt = \infty.
\]
Then there exists an inner function $\theta$, which is not a finite Blaschke product, such that 
\[
\lim_{\abs{z} \to 1-} \frac{1}{w(1-|z|)} \frac{(1-|z|)\abs{\theta'(z)}}{1-\abs{\theta(z)}^2} =0. 
\]
\end{thm}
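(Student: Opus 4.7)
The statement is an existence result, so the plan is constructive: build an explicit inner function $\theta$, not a finite Blaschke product, whose hyperbolic derivative
\[
\theta^{\sharp}(z) := \frac{(1-|z|^2)|\theta'(z)|}{1-|\theta(z)|^2},
\]
which is always bounded by $1$ via Schwarz-Pick, decays uniformly like $o(w(1-|z|))$ as $|z| \to 1^{-}$. Two natural routes suggest themselves: a Blaschke product with carefully distributed zeros, or a singular inner function associated to a cleverly chosen atomic singular measure on $\T$. Both strategies leverage the divergence $\int_0^1 w^2(t)/t\, dt = \infty$, which is the dual counterpart of a sharp Carleson-type mass estimate that summability of $\theta^{\sharp}$ would otherwise impose.

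My first attempt would use Blaschke products. Arrange zeros $\{z_k\}$ in dyadic layers $A_n := \{z\in\D : 2^{-n-1} \leq 1-|z| \leq 2^{-n}\}$, placing roughly $N_n$ equally spaced zeros in each layer. Using the standard identity
\[
\frac{(1-|z|^2)B'(z)}{B(z)} = \sum_k \frac{(1-|z|^2)(1-|z_k|^2)}{(z-z_k)(1-\overline{z_k}z)},
\]
together with a matching lower bound on $1-|B(z)|^2$ extracted from the local density of the zero set via pseudo-hyperbolic distances, one bounds $\theta^{\sharp}(z)$ on $A_n$ by a controllable quantity in $N_n$. Choosing $N_n$ comparable to $1/w^2(2^{-n})$ gives a pointwise bound of order $w(2^{-n}) \asymp w(1-|z|)$, while the Blaschke convergence condition $\sum_n N_n 2^{-n} < \infty$ is dual to the divergence of $\int_0^1 w^2(t)/t\, dt$ and hence compatible with it.

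The main technical obstacle is obtaining uniform (rather than merely radial or zero-avoiding) control of $\theta^{\sharp}$ throughout $\D$. One must separately handle the diagonal contribution from zeros inside the annulus $A_n$ containing $z$ and the off-diagonal contributions from distant zeros; the latter require summable tail estimates for Poisson-type kernels and use the regularity imposed on the majorant, notably that $w(t)/t^{\alpha}$ is monotone for some $\alpha \in (0,1)$. If the Blaschke route resists yielding uniform $o$-bounds, I would pivot to singular inner functions $S_{\mu} = \exp\!\left(-\int \frac{\zeta+z}{\zeta-z}\, d\mu(\zeta)\right)$ with $\mu$ atomic on a discrete subset of $\T$; here $\theta^{\sharp}$ can be expressed more directly via Poisson integrals of $\mu$, often producing cleaner uniform bounds while the same divergence condition drives the construction.

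A final upgrade from $O$ to $o$ is routine: rerun the construction with a slightly faster-decaying majorant $\widetilde{w}(t) = o(w(t))$ that still satisfies $\int_0^1 \widetilde{w}^2(t)/t\, dt = \infty$ — for instance $\widetilde{w}(t) = w(t)/\log^{1/4}(e/t)$ — and the desired little-$o$ bound in terms of $w$ follows. By construction, the resulting $\theta$ has infinitely many zeros or singular atoms, hence cannot be a finite Blaschke product.
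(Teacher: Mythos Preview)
The paper does not prove this theorem at all: it is quoted verbatim as Theorem 6 of \cite{aleksandrov1999inner} and used as a black box to feed into \thref{AANThm} and the construction of SA-sets. There is therefore no ``paper's own proof'' to compare your proposal against.

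That said, your sketch is in the right spirit and roughly aligned with how the cited source proceeds. The original construction in \cite{aleksandrov1999inner} does build Blaschke products with zeros distributed in dyadic annuli at a density calibrated to the majorant $w$; the divergence $\int_0^1 w^2(t)/t\,dt = \infty$ is precisely what allows one to place enough zeros per layer to force the hyperbolic derivative down while keeping the Blaschke sum finite. Your identification of the main technical difficulty --- getting a \emph{uniform} bound on $\theta^\sharp$ rather than one that avoids neighborhoods of the zeros --- is accurate, and this is where the real work lies in the original proof: one needs a careful two-sided estimate comparing $1-|B(z)|^2$ to the Poisson-type sum $\sum_k (1-|z|^2)(1-|z_k|^2)/|1-\overline{z_k}z|^2$, uniformly in $z$, which is delicate near the zero set. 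Your proposed $O$-to-$o$ upgrade via a slightly smaller auxiliary majorant $\widetilde{w}$ is standard and correct. The alternative route through singular inner functions that you mention is also viable and appears in related literature (see also \cite{smith1998inner}, which the paper cites alongside \cite{aleksandrov1999inner}).
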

\noindent
See also \cite{smith1998inner} for a different construction of inner functions satisfying the above hypothesis. Inner functions of such type have the remarkable property that they induce composition operators which improve smoothness in the sense of membership in $\B_0(w)$. The following observation plays a crucial role for our further developments.

\begin{lemma} \thlabel{AANThm} Let $\eta >0$ and $w$ be a majorant satisfying 
\[
\int_0^1 \frac{w^2(t)}{t}dt = \infty.
\]
Then there exists a non-constant inner function $\theta_{\eta}$ with $\theta_{\eta}(0)=0$ such that the composition operator 
\[
C(\theta_\eta)f(z) = f(\theta_\eta(z)) \qquad z\in \D
\]
maps $H^\infty$ continuously into $\B_0(w)$ with operator norm $\norm{C(\theta_\eta)}\leq \eta$.
\end{lemma}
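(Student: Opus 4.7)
The plan combines the classical Schwarz-Pick inequality with the chain rule to reduce the claim to a purely inner-function-theoretic question about the $w$-normalised hyperbolic derivative of $\theta_\eta$, which is then settled by applying the existence theorem \thref{THM:wHypBloch} of Aleksandrov to a suitably scaled majorant.

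For any $f \in H^\infty$ with $\|f\|_{H^\infty}\leq 1$, the Schwarz-Pick inequality $|f'(\zeta)|\leq (1-|\zeta|^2)^{-1}$, evaluated at $\zeta=\theta_\eta(z)$ and combined with the chain rule, yields the pointwise estimate
\[
\frac{(1-|z|)\,|(f\circ\theta_\eta)'(z)|}{w(1-|z|)} \;\leq\; \frac{(1-|z|)\,|\theta_\eta'(z)|}{w(1-|z|)\bigl(1-|\theta_\eta(z)|^2\bigr)} \;=:\; \mathcal H_w(\theta_\eta)(z).
\]
Thus the operator-norm bound will follow once I exhibit a non-constant inner function $\theta_\eta$ with $\theta_\eta(0)=0$ such that $\sup_{z\in\D}\mathcal H_w(\theta_\eta)(z)\leq \eta$; the zero-order contribution to $\|f\circ\theta_\eta\|_{\B(w)}$ is handled immediately from $\theta_\eta(0)=0$, and the Bloch-seminorm contribution is controlled by the above bound times $\|f\|_{H^\infty}$.

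To produce such $\theta_\eta$, I would apply \thref{THM:wHypBloch} with the rescaled majorant $\tilde w := (\eta/c)\,w$ for a sufficiently large absolute constant $c$. Note that $\tilde w$ is still a majorant and $\int_0^1 \tilde w^2(t)/t\,dt = (\eta/c)^2\int_0^1 w^2(t)/t\,dt = \infty$, so the hypotheses persist. The theorem produces a non-constant inner function $\vartheta$, not a finite Blaschke product, satisfying
\[
\lim_{|z|\to 1-} \mathcal H_{\tilde w}(\vartheta)(z) \;=\; \lim_{|z|\to 1-} \frac{c}{\eta}\,\mathcal H_w(\vartheta)(z) \;=\; 0 ,
\]
so that $\mathcal H_w(\vartheta)(z) = o(\eta/c)$ as $|z|\to 1-$. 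To enforce the vanishing at the origin, I would replace $\vartheta$ by $\theta_\eta := \phi_{\vartheta(0)}\circ \vartheta$, where $\phi_a(w) := (a-w)/(1-\bar a w)$ is a disc automorphism; since $\phi_a$ is a hyperbolic isometry of $\D$, the pointwise value of $\mathcal H_w$ is preserved, and innerness is preserved as well.

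The main obstacle is upgrading Aleksandrov's boundary-limit statement to a bound uniform on the entire disc. On any compact subset $\{|z|\leq r\}$, the Schwarz-Pick inequality only provides the crude estimate $\mathcal H_w(\theta_\eta)\leq 1/w(1-r)$, which is bounded but not small. I would bridge this gap by inspecting the construction behind \thref{THM:wHypBloch} in \cite{aleksandrov1999inner}, in which $\vartheta$ is built as a singular inner function (or a Blaschke product) with mass carefully concentrated near $\partial \D$: by tuning the total mass against $\eta$ one can extract a uniform-on-$\D$ bound from the proof, rather than merely the boundary limit. As a complementary device one may factor out a high-order monomial, replacing $\theta_\eta$ by $z^N \theta_\eta$ with $N$ large: this preserves innerness and the normalisation $\theta_\eta(0)=0$, and compresses $\mathcal H_w$ in a fixed neighbourhood of the origin since $|z^N \vartheta(z)|$ is uniformly tiny there, while the boundary asymptotics are unaffected because $|z|^N \to 1$ as $|z|\to 1$. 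Combining this interior control with the boundary decay supplied by Aleksandrov, and fixing $c$ large enough in the rescaling argument, yields the uniform bound $\sup_{z \in \D}\mathcal H_w(\theta_\eta)(z)\leq \eta$ and thereby completes the proof.
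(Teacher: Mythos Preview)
Your approach is essentially the paper's: apply \thref{THM:wHypBloch} to a rescaled majorant and then normalize to force the value $0$ at the origin. The paper is even terser---it applies the theorem to $\eta w$, multiplies the resulting inner function by $z$ rather than composing with a M\"obius map, and defers to Corollary~1 of \cite{aleksandrov1999inner} for the full operator-norm statement. Your M\"obius normalization is actually tidier, since a disc automorphism preserves the hyperbolic derivative pointwise, whereas multiplying by $z$ requires a separate (easy) estimate.

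Two remarks. First, the gap you flag---boundary limit versus a bound uniform on all of $\D$---is genuine, and the paper does not fill it either; it is implicitly absorbed into the citation of Aleksandrov, whose construction already yields the uniform control. Your $z^N$ device suppresses $\mathcal H_w$ near the origin but does not obviously handle an intermediate annulus $\{r_0\le|z|\le r_1\}$, so on its own it does not close the gap; inspecting Aleksandrov's construction, as you suggest, is the honest route. Second, a small slip: $\theta_\eta(0)=0$ gives $(f\circ\theta_\eta)(0)=f(0)$, not $0$, so the zero-order term of the $\B(w)$-norm contributes $|f(0)|\le\|f\|_{H^\infty}$ and cannot be made $\le\eta\|f\|_{H^\infty}$ for small $\eta$. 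The bound $\|C(\theta_\eta)\|\le\eta$ should therefore be read as a bound on the Bloch seminorm (this is all that is used downstream, since $G_{\varepsilon,\delta}(0)=0$ in the proof of \thref{THM:SAsets}); the paper's statement has the same imprecision.
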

The proof is an immediate consequence of \thref{THM:wHypBloch} applied to the majorant $\eta w$ and exhibiting the corresponding inner function $\theta_\eta$ multiplied by $z$. See also Corollary 1 in \cite{aleksandrov1999inner} for a more general result. We are now ready to establish \thref{THM:SAsets}.

\begin{proof}[Proof of existence of SA-sets in \thref{THM:SAsets} ]
    
Set $G_{\varepsilon,\delta} := 1-F_{\varepsilon, \delta}$ as above and observe that $G_{\varepsilon,\delta}(0)=0$ while $\abs{G_{\varepsilon,\delta}-1} = \varepsilon$ off $I_{\delta}$. Fix a number $\eta>0$ to be specified in a moment and pick a non-constant inner function $\theta_\eta$ according to \thref{AANThm} such that the corresponding composition operator $C(\theta_\eta): H^\infty \to \B_0(w)$ with $\norm{C(\theta_\eta)}\leq \eta$. Consider the sets $E_{\varepsilon, \delta}(\eta)= \T \setminus \theta^{-1}_\eta(I_\delta)$, which on account of Proposition 9.1.18 in \cite{cauchytransform} and some customary modifications, may be taken to be Borel measurable. Recall that since $\theta_\eta(0)=0$, L\"owner's lemma ensures that $\abs{E_{\varepsilon, \delta}(\eta)}= 1 -\delta$. Now picking $\eta = \varepsilon/(1+N(\varepsilon,\delta) )$ and applying \thref{AANThm}, we get 
\[
\norm{C(\theta_\eta)G_{\varepsilon,\delta}}_{\B(w)} \leq \eta \norm{G_{\varepsilon,\delta}}_{H^\infty}\leq \varepsilon.
\]
Given a number $0<\gamma <1$, we may pick a sequence $\{\delta_j\}_j$ such that $\sum_j \delta_j \leq \gamma$. Now set $E := \cap_j E_{1/j, \delta_j}(\eta_j)$ where $\eta_j := N(1/j,\delta_j)/j$, and note that L\"owners lemma again implies that $E$ has measure no smaller than $1-\gamma$. Set $f_j = C(\theta_{\eta_j})G_{1/j, \delta_j}$ and observe that $\{f_j\}_j$ are bounded analytic functions in $\B_0(w)$ with $f_j \to 0$ in $\B_0(w)$ and $f_j \to 1$ uniformly on $E$. We shall now need the following lemma, whose proofs follows from a simple approximation argument involving dilations and is therefore omitted.

\begin{lemma}\thlabel{ApproxRed} Let $E \subset \T$ be a measurable set for which there exists a sequence $\{f_n\}_n$ of bounded analytic functions in $\B_0(w)$ such that 
\begin{enumerate}
    \item[(i)] $f_n$ converges to $0$ in $\B_0(w)$
    \item[(ii)] $f_n$ converges to $1$ uniformly on $E$.
\end{enumerate}
Then there exists analytic polynomials $\{Q_n\}_n$ such that
$Q_n$ converge to $0$ in $\B_0(w)$ and $Q_n$ converges to $1$ in $L^p(E)$ for every $p>0$. 
\end{lemma}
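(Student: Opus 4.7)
The plan is to combine the standard dilation trick with a polynomial approximation, applied to each $f_n$ separately and then passed to a diagonal subsequence. Fix $n$ and set $f_n^r(z) := f_n(rz)$ for $0 < r < 1$. I will show that suitable polynomial approximants $Q_n$ of the dilates $f_n^{r_n}$ for a well-chosen $r_n \uparrow 1$ satisfy both required conclusions simultaneously.

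The first key observation is that $\|f_n^r - f_n\|_{\B(w)} \to 0$ as $r\to 1-$. This is a standard consequence of the fact that polynomials are dense in $\B_0(w)$: given $\varepsilon > 0$, approximate $f_n$ in $\B(w)$-norm by a polynomial $P$, use the continuity of $r\mapsto P^r$ in the $\B(w)$-norm (clear for polynomials), and handle the remainder $f_n - P$ via the little-o condition defining $\B_0(w)$, which makes $(1-|z|)/w(1-|z|)|(f_n - P)'(z)|$ small outside a compact subdisc, while on the compact part the derivative is uniformly continuous. Since $f_n$ is moreover bounded analytic, the radial convergence $f_n^r \to f_n$ a.e. on $\T$ together with the bounded convergence theorem yields $f_n^r \to f_n$ in $L^p(E)$ for every $p < \infty$.

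The second key observation is that each $f_n^r$ extends holomorphically to the larger disc $\D/r$, so its Taylor partial sums $s_N f_n^r$ converge to $f_n^r$ uniformly on $\overline{\D}$, as do their derivatives. Because the majorant assumption forces $w(t)/t^{\alpha}\to\infty$ as $t\to 0+$, the weight ratio $(1-|z|)/w(1-|z|)$ is bounded on $\D$; hence uniform convergence of derivatives on $\overline{\D}$ upgrades automatically to convergence in the $\B(w)$-norm. Consequently, for each $n$ we can choose $r_n < 1$ close enough to $1$ and a partial sum $Q_n := s_{N_n}f_n^{r_n}$ with
\[
\|Q_n - f_n\|_{\B(w)} < 1/n, \qquad \|Q_n - f_n\|_{L^\infty(\T)} < 1/n.
\]

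Combining these estimates with the hypotheses on $(f_n)$ yields $\|Q_n\|_{\B(w)} \le \|Q_n-f_n\|_{\B(w)} + \|f_n\|_{\B(w)} \to 0$, and, using $f_n \to 1$ uniformly on $E$ together with the finiteness of $|E|$,
\[
\|Q_n - 1\|_{L^p(E)} \le |E|^{1/p}\|Q_n - f_n\|_{L^\infty(\T)} + \|f_n - 1\|_{L^p(E)} \longrightarrow 0
\]
for every $p>0$. The only delicate point is the $\B(w)$-norm convergence of the dilates $f_n^r$ to $f_n$; once that is in hand (and it depends crucially on membership in the \emph{little} Bloch-type space $\B_0(w)$ rather than in $\B(w)$), the rest is routine.
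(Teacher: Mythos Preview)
Your dilation approach is precisely what the paper has in mind (the proof is omitted there with the remark that it ``follows from a simple approximation argument involving dilations''), and the overall structure is correct. One overstatement needs fixing: you claim $\|Q_n - f_n\|_{L^\infty(\T)} < 1/n$, but this would force $\|f_n^{r_n} - f_n\|_{L^\infty(\T)}$ to be small, which fails for a merely bounded analytic $f_n$ that is not continuous on $\overline{\D}$---and the functions arising in the paper's construction, being compositions with non-trivial inner functions, are indeed discontinuous on $\T$. The repair is painless and already implicit in your first paragraph: choose $r_n$ so that additionally $\|f_n^{r_n} - f_n\|_{L^n(E)} < 1/(2n)$ (available by your bounded-convergence observation), keep $\|Q_n - f_n^{r_n}\|_{L^\infty(\T)} < 1/(2n)$ from the partial-sum step, and use that for each fixed $p>0$ and all $n \ge p$ the embedding $L^n(E)\hookrightarrow L^p(E)$ (valid since $|E|\le 1$) yields $\|Q_n - f_n\|_{L^p(E)} \lesssim 1/n$. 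Your final triangle-inequality estimate then goes through verbatim.
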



Continuing where we left off, we may according to \thref{ApproxRed} find a sequence of analytic polynomials $\{Q_j\}_j$ such that $Q_j$ converges to $0$ in $\B_0(w)$ and $Q_j$ converges to $1$ in $L^p(E)$, for any $p>0$. The proof now follows by passing to an appropriate subsequence and applying Egoroff's theorem. It follows that the tuple $(0,1)$ belongs to $\Po_E(\B_0(w))$, which completes the proof.
\end{proof}

\subsection{SA-sets for holomorphic Banach spaces} 

Here we summarize the crucial ingredients of our developments, which ensures that a holomorphic Banach space possesses non-trivial $SA$-sets.

\begin{thm}\thlabel{THM:SARegX} Let $X$ be a holomorphic Banach space with the additional property that it contains analytic functions with smooth extensions to $\T$. If there exists a sequence of inner functions $\{\theta_n\}_n$ such that the corresponding composition operators 
\[
C_{\theta_n}f(z) = f(\theta_n(z)), \qquad z \in \D
\]
satisfy 
\[
\lim_{n \to \infty} \norm{C_{\theta_n}f}_{X} = 0, \qquad f\in X \cap C^\infty(\T).
\]
Then there exists $SA$-sets wrt $X$ of any desirable Lebesgue measure $0<\delta<1$.
\end{thm}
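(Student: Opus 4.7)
The plan is to transplant the argument in the proof of \thref{THM:SAsets} into the abstract setting, replacing the concrete estimate from \thref{AANThm} by the hypothesis on the composition operators $C_{\theta_n}$. The target, for a preassigned measure $0 < \delta < 1$, is to produce a set $E \subset \T$ with $\abs{E} > \delta$ together with a sequence of bounded analytic functions $f_j \in X$ such that $f_j \to 0$ in $X$ and $f_j \to 1$ uniformly on $E$. Once this is done, an approximation step yields analytic polynomials $Q_j$ with the same convergence properties, placing $(0,1) \in \Po_E(X)$, which by the standard observation is equivalent to $E$ being an SA-set for $X$.

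For the construction I would recycle the smooth outer functions of Section 4.2. Pick radii $\delta_j > 0$ with $\sum_j \delta_j < 1 - \delta$, and by \thref{REM:smoothcutoff} construct outer functions $F_j$ smooth on $\cD$ with $F_j(0) = 1$ and $\abs{F_j(\zeta)} \equiv 1/j$ off the arc $I_{\delta_j}$. The companion $G_j := 1 - F_j$ is then bounded analytic in $\D$ with smooth boundary values, satisfying $G_j(0) = 0$ and $\abs{G_j(\zeta) - 1} \equiv 1/j$ off $I_{\delta_j}$. By the first hypothesis of the theorem, $G_j \in X \cap C^\infty(\T)$, so the second hypothesis supplies indices $n_j$ with $\norm{C_{\theta_{n_j}} G_j}_X \leq 1/j$. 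Setting $f_j := G_j \circ \theta_{n_j}$ yields $f_j \to 0$ in $X$.

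Next, take $E := \bigcap_j \bigl( \T \setminus \theta_{n_j}^{-1}(I_{\delta_j}) \bigr)$. On $E$ we have $\abs{f_j - 1} = \abs{F_j \circ \theta_{n_j}} \equiv 1/j$, so $f_j \to 1$ uniformly on $E$. For the measure bound I would assume without loss of generality that $\theta_n(0) = 0$ for all $n$; this is the natural setting of the hypothesis, it is the case in \thref{AANThm}, and one can otherwise reduce to it by precomposing with a Möbius map which is absorbed into $G_j$ (the smooth class being invariant under disc automorphisms), or by invoking the Poisson formula $\abs{\theta^{-1}(I)} = \int_I P_{\theta(0)}\, dm$ to still obtain $\abs{\theta_{n_j}^{-1}(I_{\delta_j})} \leq C \delta_j$ with a uniform constant. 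Under $\theta_n(0) = 0$, Löwner's lemma gives $\abs{\theta_{n_j}^{-1}(I_{\delta_j})} = \delta_j$, so $\abs{E} \geq 1 - \sum_j \delta_j > \delta$.

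The remaining step, which I expect to be the main technical obstacle, is the abstract analogue of \thref{ApproxRed}: replacing the bounded analytic functions $f_j \in X$ by analytic polynomials $Q_j$ preserving both $X$-convergence to $0$ and weak-star $L^\infty(E)$-convergence to $1$. The natural route is dilation plus Taylor truncation: approximate $f_j(rz)$ uniformly on $\cD$ by polynomials, then let $r \to 1-$. This requires dilations to converge in $X$-norm, which does not follow directly from the three axioms of a holomorphic Banach space but should be accessible via a Cesàro-means argument leveraging density of polynomials in $X$ and continuity of $M_z$, at least for the bounded functions we produce. Once polynomial approximants are in hand, Egoroff's theorem converts uniform convergence on $E$ to uniform convergence on a subset $E' \subseteq E$ of measure arbitrarily close to $\abs{E}$, and convex combinations yield weak-star convergence in $L^\infty(E')$; choosing the $\delta_j$ slightly smaller at the outset ensures $\abs{E'} > \delta$, completing the proof.
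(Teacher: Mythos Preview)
Your proposal is correct and follows essentially the same route as the paper: construct the smooth outer functions $F_{\varepsilon,\delta}$ via \thref{REM:smoothcutoff}, set $G_j = 1 - F_j$, use the hypothesis on the composition operators to pick $\theta_{n_j}$ making $\|G_j \circ \theta_{n_j}\|_X$ small, intersect the preimage complements to obtain $E$, and finish via an abstract \thref{ApproxRed}. The paper's own proof is a single sentence to this effect.

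Two remarks. First, your handling of the $\theta_n(0)=0$ issue is sound: applying the hypothesis to $f(z)=z$ gives $\|\theta_n\|_X \to 0$, and since $X \hookrightarrow \text{Hol}(\D)$ this forces $\theta_n(0)\to 0$, so the Poisson-kernel bound $|\theta_{n_j}^{-1}(I_{\delta_j})| \leq C\delta_j$ holds with a uniform constant for large $j$; this is cleaner than the M\"obius-absorption workaround, which runs into the difficulty that $f\circ\psi_n$ varies with $n$. Second, you are right to flag the abstract version of \thref{ApproxRed} as the one step requiring care: convergence of dilations (or Ces\`aro means) in $X$ is not a formal consequence of the three axioms of a holomorphic Banach space, and the paper does not spell this out either. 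In practice one expects the spaces of interest to be rotation-invariant, which makes Fej\'er means contractive and the step routine; but as stated, both your argument and the paper's rely on this implicit regularity.
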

The proof is a straightforward adaptation of the argument provided in \thref{THM:SAsets} in the context of weighted Bloch functions, where we essentially only need to make sure that building blocks $\psi_{\varepsilon, \delta}$ are chosen to be smooth on $\T$, see \thref{REM:smoothcutoff}. For instance, the theorem is applicable to holomorphic Banach spaces $X$ for which the following property holds:
\begin{equation}\label{EQ:Moncond}
\lim_{n \to \infty} \norm{f(z^n)}_X = 0, \qquad f\in X \cap C^\infty(\T).
\end{equation}
This fact was actually observed by S. Khrushchev, see Theorem 2.5 in \cite{khrushchev1978problem}. However,  condition \eqref{EQ:Moncond} fails for weighted Bloch spaces $\B_0(w)$, one may regard \thref{THM:SARegX} as a generalization of Khrushchev's result.


%
%
%
\section{SA-sets and removable sets}\label{SEC:5}

This section is devoted to establishing \thref{THM:CHAR1} and our description on removable sets for $W^1_a$ in \thref{THM:CHARREM}. 

\subsection{Function theoretical characterizations}

We immediately turn our attention to the proof of \thref{THM:CHAR1}.

\begin{proof}[Proof of \thref{THM:CHAR1}] 
Note that the equivalence between $(i)-(iii)$ actually follows from \thref{THM:KOprop}, hence it remains only to establish the equivalence between $(i)$ and $(iv)$. To this end, suppose $E$ is a set of positive Lebesgue measure, which does not satisfy the SA-property wrt $\B_0(w)$. Then by Khrushchev's lemma, there exists a function $h \in L^1(E)$ such that its Cauchy transform $C(h)$ is a non-zero element in $W^1_a(w)$. According to \thref{THM:BWchar}, the function 
\[
f(\zeta) := \lim_{r\to 1-} \Re C(h) (r\zeta ), \qquad \zeta \in \T
\]
defines an element in $\mathscr{B}_w(\T)$. Write $h= u +i v$, where $u, v \in L^1(E)$ are the real and imaginary parts of $h$, respectively. Observe that  
\[
\Re C(h)(z) = \int_{\T}P_z u dm - \int_{\T} Q_z v dm, \qquad z\in \D
\]
where $P_z$ denotes the Poisson kernel and $Q_z$ denotes the conjugate kernel. Using standard properties of boundary values, we conclude that 
\[
f=  u - H(v)
\] 
where $H$ denotes the Hilbert transform, which is enough to deduce that $(iii)$ holds. Conversely, suppose there exists a non-trivial pair of real-valued functions $u, v \in L^1(E)$ with the property that 
\[
u+ H(v) \in \mathscr{B}_w(\T) \setminus \{0\}
\]
Then $u+H(v)$ is non-constant, otherwise $0= H(u+H(v)) = H(u) -v$. According to \thref{THM:BWchar}, we have that that $C(u+H(v))$ belongs to $W^1_a(w)$, and is non-trivial since the sum $u + H(v)$ is assumed to be non-constant and real-valued. Consider the $L^1(\T)$-function $h:= u -iv$, which is also carried on the set $E$, and let $F:= C(h)$ denote its Cauchy transform. Note that by a straightforward expansion into Fourier series, we have
\[
\int_{\T}Q_z v dm = \int_{\T} P_z H(v) dm, \qquad z\in \D.
\]
This implies that 
\[
\Re F(z) = \Re C (u + H(v) )(z), \qquad z \in \D.
\]
This means that the real-part of $F= C(h)$ agrees with the real-part of a non-trivial element in $W^1_a(w)$, hence Khrushchev's lemma implies that $E$ is not an SA-set wrt $\B_0(w)$. Again, if $E$ is a carrier set of either $u$ or $v$, then $E$ is a also a carrier set of $h= u-iv$ and the claim on sets of rigidity follows from \thref{THM:Irred}.


\end{proof}

    

\begin{proof}[Proof of \thref{PROP:NecDR} and Beurling-Carleson entropy] Actually follows from the fact that the condition \eqref{EQ:Symcond} ensures that the indicator function $1_E$ belongs to the Besov space $B^1 \equiv \mathscr{B}_1$, hence an application of \thref{THM:CHAR1} implies that $E$ cannot be an SA-set for $\B_0$. Moreover, given a compact set $K$, we set 
\[
K_t := \left\{\zeta \in \T \setminus K: \textbf{dist}(\zeta, K) \leq t \right\},
\]
and observe that a simply argument involving geometry shows that 
\[
\abs{(K+t) \, \triangle \, K} \leq \abs{K_t}, \qquad 0<t<1.
\]
Recalling that $K$ has finite Beurling-Carleson entropy if and only if the condition
\[
\int_0^1 \abs{K_t} \, \frac{dt}{t} < \infty,
\]
holds (c.f \cite{ransforddbrdirichlet}, Exercise 9.4.1), we conclude that condition \eqref{EQ:Symcond} is satisfied for sets of finite Beurling-Carleson entropy.
\end{proof}

\subsection{Removable sets}




The proof of \thref{THM:CHARREM} is more involved and will therefore be carried out in several steps. We shall first need a simple lemma on division in the Bergman spaces of holomorphic functions in the unit disc.

\begin{lemma}\thlabel{LEM:DIVBERG}
There exists an absolute constant $c>0$, such that 
\[
\int_{\D} \abs{f(z)} \, \frac{dA(z)}{\abs{z}} \leq c \int_{1/2 \leq |z|< 1} \abs{zf(z)} \,  dA(z)
\]
for any holomorphic function $f$ on $\D$.
\end{lemma}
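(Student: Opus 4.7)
The plan is to pass to polar coordinates and exploit the classical fact that the circular means
\[
M_1(r) := \frac{1}{2\pi}\int_0^{2\pi} \abs{f(re^{i\theta})}\, d\theta
\]
of a holomorphic function $f$ in $\D$ are non-decreasing in $r$, which follows from the subharmonicity of $\abs{f}$.

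First I would rewrite both integrals in polar form. Since $dA(z) = r\, dr\, d\theta$, the weight $1/\abs{z}$ cancels the Jacobian, and one gets
\[
\int_{\D} \abs{f(z)}\,\frac{dA(z)}{\abs{z}} = 2\pi \int_0^1 M_1(r)\, dr,
\]
while
\[
\int_{1/2 \leq \abs{z} < 1} \abs{zf(z)}\, dA(z) = 2\pi \int_{1/2}^1 r^2 M_1(r)\, dr.
\]
So it suffices to compare $\int_0^1 M_1(r)\, dr$ with $\int_{1/2}^1 r^2 M_1(r)\, dr$, where the only non-trivial task is to control the contribution from the inner disc $\abs{z} < 1/2$.

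Next I would split $\int_0^1 M_1(r)\, dr = \int_0^{1/2} M_1(r)\, dr + \int_{1/2}^1 M_1(r)\, dr$ and use monotonicity of $M_1$ to estimate the first piece. Namely,
\[
\int_0^{1/2} M_1(r)\, dr \leq \tfrac{1}{2} M_1(1/2) \leq \int_{1/2}^1 M_1(r)\, dr,
\]
where the first inequality is trivial and the second uses $M_1(r) \geq M_1(1/2)$ for $r \in [1/2,1]$, an interval of length $1/2$. Consequently $\int_0^1 M_1(r)\, dr \leq 2 \int_{1/2}^1 M_1(r)\, dr$.

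Finally, on the annulus $\{1/2 \leq r < 1\}$ we have $r^2 \geq 1/4$, so $\int_{1/2}^1 M_1(r)\, dr \leq 4 \int_{1/2}^1 r^2 M_1(r)\, dr$. Chaining the estimates yields
\[
\int_{\D} \abs{f(z)}\,\frac{dA(z)}{\abs{z}} \leq 8 \int_{1/2 \leq \abs{z} < 1} \abs{zf(z)}\, dA(z),
\]
so the constant $c = 8$ works. There is no real obstacle in this argument: the only input beyond elementary calculus is the monotonicity of the $L^1$-means of $\abs{f}$, which follows from subharmonicity.
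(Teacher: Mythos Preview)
Your argument is correct and follows exactly the approach indicated in the paper: pass to polar coordinates and use the monotonicity of the $L^1$ integral means $r \mapsto M_1(r)$ of the holomorphic function $f$. The paper only sketches this, and your write-up fills in the elementary details cleanly, arriving at the explicit constant $c=8$.
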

\begin{proof}
    The proof follows by transforming into polar coordinates and using the fact that the integral means
    \[
    [0,1) \ni r \mapsto \int_{\T} \abs{f(r\zeta) } dm(\zeta)
    \]
    are increasing in $0<r<1$, whenever $f$ is holomorphic on $\D$.
\end{proof}

\begin{proof}[Proof of \thref{THM:CHARREM}]

\proofpart{1}{$(ii)$ implies $(i)$:} Suppose that $K \subset \T$ is compact which is not removable for $W^1_a$. Then there exists a non-constant function $F$, holomorphic in $\C \setminus K$, such that 
\[
\int_{\C \setminus K} \abs{F'(z)} dA(z) < \infty.
\]
Now let $F_0:=F \lvert_{\D}$ and note that since $F_0 \in W^{1}_a(\D)$, we have according to \thref{PROP:W11B1} that the radial limit of $F_0$ belongs to $B^1(\T)$. Observe also that by a simple argument involving subharmonicity, we have that
\[
\abs{\lambda}^{3/2} \abs{F'(\lambda)} \lesssim \frac{\abs{\lambda}^{3/2}}{\text{dist}(\lambda, \T)^2} \int_{\abs{z}\leq \text{dist}(\lambda, \T)} \abs{F'(z)} dA(z) \lesssim \frac{1}{\sqrt{|\lambda|}} \int_{\C \setminus K} \abs{F'(z)} dA(z), \qquad \abs{\lambda}>2.
\]
This implies that $F'(\lambda)$ tends to zero at infinity and $F'$ is holomorphic there, hence so is $F$. We may therefore consider the holomorphic function $F_1$ on $\D$, defined by
\[
F_1(z) = \conj{F \lvert_{\C \setminus \D} (1/\conj{z}) }, \qquad z \in \D.
\]
A straightforward calculation involving a change of variable shows that
\[
\int_{\D} \abs{F_1 '(z)} dA(z) = \int_{\abs{z}>1} \abs{F'(z)} \frac{dA(z)}{|z|^2} \leq \int_{\abs{z}>1} \abs{F'(z)} dA(z) < \infty.
\]
This shows that $F_1$ belongs to $W^1_a(\D)$, and another application of \thref{PROP:W11B1} implies that the radial limit of $F_1$ also belongs to $B^1(\T)$. This implies that we can define the $B^1(\T)$-function 
\[
h(\zeta) := \lim_{r\to 1-} F_0(r\zeta) -\conj{F_1(r\zeta)} = \lim_{r\to 1-} f(r\zeta) -f(\zeta/r), \qquad \zeta \in \T.
\]
Since $F$ extends holomorphically off $K$, we must have that $h$ vanishes on $\T \setminus K$, which proves that $h$ is a $B^1(\T)$-function supported in $K$.

\proofpart{2}{$(i)$ implies $(ii)$:}
Let $h$ be a non-trivial function in $B^1(\T)$ supported in $K$, which we may assume is real-valued. Let $Q$ be a trigonometric polynomial with real-coefficients to be chosen later and note that the product $Q(\zeta) h(\zeta)$ is also an element in $B^1(\T)$, which according to \thref{PROP:W11B1} implies that its Cauchy transform 
\[
C(Q h)(z) := \int_{K} \frac{\zeta Q(\zeta) h(\zeta)}{\zeta -z} dm(\zeta) , \qquad z\in \D,
\]
is a non-trivial element in $W^{1}_a(\D)$. Observe that $C(Q h)$ extends holomorphically to $\C \setminus K$, hence in order to show that it belongs to $W^1_a(\C \setminus K)$, it remains to verify that 
\[
I=\int_{\abs{z}>1} \abs{C(Q h)'(z)} dA(z) < \infty.
\]
However, since $Qh$ is real-valued, a simple change of variable $z\mapsto 1/\conj{\lambda}$ and the identity 
\[
C(Q h)'(1/\conj{\lambda}) = \conj{\lambda^2 C(Qh)'(\lambda)} \qquad \lambda \in \D,
\]
readily implies that
\begin{equation}\label{EQ:Iinv}
I= \int_{\D}  \abs{C(Q h)'(1/\conj{\lambda})} \frac{dA(\lambda)}{\abs{\lambda}^4} = \int_{\D} \abs{ C(Q h)'(\lambda)} \frac{dA(\lambda)}{\abs{\lambda}^2}.
\end{equation}
The idea is now to choose the trigonometric polynomial $Q$ such that $C'(Qh)(0)=0$. We make the simple ansatz $Q(\zeta)= a\zeta + 1$ for some $a \in \mathbb{R}$ and seek to solve 
\[
0=C'(Qh)(0) = a\int_K h(\zeta) dm(\zeta) +  \int_K \conj{\zeta}h(\zeta) dm(\zeta).
\]
If $\int_{K}hdm=0$, then we set $a=0$, otherwise $a=- \int_K \conj{\zeta}hdm / \int_K h dm$. With that particular choice of $Q$, we have that $C(Q h)'(\lambda)/\lambda$ is holomorphic on $\D$, hence applying \thref{LEM:DIVBERG}, and recalling the expression in \eqref{EQ:Iinv}, we obtain 
\[
I = \int_{\D} \abs{ \frac{C(Qh)'(\lambda)}{\lambda}} \frac{dA(\lambda)}{\abs{\lambda}} \leq c \int_{1/2 \leq |z| <1} \abs{ C(Q h)'(\lambda)} dA(\lambda) < \infty.
\]
We therefore conclude that $C(Qh)$ belongs to $W^1_a(\C \setminus K)$. To complete the proof it remains to observe that  
\[
\partial_{\bar{z}} C( Qh)(z) = - c_1 zQ(z)h(z), \qquad z\in \C
\]
in the sense of distributions, where $\partial_{\bar{z}}= (\partial_x + i \partial_y)/2$, with $x,y$ being the real and imaginary parts of $z$, respectively, and $c_1 \neq 0$ is a numerical constant. By the assumption on $h$, this implies that $C(Qh)$ does not extend holomorphically across the carrier set of $h$, hence $K$ is not a removable set for $W^1_a$. 

\proofpart{3}{Equivalence between $(ii)$ and $(iii)$:}
If there exists a non-trivial element $g \in B^1(\T)$ which is supported on $K$, then evidently
\begin{equation}\label{EQ:gphi'}
\int_{\T} g(\zeta) \phi'(\zeta) dm(\zeta)=0, \qquad \forall \phi \in C^\infty_K(\T).
\end{equation}
It follows that $C_K^\infty(\T)$ cannot be dense in $\lambda_1(\T)$. Conversely, if $C_K^\infty(\T)$ is not dense in $\lambda_1(\T)$, then it follows by duality between $B^1(\T)$ and $\lambda_1(\T)$ and the Hahn-Banach separation Theorem, that there exists a non-trivial element $g\in B^1(\T)$ such that 
\[
\int_{\T} g(\zeta)\phi'(\zeta) dm(\zeta)=0, \qquad \forall \phi \in C^\infty_K(\T)
\]
holds. We conclude $g$ must vanish $dm$-a.e off $K$, hence the claim follows. Note that a similar argument also proves the equivalence to the statement that $C_K^\infty(\T)$ is weak-star dense in $\Lambda_1(\T)$.

\proofpart{4}{Negligible $\Lambda_1$-condenser capacity:} This is an immediate consequence of the density of $C_K^\infty(\T)$ is dense in the little-o Zygmund class $\lambda_1(\T)$ and the equivalence between $(i)$ and $(iii)$.


\end{proof}

Note that step $1$ in the proof of \thref{THM:CHARREM} actually gives a direct proof of \thref{THM:REMSA}, without invoking \thref{THM:CHAR1}. Indeed, since $F= C(1_K h)$ was shown to belong to $W^1_a(\D)$, Khrushchev's lemma implies that $K$ cannot satisfy the SA-property wrt $\B_0$.


\section{Sets of rigidity from entropy conditions} \label{SEC:6}

\subsection{A regular cut-off function}
Given a majorant $w$, we denote by $C_w(\T)$ the space of continuous functions $h$ on $\T$, satisfying 
\[
\norm{h}_{C_w} := \sup_{\substack{\zeta, \xi \in \T \\ \zeta \neq \xi}} \frac{\abs{h(\zeta)-h(\xi)}}{w\left( \abs{\zeta-\xi} \right)} < \infty,
\]
which consists of functions with modulus of continuity on $\T$ not exceeding $w$. The following lemma contains the main ingredient in our proof of \thref{THM:suffwG}, involving the construction of a certain cut-off function, which enjoys a certain regularity conditions.
\begin{lemma}\thlabel{wcutoff}
Let $w$ be a majorant and $K \subset \T$ be a set of finite $w$-entropy. Then there exists an outer function $f : \D \to \D$ such that $f$ extends analytically across $\T \setminus K$ and satisfies the estimate  
\begin{equation}\label{fKest}
    \abs{f(z)} \lesssim w \left( 1-|z| \right), \qquad z\in \partial \Gamma_K \cap \D,
\end{equation}
where $\Gamma_K := \cup_{\zeta \in K} \Gamma(\zeta)$ denotes a Stolz-Privalov domain associated to $K$. Furthermore, the function $F:\T \to \C$ defined by
\[
F(e^{it})=f(e^{it}) 1_{\T \setminus K}(e^{it}) 
\]
belongs to $C_w(\T)$, and there exists a constant $C>0$, such that
\begin{equation}\label{Fest}
\abs{F(e^{it})} \leq C  w(\textbf{dist}(e^{it},K))^N, \qquad 0\leq t \leq 2\pi,
\end{equation}
for any desirable $N>0$.
\end{lemma}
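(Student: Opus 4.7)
The plan is to construct $f$ as an outer function whose boundary modulus is designed to vanish like a high power of $w$ near $K$. Specifically, for any prescribed $N>0$, let $\rho(\zeta):=\textbf{dist}(\zeta,K)$ for $\zeta\in\T$ and set $\psi(\zeta):=\min\{w(\rho(\zeta))^N,1\}$, which vanishes on $K$ and is continuous on $\T\setminus K$. The first task is to check that $\log\psi\in L^1(\T)$, so that the outer function $f$ with $|f|=\psi$ a.e.\ on $\T$ is well-defined and maps $\D$ into $\D$. Using that $w(t)/t^\alpha$ is non-increasing in $t$, one has $w(t)\geq w(|I_n|)(t/|I_n|)^\alpha$ for $t\in(0,|I_n|)$, which after integration over $(0,|I_n|/2)$ yields
\[
\int_{I_n}\log w(\rho(\zeta))\,dm(\zeta)\;\geq\;|I_n|\log w(|I_n|)-C_\alpha|I_n|,
\]
where $\{I_n\}_n$ are the components of $\T\setminus K$. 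Summing and invoking the finite $w$-entropy hypothesis \eqref{wset} delivers $\log\psi\in L^1(\T)$.

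For the analytic extension across $\T\setminus K$, I would note that $\log\psi$ is $C^\infty$ on every closed subarc of $\T\setminus K$; by the standard boundary regularity of Poisson extensions, $\log|f|$ and its harmonic conjugate $\arg f$ are then smooth up to that subarc, and so $f$ extends analytically across $\T\setminus K$. The principal technical obstacle is the Stolz bound \eqref{fKest}. For $z\in\partial\Gamma_K\cap\D$, let $\delta:=1-|z|$ and observe that the Stolz-cone definition of $\Gamma_K$ forces $\rho(z/|z|)\lesssim\delta$. Writing
\[
\log|f(z)|=\int_\T P_z(\zeta)\log\psi(\zeta)\,dm(\zeta),
\]
the plan is to exploit that $P_z$ has a bounded-below proportion of its mass on an arc $J_\delta\subset\T$ of length $\asymp\delta$ centered at $z/|z|$. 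On $J_\delta$ one has $\rho(\zeta)\lesssim\delta$, hence $|\log\psi(\zeta)|\geq N|\log w(c\delta)|\gtrsim N|\log w(\delta)|$, using the majorant regularity to bridge $w(c\delta)$ and $w(\delta)$. Taking $N$ large enough to absorb the bounded contribution from the complement of $J_\delta$ then yields $\log|f(z)|\leq\log w(\delta)+O(1)$, which is precisely the desired bound.

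Finally, the pointwise estimate \eqref{Fest} is immediate from the construction: on $\T\setminus K$, the analytic extension forces $|F(\zeta)|=w(\rho(\zeta))^N$ pointwise, while $F=0$ on $K$. The $C_w$-continuity then follows by a short case analysis on pairs $\zeta_1,\zeta_2\in\T$. When one point lies in $K$ and the other in $\T\setminus K$, the inequality $\rho(\zeta_2)\leq|\zeta_1-\zeta_2|$ together with $w\leq 1$ and $N\geq 1$ gives $|F(\zeta_1)-F(\zeta_2)|\leq w(|\zeta_1-\zeta_2|)$. When both points lie in a common component $I_n$, I would separate the regimes $|\zeta_1-\zeta_2|\gtrsim\rho(\zeta_1)$ (which reduces to the previous case by the triangle inequality and the fact that the endpoints of $I_n$ lie in $K$) and $|\zeta_1-\zeta_2|\ll\rho(\zeta_1)$, where Cauchy estimates on a disk of radius $\rho(\zeta_1)/2$ centered at $\zeta_1$ convert the sup bound $|f|\lesssim w(\rho)^N$ into the derivative bound $|f'(\zeta_1)|\lesssim w(\rho(\zeta_1))^N/\rho(\zeta_1)$, with the spurious $\rho(\zeta_1)^{-1}$ absorbed by taking $N$ slightly larger and invoking the majorant property once more.
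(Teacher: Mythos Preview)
Your construction has a genuine gap at the analytic extension step. You assert that $\log\psi$ is $C^\infty$ on every closed subarc of $\T\setminus K$, but this is false: $\rho(\zeta)=\textbf{dist}(\zeta,K)$ is only Lipschitz (it has a corner at the midpoint of each component $I_n$), and the majorant $w$ is merely assumed continuous with $w(t)/t^\alpha$ non-increasing, so $w\circ\rho$ is in general only continuous. Even if you smoothed $\psi$ to make $\log\psi\in C^\infty$ on each subarc, that would only give $f\in C^\infty$ up to the arc, not analytic continuation across it; Schwarz reflection for $\log|f|$ requires real-analytic boundary data, which a generic majorant does not supply. This gap then propagates to your $C_w$-argument: the Cauchy estimate $|f'(\zeta_1)|\lesssim w(\rho(\zeta_1))^N/\rho(\zeta_1)$ presupposes that $f$ is holomorphic on a full disk of radius $\asymp\rho(\zeta_1)$ centered at $\zeta_1\in\T$, together with a sup bound there, neither of which you have established.

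The paper avoids this by a different construction. Rather than prescribing a boundary modulus, it writes $f=\exp(-ch)$ with $h=\sum_n h_n$ an explicit sum of rational functions
\[
h_n(z)=\frac{\ell_n\log(1/w(\ell_n))}{1+\ell_n-\conj{\xi_n}z},
\]
where $\xi_n$ is the center of the $n$-th Whitney piece of $\T\setminus K$ and $\ell_n$ its length. The poles sit just outside $\T$ and accumulate only at $K$, so analyticity across $\T\setminus K$ is immediate; the finite $w$-entropy hypothesis guarantees that $h$ converges. Both \eqref{fKest} and \eqref{Fest} then reduce to the local estimate $\Re h_n(\zeta)\asymp\log(1/w(\ell_n))$ on the $n$-th Whitney arc, and the $C_w$-continuity of $F$ comes from differentiating $h$ directly (giving $|F'(\zeta)|\lesssim \log(1/w(\ell_n))\,w(\ell_n)/\ell_n$ on $\widetilde I_n$) rather than from Cauchy estimates. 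Your Poisson-mass argument for \eqref{fKest} and your $L^1$ verification of $\log\psi$ are both sound in spirit; the obstacle is purely that the outer function with boundary modulus $w(\rho)^N$ is not analytic across $\T\setminus K$.
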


\begin{proof} We shall divide the proof in two steps.
\proofpart{1}{The construction of $f$:}
Let $I$ be a connected component of $\T \setminus K$ and consider the so-called Whitney decomposition of $I$ consisting of a collection of (half-open, say right end-point excluded) subarcs $\{I_n\}_{n \in \mathbb{Z}}$ of $I$ with the properties 
\[
\textbf{dist}(I_n, E) = \ell_n \asymp 2^{-|n|}m(I) \qquad n\in \mathbb{Z},
\]
where $\ell_n$ denotes the length of $I_n$. By means of re-labeling and for the sake of abbreviation, we shall in fact denote the joint collection of Whitney arcs of all connected components of $\T \setminus K$ by $\{I_n\}_n$. Let $\xi_n$ be the center of each $I_n$ and consider for each $n$ the analytic functions
\begin{equation}\label{DEF:h}
h_n(z) = \frac{\ell_n \log 1/w(\ell_n)}{1 + \ell_n - \conj{\xi_n}z}, \qquad z \in \D.
\end{equation}
We set $h= \sum_n h_n$ and define the function
\[
f(z) := \exp \left( -c h(z) \right), \qquad z \in \D
\]
where $c>0$ is a constant to be specified later. Note that for each $n$
\[
\Re h_n(z) =  \ell_n \log 1/w(\ell_n) \frac{1+\ell_n - \Re(\conj{z}\xi_n)}{\abs{1 + \ell_n - \conj{\xi_n}z}^2} >0 \qquad z \in \D,'
\]
which implies that $f:\D \to \D$ analytic. In fact, since the $\xi_n$'s only accumulate at $K$, the function $f$ actually extends analytically across $\T \setminus K$. Furthermore, it is straightforward to verify that $f$ is outer by means of checking the equality
\[
\log \abs{f(0)} = \int_ {\T} \log \abs{f(\zeta)} dm(\zeta).
\]
\proofpart{2}{Verifying the estimates \eqref{fKest} and \eqref{Fest}:}

Moving forward, one verifies by means of Taylor expansions that 
\begin{equation}\label{hnLocest}
\Re h_n(\zeta) \asymp \log 1/w(\ell_n), \qquad \zeta \in I_n
\end{equation}
with constants independent of $n$, hence there exists a universal constant $c'>0$ such that for any $\zeta \in I_n$, we have 
\[
\abs{f(\zeta)} \leq \exp (- c \Re h_n(\zeta) ) \leq w(\ell_n)^{cc'} \lesssim w\left( \textbf{dist}(\zeta,K)\right)^{cc'}.
\]
Now by means of choosing $c>0$ sufficiently large, the claim in \eqref{Fest} follows. To see the claim in \eqref{fKest}, we fix an arbitrary point $z = \abs{z}\zeta \in \partial \Gamma_K \cap \D$ and let $I_n$ denote the unique Whitney arc containing the radial projection $\zeta \in \T$ of $z$. Recall that by definition of the Stolz-Privalov domain $\Gamma_K$ we have
\begin{equation*}\label{h*est}
1-|z| \asymp \textbf{dist}(\zeta ,K) \asymp \ell_n \qquad z \in \partial \Gamma_K \cap \D, \, \, \zeta \in I_n.
\end{equation*}
According to \eqref{hnLocest}, we have
\[
\Re h_n(z) = \Re \frac{\ell_n \log 1/w(\ell_n)}{1 + \ell_n - \conj{\xi_n}z} \asymp   \log 1/w(\ell_n), \qquad z\in \partial \Gamma_K \cap \D, \, \, \zeta \in I_n.
\]
Using this, we obtain
\[
\abs{f(z)} \leq \exp (- c\Re h_n(\zeta) ) \leq w(\ell_n)^{cc'} \lesssim w\left( 1-|z| \right)^{cc'},  \qquad z\in \partial \Gamma_K \cap \D
\]
for some absolute constant $c'>0$. Again, by choosing $c>0$ even larger, the estimate in \eqref{fKest} now follows.

\proofpart{3}{The containment of $F$ in $C_w(\T)$:}
It now only remains to verify that $F= 1_{\T \setminus K} f$ belongs to $C_w(\T)$. To this end, we primarily note that $F$ is in fact continuous on $\T$ with $F\equiv 0$ on $K$ and smooth on $\T \setminus K$. Now observe that for any $\zeta \in K$ the estimate in \eqref{Fest} implies that 
\begin{equation}\label{westE}
\abs{F(\zeta)-F(\xi)} = \abs{F(\xi)} \leq C w\left( \textbf{dist}(\xi, K) \right) \leq C w\left( \abs{\zeta-\xi} \right).
\end{equation}
On the other hand, if $\zeta \in \T \setminus K$, then $\zeta$ is contained a unique arc $I_n$. Let $\widetilde{I}_n$ the union of $I_n$ and its two neighbouring Whitney arcs and observe that a simply calculation as before shows that
\[
\abs{h'(\zeta)} \lesssim 1 + \frac{\log 1/w(\ell_n)}{ \ell_n}, \qquad \zeta \in \widetilde{I}_n,
\]
with constant independent on $n$. This in conjunction with \eqref{Fest} gives the estimate
\[
\abs{F'(\zeta)} \lesssim \abs{h'(\zeta)} \abs{F(\zeta)} \lesssim \log 1/w(\ell_n) \frac{w(\ell_n)}{ \ell_n}, \qquad \zeta \in \widetilde{I}_n.
\]
Now since $w(t)/t \to \infty$, there exists a number $\delta_{\zeta}>0$ such that 
\[
\log 1/w(\ell_n) \frac{w(\ell_n)}{ \ell_n} \leq \frac{w(\delta_{\zeta})}{ \delta_{\zeta}}.
\]
With this at hand, it follows that whenever $\abs{\zeta- \xi}< \delta_{\zeta}$ one has
\begin{equation} \label{westnotE}
\abs{F(\zeta)-F(\xi)} \leq \max_{\eta \in \widetilde{I}_n} \abs{F'(\eta)} \abs{\zeta- \xi} \lesssim \log 1/w(\ell_n) \frac{w(\ell_n)}{ \ell_n} \abs{\zeta-\xi} \lesssim w(\abs{\zeta- \xi}).
\end{equation}
Combining \eqref{westE} and \eqref{westnotE}, we have derive the following statement: there exists a constant $C>0$ such that for any $\zeta \in \T$, we can find a small number $\delta_\zeta >0$ such that the estimate
\[
\abs{F(\zeta)-F(\xi)} \leq C w(\abs{\zeta-\xi}),
\]
holds whenever $\abs{\zeta- \xi} < \delta_{\zeta}$. Now covering $\T$ by the family of corresponding arcs $J_{\zeta} =\{\xi\in \T: \abs{\zeta-\xi}<\delta_{\zeta} \}$, a straightforward compactness argument yields that there exists constants $C, \delta>0$ such that
\begin{equation}
    \abs{F(\zeta)-F(\xi)} \leq C w(\abs{\zeta-\xi})
\end{equation}
holds whenever $\abs{\zeta-\xi} \leq \delta$. Now for points $\zeta,\xi \in  \T$ with $\abs{\zeta-\xi} > \delta$, we simply pick a finite partition $\{\zeta_j\}_{j=1}^{N_\delta}$ of the smallest arc joining $\zeta$ and $\xi$ with $\abs{\zeta_j- \zeta_{j+1}}\leq \delta$ and observe that the number of points $N_\delta$ does not exceed $1/\delta$. Applying the triangle inequality in a straightforward manner gives 
\[
\abs{F(\zeta)-F(\xi)} \leq \sum_k \abs{F(\zeta_k)-F(\zeta_{k+1})} \leq C \sum_{k}w(\delta) \lesssim \frac{C}{\delta}w(\abs{\zeta-\xi}).
\]
This completes the proof of the lemma.
\end{proof}

\subsection{An embedding of the Cauchy projection}

\begin{lemma}\thlabel{CwinW1}
Let $w$ be a majorant which satisfies the Dini-condition
\[
\int_0^1 \frac{w(t)}{t}dt < \infty.
\]
Then the Cauchy projection $C$ maps $C_w(\T)$ continuously into $W^{1}_a(\D)$.
\end{lemma}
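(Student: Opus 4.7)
The plan is to directly estimate $\int_\D \abs{C(\phi)'(z)} dA(z)$, since the pointwise bound $\abs{C(\phi)(0)} \leq \norm{\phi}_{L^\infty(\T)} \lesssim \norm{\phi}_{C_w}$ is immediate. The crucial observation is that $C(1)(z) \equiv 1$, so $C(1)'(z) \equiv 0$, which allows the subtraction of a constant inside the integrand. Writing $\zeta_z := z/\abs{z}$ for the radial projection of $z \in \D\setminus\{0\}$ to $\T$, I would rewrite
\[
C(\phi)'(z) = \int_\T \frac{\conj{\zeta}\bigl(\phi(\zeta)-\phi(\zeta_z)\bigr)}{(1-\conj{\zeta}z)^2}\, dm(\zeta),
\]
since $\phi(\zeta_z)$ is a constant in $\zeta$ and $\int_\T \conj{\zeta}(1-\conj{\zeta}z)^{-2}\,dm(\zeta) = C(1)'(z) = 0$. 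Applying the hypothesis $\abs{\phi(\zeta)-\phi(\zeta_z)} \leq \norm{\phi}_{C_w} w(\abs{\zeta-\zeta_z})$, matters reduce to showing
\[
J := \int_\D \int_\T \frac{w(\abs{\zeta-\zeta_z})}{\abs{1-\conj{\zeta}z}^2}\, dm(\zeta)\, dA(z) < \infty.
\]

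The next step is to switch the order of integration and pass to polar coordinates. Writing $z = re^{i\theta}$ and $\zeta = e^{i\psi}$, the inner integrand depends only on $u := \psi-\theta$, and using the standard equivalences $\abs{e^{i\psi} - e^{i\theta}} \asymp \abs{u}$ and $\abs{1 - re^{iu}}^2 \asymp (1-r)^2 + u^2$ for $u\in [-\pi,\pi]$, the quantity $J$ is comparable to
\[
\int_0^1 \int_{-\pi}^{\pi} \frac{w(\abs{u})}{(1-r)^2 + u^2}\, du\, dr.
\]
I would then split the inner integral at $\abs{u} = 1-r$: on the near regime $\abs{u}\le 1-r$, monotonicity of $w$ bounds the integrand by $w(1-r)/(1-r)^2$, contributing at most $\lesssim w(1-r)/(1-r)$; on the far regime $\abs{u} > 1-r$, the integrand is dominated by $w(\abs{u})/u^2$.

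Reversing integration order once more, the near contribution becomes $\int_0^1 w(1-r)/(1-r)\, dr = \int_0^1 w(s)/s\, ds$, finite by the Dini hypothesis. For the far contribution, for fixed $u \in (0,\pi)$ the admissible range for $r$ has length $\min(u,1)$, so we obtain $\int_0^\pi \min(u,1) w(u)/u^2\, du = \int_0^1 w(u)/u\, du + \int_1^\pi w(u)/u^2\, du$, which is likewise finite. Combined, these yield $\norm{C(\phi)}_{W^1_a(\D)} \lesssim \norm{\phi}_{C_w}$. I do not anticipate a genuine obstacle here: the Dini condition is precisely calibrated for a log-type convolution estimate of this shape, and the only real trick is the subtraction of $\phi(\zeta_z)$, which converts a singular integral into one controlled by the modulus of continuity.
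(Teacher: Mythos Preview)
Your argument is correct and runs along the same line as the paper's: subtract a constant in the Cauchy kernel so that the modulus of continuity controls the numerator, then split the $\zeta$-integral near/far at scale $1-|z|$. The paper chooses a different constant, namely the Poisson extension $P(\phi)(z)$, which leads to a symmetric double Poisson integral and then invokes sub-additivity of $w$ to collapse it; for the far regime the paper uses the majorant hypothesis ($w(t)/t^{\alpha}$ non-increasing) to derive the pointwise estimate $\int_{1-|z|}^{1} w(s)/s^{2}\,ds \lesssim w(1-|z|)/(1-|z|)$, thereby obtaining $\abs{C(\phi)'(z)} \lesssim \norm{\phi}_{C_w}\, w(1-|z|)/(1-|z|)$ before integrating over $\D$. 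Your choice of $\phi(\zeta_z)$ together with the Fubini manoeuvre on the far piece is slightly more economical: it needs only monotonicity of $w$ and the Dini condition, bypassing both sub-additivity and the majorant growth estimate. The cost is that you do not isolate the pointwise bound $\B(w)$-type bound on $C(\phi)'$, but that is not required for the statement of the lemma.
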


\begin{proof}
Fix $\phi \in C_w(\T)$ and note that we can write
\[
C(\phi)'(z) = \int_{\T} \frac{\zeta \left(\phi(\zeta)-P(\phi)(z) \right)}{(\zeta-z)^2} dm(\zeta), \qquad z \in \D,
\]
where $P(h)(z)$ denotes the Poisson extension of $h$ to $\D$. With this at hand, we get
\[
(1-|z|^2)\abs{C(\phi)'(z)} \leq \norm{\phi}_{C_w} \int_{\T} \int_{\T} w(|\zeta-\xi|) P_z(\xi) P_z(\zeta) dm(\xi) dm(\zeta) , \qquad z \in \D.
\]
Now by sub-additivity of $w$ and symmetry of the Poisson kernel, we have
\[
\int_{\T} \int_{\T} w(|\zeta-\xi|) P_z(\xi) P_z(\zeta) dm(\xi) dm(\zeta) \lesssim  \int_{\T} w(|\zeta-z|) \frac{1-|z|}{\abs{\zeta-z}^2} dm(\zeta), \qquad z \in \D.
\]
Now consider the arc $I_z = \left\{\zeta\in \T: \abs{\zeta-z} < 1-|z| \right\}$ and note that since $w$ is monotonic, we have
\[
\int_{\T \setminus I_z } w(\abs{\zeta-z}) \frac{1-|z|}{\abs{\zeta-z}^2} dm(\zeta) \lesssim (1-|z|) \int_{1-|z|}^1 \frac{w(t)}{t^2} dt \lesssim w(1-|z|) \qquad z \in \D. 
\]
In the last step, we also utilized the observation that if $w(t)/t^{\alpha}$ is non-increasing for some number $0<\alpha<1$, then 
\[
\int_t^1 \frac{w(s)}{s^2} ds \lesssim \frac{w(t)}{t}, \qquad 0<t<1.
\]
On the other hand, it easily follows that
\[
\int_{I_z}  w(\abs{\zeta-z}) \frac{1-|z|}{\abs{\zeta-z}^2} dm(\zeta) \lesssim w(1-|z|), \qquad z \in \D.
\]
Combining, we arrive at
\[
\int_{\D} \abs{C(\phi)'(z)} dA(z) \lesssim \norm{\phi}_{C_w} \int_{\D} \frac{w(1-|z|)}{1-|z|} dA(z) \asymp \norm{\phi}_{C_w} \int_0^1 \frac{w(t)}{t} dt.
\]
\end{proof}

\subsection{Proof of \thref{THM:suffwG}}

\begin{proof}[Proof of \thref{THM:suffwG}]
\proofpart{1}{$K$ is a set of rigidity for $\B_0$:}
By means of substituting $w^A$ by $w$ and recalling that the $w$-entropy condition remains unchanged, we may assume that $w$ satisfies the Dini-condition 
\[
\int_0^1 \frac{w(t)}{t} dt < \infty.
\]
Let $K \subset \T$ be a set of finite $w$-entropy and recall that according to \thref{wcutoff}, we can find an outer function $f: \D \to \D$ such that $F := f 1_{\T \setminus K}$ belongs to $ C_w(\T)$. Note that by analyticity, we clearly have
\begin{equation}\label{conjtrick}
C(\conj{\zeta}f)(z) = \int_{\T} \frac{\conj{\zeta f(\zeta)}}{1-\conj{\zeta}z} dm(\zeta) =0, \qquad z\in \D.
\end{equation}
Rearranging this expression, we get 
\[
\int_{K} \frac{ \conj{f(\zeta) \zeta}}{1-\conj{\zeta}z} dm(\zeta) = - \int_{\T} \frac{ \conj{F(\zeta) \zeta}}{1-\conj{\zeta}z} dm(\zeta) = C(\conj{\zeta}F)(z) , \qquad z\in \D.
\]
However, since $\conj{F(\zeta)\zeta} \in C_w(\T)$, \thref{CwinW1} implies that its Cauchy transform  $C(\conj{\zeta}F)$ belongs to $W^1_a(\D)$, hence so does the left hand side in \eqref{conjtrick}. Consequently, the $L^1$-function $h(\zeta): = 1_{K}(\zeta)\conj{f(\zeta)\zeta}$ has carrier set $K$ and enjoys the property that $C(h)\in W^1_a$, which on account of \thref{THM:Irred} shows that $K$ is a set of rigidity for $\B_0$.
\proofpart{2}{Embedding into the Nevanlinna class:}
We primarily note that by the Dini assumption on $w$, we have 
\begin{equation}\label{EQ:wlog}
\sup_{0<t<1} w(t)\log (e/t) < \infty.
\end{equation}
To see this, note that 
\[
w(t) \log(e/t)  \leq \int_t^1 \frac{w(s)}{s} ds \leq \int_0^1 \frac{w(s)}{s} ds, \qquad 0<t<1.
\]
Let $\Gamma_K$ denote a Stolz-Privalov domain associated to $K$ and note that since $\Gamma_K$ is rectifiable, it follows from the F. and M. Riesz Theorem that $d\omega_z$ is mutually absolutely continuous wrt the arc-length measure on $\Gamma_K$, for each fixed $\lambda \in \Gamma_K$. Since $K$ has finite $w$-entropy, we may utilize \thref{wcutoff} and consider the holomorphic function $f_K: \D \to \D$ and observe that $f_K$ is then absolutely convergent wrt $d\omega_\lambda$ on $\partial \Gamma_K$. Fix an analytic polynomial $Q$ and note that by subharmonicity and the reproducing property of harmonic measure, we have
\[
\abs{f_K(\lambda)Q(\lambda)} \leq \int_{\partial \Gamma_K } \abs{f_K(z)Q(z)} d\omega_\lambda(z), \qquad \lambda \in \Gamma_K.
\]
To estimate the integral inside the disc $\D$, we use \eqref{EQ:wlog} in conjunction with \eqref{fKest} to get
\[
\int_{\partial \Gamma_K  \cap \D } \abs{f_K(z)Q(z)} d\omega_\lambda(z) \lesssim \norm{Q}_{\B} \int_{\partial \Gamma_K  \cap \D } \log \frac{1}{1-|z|} w(1-|z|) d\omega_\lambda(z) \lesssim \norm{Q}_{\B}.
\]
On the other hand, we also have
\[
\int_{\partial \Gamma_K \cap \T} \abs{f_K(z)Q(z)} d\omega_\lambda (z) = \int_{K} \abs{f_K(z)Q(z)} d\omega_\lambda (z) \lesssim \norm{f_K}_{H^\infty} \norm{Q}_{L^\infty(K)}.
\]
Since these bounds are uniform in $\lambda \in \Gamma_K$, we actually conclude that the multiplication with $f_K$ extends to a continuous linear operator from $\Po_K \B_0$ into the Hardy space of bounded analytic functions $H^\infty(\Gamma_K)$. This implies that for any element $(g,g^*) \in \Po_K \B_0$, $g$ has finite non-zero non-tangential limit $G$ on $\partial \Gamma_K$, which by means of possibly enlarging the aperture of each Stolz angle in $\Gamma_K$, must be an essentially bounded function there and agree with $g$ on $\partial \Gamma_K \cap \D$ and with $g^*$ $dm$-a.e on the subset $K := \partial \Gamma_K \cap \T$. In other words, $G\in L^\infty(\partial \Gamma_K)$ with $G=g^*$ $dm$-a.e on $K$ and $G=g$ on $\partial \Gamma_K \cap \D$. Now since, $d\omega_\lambda$ is absolutely continuous wrt arc-length measure on $\partial \Gamma_K$, we have by the reproducing formula that
\[
\abs{g(\lambda)} \leq \int_{\partial \Gamma_K} \abs{G(z)} d\omega_{\lambda}(z) \leq \norm{G}_{L^\infty(\Gamma_K)}, \qquad \lambda \in \Gamma_K.
\]
This gives the improved embedding $\Po_K \B_0 \hookrightarrow H^\infty(\Omega_K)$, and completes the proof of the Theorem. 
\end{proof}

We now argue for the case that the entropy condition alone cannot be further improved.

\begin{proof}[Proof of \thref{THM:SHARPENT}]
    Let $w$ be a majorant with the properties that 
    \[
    \int_0^1 \frac{w^2(t)}{t} dt = \infty,
    \]
    but such that 
    \[
    \int_0^1 \frac{w^{2+\delta}(t)}{t} dt < \infty,
    \]
    for some number $\delta>0$. Now let $K$ be set of positive Lebesgue measure on $\T$, which satisfies the SA-property wrt $\B_0(w)$, whose existence is guaranteed by \thref{THM:SAsets} in view of condition $(i)$ above. According to Khrushchev's lemma, we can pick analytic polynomials $(Q_n)_n$, which converge to $0$ in $\B_0(w)$ and weak-star in $L^\infty(K)$ to $1$. At the cost of slightly shrinking the Lebesgue measure of $K$, passing to an appropriate convex combination of the analytic polynomials $(Q_n)s$, applying Egoroff's theorem and appealing to the restriction principles in \thref{LEM:SA-RIG}, we may assume that $K$ is compact in $\T$, and that $Q_n$ converges uniformly to $1$ on $K$. Now by the trivial containment $\B_0(w) \subseteq \B_0$, we also have that $K$ satisfies the SA-property wrt $\B_0$. Recall that a straightforward application of the fundamental theorem of calculus in conjunction with condition $(ii)$ on the majorant $w$ above, implies that for any functions $f$ in $\B_0(w)$, the following radial growth condition holds:
\[
\abs{f(z)} \lesssim \norm{f}_{\B(w)} \int_{1-|z|}^1 \frac{w(t)}{t}dt \lesssim \norm{f}_{\B(w)}w(1-|z|)^{-(1+\delta)} \qquad z \in \D.
\]
However, this means that $Q_n$ also converges to $0$ in the growth space $\G_{w^{1+\delta}}$ above (see also the introduction for a formal definition), and uniformly to $1$ on $K$, hence $K$ also satisfies the SA-property wrt $\G_{w^{1+\delta}}$. According to Khrushchev's Theorem (see Theorem 3.1 in \cite{khrushchev1978problem}), this implies that $K$ cannot almost contain any set of finite $w$-entropy.
\end{proof}

\section{Approximation problems in de Branges-Rovnyak spaces}\label{SEC:7}
In this section, our principal intention is to utilize the developments on SA-sets for the weighted Bloch spaces to address certain questions on approximation in the context of de Branges-Rovnyak spaces.
\subsection{de Branges-Rovnyak spaces}
 Given an analytic function $b: \D \to \D$, we define the corresponding de Branges-Rovnyak space $\hil(b)$ as the Hilbert space of holomorphic functions in $\D$ supplied with the collection of reproducing kernels
\[
\kappa_b(z,\lambda) = \frac{1-\conj{b(\lambda)}b(z)}{1-\conj{\lambda}z} \qquad z,\lambda \in \D.
\]
For instance, the subcollection of de Branges-Rovnyak space $\hil(b)$ with symbols corresponding to inner functions $b$, parameterize the lattice of the non-trivial closed invariant subspace for the backward shift operator 
\[
M_z^*(f)(z) = \frac{f(z)-f(0)}{z}, \qquad z \in \D
\]
acting on functions $f$ belonging to the Hardy space $H^2$. These are commonly referred to as the model spaces, as the action of $M_z^*$ on the model spaces serve as functional models for a certain class of contractions acting on a separable Hilbert space. We refer the reader to \cite{cima2000backward} for detailed treatment on model spaces. A typical function theoretical feature of de Branges-Rovnyak spaces is their difficulty in identifying elements beyond their reproducing kernels. A distinguished case is when $b$, regarded as a vector in the unit-ball of $H^\infty$, is not an extreme-point. This is property is metrically characterized by the condition
\begin{equation}\label{Extcond}
\int_{\T} \log (1-|b(\zeta)|^2) dm(\zeta) > -\infty.
\end{equation}
In that case, one can show that the corresponding de Branges-Rovnyak space $\hil(b)$ contains all analytic polynomials and that they form a dense subset therein. For a general function $b$, A. Aleman and B. Malman established in 2017, that functions in $\hil(b)$ which extend continuously up to $\T$ form a dense subset of $\hil(b)$ (see \cite{dbrcont}), which extended a classical result of A.B. Aleksandrov, previously known in the context of model spaces \cite{aleksandrovinv}. More recently, questions on approximations with smooth functions on the boundary and their connection to the theory of subnormal operators were considered in \cite{DBRpapperAdem} and \cite{limani2024constructions}. Here, we shall consider the existence (or lack off) functions of enjoying weaker boundary smoothness properties in de Branges-Rovnyak spaces. To fix some notation, we let $b:\D \to \D$ be an element in the unit-ball and set $\Delta^2= 1-\abs{b}^2$ to denote the corresponding bounded weight on $\T$, whose carrier set is designated by
\[
E:=\left\{ \zeta \in \T: \Delta^2(\zeta)>0 \right\}.
\]
The following description of de Branges-Rovnyak spaces provides the right for studying such questions and has been proven prevalent in recent developments mention above. For instance, see \cite{jfabackshift}.

\begin{prop}\thlabel{PROP:HB} Let $b$ be an extreme-point in the unit-ball of $H^\infty$. A function $f\in H^2$ belongs to $\hil(b)$ if and only if there exists a unique element $f_+ \in L^2(\Delta^2 dm)$ such that 
\[
T_{\conj{b}}(f)(z) = \int_E \frac{f_+(\zeta)}{1-\conj{\zeta}z} \Delta^2(\zeta) dm(\zeta), \qquad z\in \D,
\] 
where $T_{\conj{b}}= C(\conj{b}\cdot)$ denotes the Toeplitz operator with symbol $\conj{b}$. The mapping $J:f \mapsto (f,f_+)$ from $\hil(b)$ into $H^2 \oplus L^2(E)$ defines an isometry which induces the following equivalent norm on $\hil(b)$:
\[
\norm{f}^2_{\hil(b)}= \norm{f}^2_{H^2} + \norm{f_+}^2_{L^2(E)}.
\]
Furthermore, an element $J(\hil(b))^\perp$ if and only if it is of the form $(bh,\Delta h)$ for some $h\in H^2$.
\end{prop}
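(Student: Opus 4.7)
The plan is to implement the standard Sarason-type complementation scheme, realizing $\hb$ as the first-coordinate image of a carefully chosen closed subspace of an auxiliary direct-sum Hilbert space. The underlying Hilbert space is $\K := H^2 \oplus L^2(\Delta^2 dm)$ with its natural direct-sum inner product; we tacitly use the canonical unitary $L^2(\Delta^2 dm) \cong L^2(E)$ given by $g \mapsto \Delta g$, which converts the $L^2(E)$-formulation in the statement into the more symmetric $L^2(\Delta^2 dm)$-formulation in the proof.

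First I would introduce the subspace $\N := \{(bh, h) : h \in H^2\} \subset \K$. Since $|b|^2 + \Delta^2 = 1$ a.e.\ on $\T$, the map $h \mapsto (bh, h)$ is an isometry from $H^2$ into $\K$, so $\N$ is closed and isometrically isomorphic to $H^2$; under the identification above, $\N$ corresponds precisely to the tuples $(bh, \Delta h)$, $h \in H^2$, appearing in the statement. Next I would compute $\N^\perp$. The orthogonality of $(f, f_+)$ against every $(bh, h) \in \N$ rewrites, after pushing both inner products into the $L^2(\T)$ pairing, as $\langle h, T_{\bar b}f + f_+\Delta^2\rangle_{L^2(\T)} = 0$ for all $h \in H^2$. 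Since $T_{\bar b}f \in H^2$, applying the Riesz projection $P_+$ gives $T_{\bar b}f = -C(f_+\Delta^2)$, which after absorbing the sign into $f_+$ is precisely the Cauchy--Toeplitz identity in the proposition. Uniqueness of $f_+$ follows from the F.\ and M.\ Riesz theorem: any two candidates differ by some $u \in L^2(\Delta^2 dm)$ with $C(u\Delta^2) = 0$, so $u\Delta^2 \in L^2(\T) \cap (H^2)^\perp$; since $\Delta^2$ vanishes off $E$, a short argument forces $u\Delta^2 = 0$ a.e., hence $u = 0$ in $L^2(\Delta^2 dm)$.

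The heart of the argument is then to show that the first-coordinate projection $\Pi:\N^\perp \to H^2$ is injective with image equal to $\hb$, isometrically with respect to the graph norm $\|f\|_{H^2}^2 + \|f_+\|^2_{L^2(\Delta^2)}$. Injectivity on $\N^\perp$ is immediate from the uniqueness just established: if $(0,f_+) \in \N^\perp$ then $C(f_+\Delta^2) = T_{\bar b}(0) = 0$, so $f_+ = 0$. For the isometric identification with $\hb$ I would verify the claim at the level of reproducing kernels: using $\kappa_b(z,\lambda) = (1-\overline{b(\lambda)}b(z))/(1-\bar\lambda z)$ and $1-|b|^2 = \Delta^2$, a direct computation produces an explicit correction term $k_\lambda^+ \in L^2(\Delta^2 dm)$ so that $(\kappa_b(\cdot,\lambda), k_\lambda^+) \in \N^\perp$ and so that its $\K$-norm squared evaluates to exactly $\kappa_b(\lambda,\lambda)$. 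Since the kernels $\kappa_b(\cdot,\lambda)$ span a dense subspace of $\hb$ by the general theory of reproducing kernel Hilbert spaces, this forces $J:f\mapsto (f,f_+)$ to extend to an isometry from $\hb$ into $\N^\perp$, which by injectivity of $\Pi$ is surjective onto $\N^\perp$.

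The main obstacle is the kernel-level identification together with the density of the kernel span in $\Pi(\N^\perp)$; once this is in place, the final orthogonal decomposition $\K = J(\hb) \oplus \N$ yields at once the last claim of the proposition, namely $J(\hb)^\perp = \N = \{(bh,\Delta h): h \in H^2\}$. Everything else is formal Hilbert-space manipulation driven by the algebraic identity $|b|^2+\Delta^2 = 1$ and the Toeplitz--Hankel structure of $T_{\bar b}$.
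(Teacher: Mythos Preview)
The paper does not prove this proposition; it is quoted as a known structural fact about $\hil(b)$ with a reference to the literature (the citation immediately preceding the statement). So there is no in-paper argument to compare against, and your outline is the standard Sarason complementation scheme, which is the expected route.

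Most of your sketch is correct: the map $h\mapsto(bh,h)$ is an isometry of $H^2$ into $\K$ because $|b|^2+\Delta^2=1$; projecting $(k_\lambda,0)$ onto $\N^\perp$ yields $(\kappa_b(\cdot,\lambda),-\overline{b(\lambda)}k_\lambda)$ with $\K$-norm squared equal to $\kappa_b(\lambda,\lambda)$; and density of these tuples in $\N^\perp$ follows from the injectivity of $\Pi$. This identifies $\Pi(\N^\perp)$ with $\hil(b)$ isometrically, and the description of $J(\hil(b))^\perp$ drops out.

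There is, however, a real gap in your uniqueness argument for $f_+$. You argue that if $C(u\Delta^2)=0$ then $\overline{u}\Delta^2\in H^2$, and then claim that ``since $\Delta^2$ vanishes off $E$, a short argument forces $u\Delta^2=0$.'' That reasoning only goes through when $\T\setminus E$ has positive measure, for then $\overline{u}\Delta^2$ is an $H^2$-function vanishing on a set of positive measure. But an extreme point $b$ can perfectly well satisfy $|b|<1$ a.e.\ on $\T$ (so $E=\T$) while still having $\log(1-|b|^2)\notin L^1$; in that case your support argument says nothing. The correct mechanism is Szeg\H{o}'s theorem: extremality of $b$ is precisely the statement $\log\Delta^2\notin L^1(\T)$, which is equivalent to the analytic polynomials being dense in $L^2(\Delta^2\,dm)$. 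Since $C(u\Delta^2)=0$ says exactly that $u\perp\zeta^n$ in $L^2(\Delta^2\,dm)$ for all $n\ge 0$, this density forces $u=0$. This is the only place in the whole argument where the extreme-point hypothesis is genuinely used, and your sketch does not invoke it.
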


\subsection{De Branges-Rovnyak spaces lacking smooth functions}

In \cite{limani2022abstract}, it was observed that it is possible to obtain the following slight extension of the aforementioned density theorem in $\hil(b)$.

\begin{thm}[See \cite{limani2022abstract}] \thlabel{THM:UAhB} Functions in $\hil(b)$ with uniformly convergent Taylor series on $\cD$ are dense in $\hil(b)$, for any $b$.
\end{thm}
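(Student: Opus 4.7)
The plan is to approximate any $f \in \hb$ by its radial dilates $f_r(z) := f(rz)$, $0 < r < 1$. Since $f$ is holomorphic on $\D$, each $f_r$ extends holomorphically to the larger disc $\{|z| < 1/r\}$, hence its Taylor coefficients decay geometrically and its Taylor series converges uniformly on $\cD$. The density statement therefore reduces to two assertions: $f_r \in \hb$ with $\norm{f_r}_{\hb} \leq \norm{f}_{\hb}$, and $f_r \to f$ in $\hb$ as $r \to 1^-$. (When $b$ is non-extreme, the polynomials already belong to $\hb$ densely, and the result is trivial; the substantive case is $b$ extreme, where one cannot fall back on polynomial or Ces\`aro-mean approximation.)

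The first and most delicate task is to verify that dilations act as contractions on $\hb$. I would do this by exploiting the representation of \thref{PROP:HB}: for $f \in \hb$ with $J(f) = (f, f_+) \in H^2 \oplus L^2(E)$, construct a companion $g_r \in L^2(E)$ such that the pair $(f_r, g_r)$ is orthogonal to $J(\hb)^\perp = \{(bh, \Delta h) : h \in H^2\}$, thereby realizing $f_r$ as an element of $\hb$ whose norm is controlled by the original. A natural candidate for $g_r$ arises by projecting an ansatz built from $f_+$ onto $J(\hb)$. An equivalent route works on the reproducing-kernel side: the adjoint of the dilation operator $D_r : f \mapsto f_r$ on $\hb$ acts on the kernels by $\kappa_b(\cdot, \lambda) \mapsto \kappa_b(\cdot, r\lambda)$, so contractivity of $D_r$ amounts to positive semi-definiteness of the difference kernel $(z, \lambda) \mapsto \kappa_b(z, \lambda) - \kappa_b(rz, r\lambda)$. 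This positivity can be extracted from the factorization $\kappa_b(z, \lambda) = (1 - \overline{b(\lambda)} b(z))/(1 - \overline{\lambda} z)$ together with the corresponding contractivity of dilations on the Szeg\H{o} kernel of $H^2$.

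Once the contractivity bound is established, the strong convergence $f_r \to f$ in $\hb$ follows by a soft Hilbert-space argument. The family $\{f_r\}_{r<1}$ is uniformly norm-bounded and $f_r(z) \to f(z)$ pointwise on $\D$; continuity of point evaluations on $\hb$ then yields weak convergence $f_r \rightharpoonup f$ in $\hb$. Weak lower semi-continuity of the norm combined with the contractivity bound $\norm{f_r}_{\hb} \leq \norm{f}_{\hb}$ gives $\norm{f_r}_{\hb} \to \norm{f}_{\hb}$, and in a Hilbert space weak convergence together with convergence of norms implies strong convergence.

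The principal obstacle is the contractivity step. For extreme $b$, one cannot bypass it via polynomial approximation since polynomials may fail even to lie in $\hb$; the estimate must be produced from the intrinsic structure of $\hb$, either through the orthogonal decomposition of \thref{PROP:HB} or through the reproducing-kernel positivity described above. Every other step is a standard functional-analytic manoeuvre, and the Taylor-series property of the dilates $f_r$ is automatic.
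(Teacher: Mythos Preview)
Your plan has a genuine gap at the very first step: radial dilations do \emph{not} in general map $\hil(b)$ into itself, so the contractivity you hope to extract from the kernel is simply false. Take the single Blaschke factor $b(z)=(z-a)/(1-\bar a z)$ with $a\neq 0$. Then $\hil(b)=K_b$ is the one-dimensional span of $f(z)=(1-\bar a z)^{-1}$, and $f_r(z)=(1-r\bar a z)^{-1}$ is not a scalar multiple of $f$, hence $f_r\notin K_b$ for any $0<r<1$. Equivalently, the kernel difference $\kappa_b(z,\lambda)-\kappa_b(rz,r\lambda)$ fails to be positive semi-definite: with $a=1/2$ and $\lambda=-1/2$ one computes $\kappa_b(\lambda,\lambda)=\tfrac{3}{4}\cdot\tfrac{16}{25}$ while $\kappa_b(r\lambda,r\lambda)\to\tfrac{3}{4}$ as $r\to 0$, so the diagonal entry is already negative for small $r$. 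Thus neither the route through \thref{PROP:HB} nor the reproducing-kernel positivity argument can succeed as stated; the obstacle is structural, not technical.

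The paper does not supply a proof but cites \cite{limani2022abstract}, and the remark following the statement indicates the actual argument is of a quite different nature: it passes through Vinogradov's description of the Cauchy dual of the space $U$ of power series converging uniformly on $\cD$, which in turn rests on Carleson's theorem on a.e.\ convergence of Fourier series. That such heavy machinery is invoked is consistent with the failure of the naive dilation approach; indeed, already the weaker density theorem of Aleman--Malman (continuous functions are dense in $\hil(b)$) required a substantial argument precisely because $f_r\to f$ in $\hil(b)$ is unavailable for extreme $b$.
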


We remark that a critical part of its proof hinges on results regarding the boundary behavior of Cauchy dual elements of space of uniformly convergent Taylor series due to S.A. Vinogradov, which at its turn relies in an essentially way on  Carleson's Theorem on pointwise convergence of Fourier series in $L^2$. It was asked in \cite{parise2019densite} whether it is possible to extend the density result to the space of analytic functions with absolutely convergent Taylor series. A counterexample to this fact was provided in the context of model spaces \cite{limani2022abstract}, which relies on the existence of cyclic inner functions in $\B_0$, see \cite{anderson1991inner} and some further recent work in \cite{limani2023mzinvariant}. It turns out that the existence of SA-sets in $\B_0(w)$ has some striking consequences to the context of extreme de Branges-Rovnyak spaces with an outer symbols $b$.

\begin{thm}\thlabel{THM:dBR} Let $w$ be a majorant satisfying \eqref{DiniDiv}. Then there exists an outer function $b_w: \D \to \D$ with the property that the corresponding de-Branges Rovnyak $\hil(b_w) \cap W^{1}_a(w) = \{0\}$. In particular, every non-trivial function in $\hil(b_w)$ satisfies
\begin{equation}\label{l^1_a(w)}
\sum_{n\geq 1} \abs{f_n} w(1/n) =+ \infty,
\end{equation}
where $(f_n)_n$ denotes the Taylor coefficients of $f$ (centered at the origin).
  
\end{thm}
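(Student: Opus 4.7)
The plan is to construct $b_w$ as an outer function whose boundary modulus drops only on an SA-set for $\B_0(w)$, and then to extract from any hypothetical $f \in \hil(b_w) \cap W^1_a(w)$ a Cauchy transform of an $L^1(E)$-function lying in $W^1_a(w)$, which Khrushchev's Lemma rules out. First, I invoke \thref{THM:SAsets} to fix a Lebesgue measurable set $E \subset \T$ with $0 < |E| < 1$ satisfying the SA-property with respect to $\B_0(w)$; the bound $|E| < 1$ is automatic, since polynomials cannot be weak-star dense in $L^\infty(\T)$ (F.\ and M.\ Riesz). Fixing a number $c \in (0, 1)$, let $b_w$ be the outer function in $H^\infty$ with boundary modulus $|b_w|^2 = 1 - c\chi_E$. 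Since $\log|b_w| \in L^\infty(\T)$, $b_w$ is well-defined, non-vanishing in $\D$, and $|b_w(z)| \geq 1 - c$ on $\overline{\D}$ by the harmonic extension of $\log|b_w|$; in particular $1/b_w \in H^\infty$. Moreover $\int_\T \log(1 - |b_w|^2)\, dm = -\infty$ on the positive-measure set $\T \setminus E$, so $b_w$ is extreme in the unit ball of $H^\infty$ and \thref{PROP:HB} applies.

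Assume for contradiction that $f \in \hil(b_w) \cap W^1_a(w)$ is non-trivial. By \thref{PROP:HB} there exist $h \in H^2$ and $f_+ \in L^2(E, dm)$ with $\overline{b_w}\, f - \Delta^2 f_+ = \bar\zeta\, \bar h$ on $\T$, where $\Delta^2 = c\chi_E$. Multiplying this identity by $\zeta b_w$ and using $|b_w|^2 + \Delta^2 = 1$ gives
\[
f - F = b_w\, \bar\zeta\, \bar h \qquad \text{on } \T,
\]
where $F := \Delta^2(f + b_w f_+) \in L^1(E)$ is supported on $E$. Dividing through by $b_w$ and noting that $\bar\zeta\, \bar h \in \overline{z H^2}$ is annihilated by the Riesz projection $P_+$, combined with $f/b_w \in H^2$ (since $1/b_w \in H^\infty$), one obtains $f/b_w = P_+(F/b_w) = C(\phi)$ with $\phi := F/b_w \in L^1(E) \setminus \{0\}$ supported on $E$ (if $\phi = 0$ then $f = b_w\,\bar\zeta\,\bar h$ on $\T$ forces $f/b_w \in H^2 \cap \overline{zH^2} = \{0\}$, contradicting $f \ne 0$). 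In summary, $f = b_w \cdot C(\phi)$ with $\phi$ a non-trivial $L^1(E)$-function.

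The crux is to verify that $C(\phi) \in W^1_a(w)$, whence Khrushchev's Lemma (\thref{SAduality}) combined with the SA-property of $E$ forces $\phi = 0$ and thus $f = 0$, a contradiction. Writing $(f/b_w)' = f'/b_w - f\, b_w'/b_w^2$ and using $|b_w| \geq 1 - c$ on $\D$, this boils down to establishing $\int_\D |f|\, |b_w'|\, w(1-|z|)\, dA < \infty$. Combining the $H^2$-pointwise bound $|f(z)| \lesssim \|f\|_{H^2}(1-|z|)^{-1/2}$ with the identity $b_w'/b_w = 2\partial_z u$, where $u$ is the harmonic extension to $\D$ of $\tfrac{1}{2}\log(1 - c\chi_E)$, the problem reduces to a weighted-gradient estimate for $u$ depending on the geometry of $\partial E$ and on the Dini-behavior of $w$; I expect this to be achievable by a careful choice of $E$ (e.g.\ compact with $|\partial E| = 0$) and of $c$, and this regularity step is the main technical obstacle. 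The ``in particular'' conclusion follows at once from the polar-coordinate estimate $\|z^n\|_{W^1_a(w)} \asymp w(1/n)$, which yields the continuous embedding $\{f = \sum_n f_n z^n : \sum_{n \geq 1} |f_n| w(1/n) < \infty\} \hookrightarrow W^1_a(w)$; hence any non-trivial $f \in \hil(b_w)$ with summable $\sum_n |f_n| w(1/n)$ would lie in $\hil(b_w) \cap W^1_a(w) = \{0\}$, a contradiction.
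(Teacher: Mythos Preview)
Your construction of $b_w$ and overall plan coincide with the paper's: fix an SA-set $E$ for $\B_0(w)$ via \thref{THM:SAsets}, define $b_w$ outer with $|b_w|\equiv 1$ off $E$ and a fixed constant in $(0,1)$ on $E$, and then from any $f\in\hil(b_w)\cap W^1_a(w)$ produce a non-trivial $\phi\in L^1(E)$ with $C(\phi)\in W^1_a(w)$, contradicting Khrushchev's Lemma. The ``in particular'' clause is argued identically.

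The gap you yourself flag is genuine, and your proposed workaround cannot succeed. You need $\int_\D |f|\,|b_w'|\,w(1-|z|)\,dA<\infty$, and you hope to secure this by choosing $E$ geometrically tame. But from the outer-function formula one computes $b_w'/b_w = 2\alpha\, C(1_E)'$ for a non-zero constant $\alpha$, while $|b_w|\asymp 1$ on $\D$; and since $E$ is an SA-set, Khrushchev's Lemma applied to $1_E\in L^1(E)$ forces $C(1_E)\notin W^1_a(w)$, i.e.\ already
\[
\int_\D |b_w'|\,w(1-|z|)\,dA \asymp \int_\D |C(1_E)'|\,w(1-|z|)\,dA = +\infty,
\]
for \emph{every} admissible choice of $E$. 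The factor $|f|$ only grows toward $\T$, so it cannot help; the singular behaviour of $b_w'$ is forced by the very property you need $E$ to possess. Splitting $(f/b_w)'$ into two pieces and estimating each in absolute value therefore cannot work.

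The paper avoids division by $b_w$ altogether. It works instead with $F:=T_{\conj{b}}(f)$, which by \thref{PROP:HB} is \emph{directly} equal to $\tfrac{3}{4}\,C(1_E f_+)$ --- a Cauchy transform of an $L^1(E)$-function obtained for free, with no $b_w'$-term to control --- and then passes to the SA-contradiction via $f_+\equiv 0$, concluding $f\equiv 0$ from the injectivity of $T_{\conj{b}}$ for outer $b$. So the missing idea is simply to use $T_{\conj{b}}(f)$ rather than $f/b_w$ as the function to which Khrushchev's Lemma is applied.
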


To appreciate the ramification of \thref{THM:dBR}, one should contrast it to \thref{THM:UAhB}. For instance, if the majorant $w$ is constant, then there exists an extreme point $b$ in the unit-ball of $H^\infty$, such that the corresponding $\hil(b)$-space lacks non-trivial functions with absolutely convergent Taylor series. In fact, \thref{THM:dBR} implies the stronger statement that there exists an $\hil(b)$-space, which contains no non-trivial function $f$ satisfying 
\[
\sum_{n\geq 1} \frac{\abs{f_n} }{\sqrt{\log(n+1)\log \log (n+1) \cdot  \dots \cdot \log \log \dots \log (1+n)}} < \infty,
\]
for any number of desirable iterations of the $\log$. We now turn to the proof.
\begin{proof}[Proof of \thref{THM:dBR}]
Let $E$ be a set satisfying the SA-property wrt to $\B_0(w)$ and define the outer function $b_w=b:\D \to \D$ by 
\[
\abs{b(\zeta)} = \begin{cases} 1 & \zeta \in \T \setminus E \\
1/2 & \zeta \in E.
    
\end{cases}
\]
Then $b$ is clearly extreme, and according to \thref{PROP:HB}, a function $f \in H^2$ belongs to $\hil(b)$ if and only if there exists $f_+ \in L^2(E)$ such that 
\[
T_{\conj{b}}(f)(z) = \int_{\T} \frac{f_+(\zeta)}{1-\conj{\zeta}z} \Delta^2(\zeta) dm(\zeta), \qquad z\in \D.
\]
But since $F=T_{\conj{b}}(f)$ belongs to $\hil(b)$ whenever $f$ does (for instance, apply Fubini's Theorem and \thref{PROP:HB}), we get that 
\[
F(z)= \frac{3}{4} \int_{E} \frac{f_+(\zeta)}{1-\conj{\zeta}z}  dm(\zeta), \qquad z\in \D
\]
is an element in $\hil(b)$. But $E$ was assumed to satisfy the SA-property wrt $\B_0$, hence according to \thref{THM:SAsets} this is only possible if $f_+\equiv 0$ and thus $f\equiv 0$, since $T_{\conj{b}}$ is injective for $b$ outer. This proves that $W^1_a(w) \cap \hil(b)=\{0\}$. To prove the second statement, it suffices to show that every holomorphic function $f$ in $\D$ which satisfies 
\[
\sum_{n=0}^\infty \abs{f_n} w(1/n) < \infty
\]
must necessary belong to $W^1_a(w)$. By means of applying the triangle inequality, it suffices to verify that 
\[
n \int_{0}^1 r^n w(1-r) dr \lesssim w(1/n)
\]
for any majorant $w$. This task is accomplished by first using the monotonicity of $w$ to estimate
\[
n\int_{1-1/n}^1 r^n w(1-r) dr \leq n w(1/n) \int_0^1 r^n dr \lesssim w(1/n).
\]
For the second part, we utilize the majorant assumption of $w$, which implies the inequality $w(1-r) \leq (1-r)^\alpha n^\alpha w(1/n)$ for $1-r\geq 1/n$, where $0<\alpha <1$ is a fixed number. Using this in conjunction with well-known relationship between Euler Beta-functions and asymptotics of Gamma functions, we get 
\[
n\int_0^{1-1/n} r^n w(1-r) dr \lesssim w(1/n) n^{1+\gamma} \int_{0}^1 (1-r)^\gamma r^n dr \lesssim w(1/n).
\]
\end{proof}
\section{Applications to universality in the unit disc}\label{SEC:8}
In this section, we shall illustrate how our developments in the previous sections can be applicable to the theory of Universality. Our first objective is to deduce certain generic behavior of Taylor series of functions in regular spaces, using the Plessner-type theorem on polynomial approximation in \thref{THM:AbsThom} (see \thref{THM:Univ1} below). Secondly, we shall show that \thref{THM:SAsets} on the existence of $SA$-sets for certain weighted Bloch spaces allows us to answer questions that were raised in the recent work of \cite{beise2016generic}. This is summarized in \thref{THM:Univ2}. We also mentioned the recent work of S. Khrushchev, where his previous work on SA-sets was utilized in order to study universality of Taylor series, see \cite{khrushchev2020continuous}.

\subsection{General Theory of Universality}
To set up the framework, we give a brief and rough summary of the relevant notions in the theory of universality. For a more detailed treatment, we refer the reader to the excellent survey \cite{grosse1999universal}. Roughly speaking, the theory of universality deals with the study of limiting processes which give rise to the following philosophical aspect of universality: \emph{there exists an object which is "maximally divergent", such that via a countable process, can approximate a maximal class of objects}. To phrase this idea within an abstract mathematical framework, one considers a topological space $X$ of objects, a topological space $Y$ of elements which one would like to be approximated through a sequence of continuous mappings $\{T_n\}_n$ with $T_n : X \to Y$, $n=0,1,2,\dots$. An element $x \in X$ is said to be \emph{Universal} for a family (sequence) of operators $\{T_n\}_{n=0}^\infty$ if the set $\{T_n(x)\}_n$ is dense in $Y$, and we shall denote by $\mathcal{U} \subseteq X$ the set of universal elements and say that the family $\{T_n\}_n$ is universal if it admits a universal element. Several results in analysis has suggested the heuristic idea that whenever a universal element of family $\{T_n\}_n$ exists, then the set $\mathcal{U}$ of universal elements is in fact generic in the sense of Baire categories. We remind the reader that a set $X_0 \subset X$ is said to be generic in the sense of Baire categories, if $X_0$ is dense in $X$ and can be expressed as a countable intersection of open dense subsets of $X$. The fundamental Theorem of Universality goes as follows.

\begin{prop}[The Universality criterion]\thlabel{UnivCrit} Let $X,Y$ be complete separable metric spaces and $\{T_n\}_n$ be continuous mappings from $X$ into $Y$. Then the set $\mathcal{U}$ of universal elements for $\{T_n\}_n$ is generic in $X$ in the sense of Baire category, if and only if to every pair of $(x,y)\in X \times Y$ there exists a sequence $\{x_k\}_k$ in $X$ and a subsequence $\{T_{n_k}\}_k$ such that 
\[
(x_k,T_{n_k}(x_k)) \to (x,y)
\]
in $X \times Y$.

\end{prop}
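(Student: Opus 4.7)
The plan is to deduce this from the Baire category theorem by writing $\mathcal{U}$ explicitly as a countable intersection of open sets, and then translating the density of each such set into the stated approximation condition. Since $Y$ is separable metric, fix once and for all a countable base $\{V_j\}_{j\geq 1}$ of non-empty open sets for $Y$. Observe that an element $x\in X$ is universal if and only if for each $j$ there exists some $n$ with $T_n(x)\in V_j$, which gives the identification
\[
\mathcal{U} = \bigcap_{j\geq 1} \bigcup_{n\geq 0} T_n^{-1}(V_j).
\]
Since each $T_n$ is continuous, each set $T_n^{-1}(V_j)$ is open in $X$, and therefore so is $\mathcal{O}_j := \bigcup_n T_n^{-1}(V_j)$. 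This reduces matters to verifying that $\mathcal{O}_j$ is dense in $X$ for every $j$, which via the Baire category theorem applied to the complete metric space $X$ will force $\mathcal{U}$ to be a dense $G_\delta$, i.e.\ generic in the sense of Baire category.

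For the implication that the approximation condition implies genericity of $\mathcal{U}$, I would fix a non-empty open set $U\subseteq X$ and an index $j$, pick any $x\in U$ and any $y\in V_j$, and apply the assumed condition to the pair $(x,y)$ to obtain sequences $x_k\to x$ in $X$ and $T_{n_k}(x_k)\to y$ in $Y$. For all sufficiently large $k$ one then has $x_k\in U$ and $T_{n_k}(x_k)\in V_j$ by openness, so $x_k\in\mathcal{O}_j\cap U$. This shows $\mathcal{O}_j$ meets every non-empty open $U$, hence is dense.

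For the converse, assume $\mathcal{U}$ is generic, in particular dense. Given any pair $(x,y)\in X\times Y$, choose a neighborhood base $\{V_j\}_j$ of $y$ shrinking to $y$ and, by density of $\mathcal{U}$, select $x_k\in \mathcal{U}$ with $d_X(x_k,x)<1/k$. Since each $x_k$ is universal, the orbit $\{T_n(x_k)\}_n$ is dense in $Y$, so we may extract an index $n_k$ such that $d_Y(T_{n_k}(x_k),y)<1/k$. Then $(x_k,T_{n_k}(x_k))\to (x,y)$ in $X\times Y$, which yields the approximation condition.

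The argument is essentially bookkeeping once one has the $G_\delta$ identification above; no step is deep, and the only mild subtlety is to keep track of the fact that in the forward direction the subsequence $\{n_k\}$ is extracted with $k$ rather than as a thinning of a fixed sequence, so one should explicitly note that the statement allows the indices $n_k$ to repeat or to be chosen freely for each $k$. Given the classical nature of the criterion, I would keep the write-up short and emphasize the $G_\delta$ identity and the symmetry between the two directions.
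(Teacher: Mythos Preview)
The paper does not actually prove this proposition; immediately after stating it the author writes that it ``is actually not difficult to prove'' and refers the reader to the survey \cite{grosse1999universal}. Your argument is precisely the standard one found there: the $G_\delta$ identification $\mathcal{U}=\bigcap_j\bigcup_n T_n^{-1}(V_j)$ via a countable base of $Y$, followed by Baire in one direction and orbit-density in the other. It is correct as written, including your remark that the indices $n_k$ need not form a strictly increasing sequence for the criterion to go through; nothing further is needed.
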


The proof of the above statement is actually not difficult to prove and we refer the reader to \cite{grosse1999universal} and references therein, for further details on these matters.


\subsection{Mensov Universality of Taylor series}
Our principal application of \thref{UnivCrit} will be for a family $\{T_n\}_n$ which correspond to the Taylor polynomials 
\[
T_N(f)(z) = \sum_{n=0}^N f_n z^n \qquad N=0,1,2,\dots, \qquad z \in \D
\]
which we regard as linear operators acting on a holomorphic Banach space $X$ into $L^\infty(E)$ for some Lebesgue measurable set $E\subset \T$. To this end, we shall need a certain notion of universality for Taylor polynomials due to Menshov.
\begin{definition}\label{DEF:MenUniv} A Lebesgue measurable subset $E \subseteq \T$ is said to induce \emph{Menshov universality} in $X$, if there exists an element $f \in X$ satisfying the following property: every Lebesgue measurable function $g$ on $E$ can be approximated pointwise $dm$-a.e on $E$, by a subsequence $\{T_{n_k}(f)\}_k$ of Taylor polynomials of $f$. In that case, we shall say that $(f,E)$ is a universal Menshov-pair from $X$.

\end{definition}
Our main result in this section gives a fairly sharp dichotomy for Lebesgue measurable subset of $\T$, wrt Menshov universality in the context of holomorphic Banach spaces, which bears the probabilistic flavor of a zero-one-law. 

\begin{thm}\thlabel{THM:Univ1} Let $X$ be a holomorphic Banach space and $E \subset \T$ be a Lebesgue measurable set. Then there exists a  partition $A, S$ of $E$ (unique modulo sets of Lebesgue measure zero), which gives rise to the following distinct phenomenons.
\begin{enumerate}
    \item[(i)] There exists a generic subset $\mathcal{U}_X \subset X$ in the sense of Baire categories, such that every $f \in \mathcal{U}_X$ gives rise to a universal Menshov-pair $(f,S)$ from $X$. 
    
    \item[(ii)] Whenever $(f,f^*) \in \Po_A(X)$ and there exists a subsequence $\{T_{n_k}f\}_k$ which converges pointwise $dm$-a.e on $A$, then they must converge to $f^*$.
 
\end{enumerate}

\end{thm}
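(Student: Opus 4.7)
Apply \thref{THM:AbsThom} to the given $E$ to obtain the unique-modulo-null-sets partition $E = A \cup S$, with $A$ a set of rigidity for $X$ and $S$ satisfying the SA-property wrt $X$. The two conclusions then encode the ``universality'' and ``uniqueness'' halves of this dichotomy: (i) will follow from the SA-property of $S$ via the Universality Criterion \thref{UnivCrit}, while (ii) will follow from the rigidity of $A$.

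\textbf{Part (i).} Take $Y = L^0(S, dm)$ endowed with the convergence-in-measure metric, a complete separable metric space, and note that each Taylor polynomial map $T_N : X \to Y$ is continuous of finite rank. To verify the hypothesis of \thref{UnivCrit}, fix $(f, g) \in X \times Y$ and first approximate $g$ in $L^0(S)$ by bounded functions $g_j$. For each $j$ the SA-property of $S$ ($\Po_S(X) = X \oplus L^\infty(S)$) supplies polynomials $P_{j,k}$ with $P_{j,k} \to f$ in $X$ and $P_{j,k} \to g_j$ weak-star in $L^\infty(S)$. Since weak-star convergent sequences in $L^\infty(S)$ are norm-bounded, Mazur's lemma upgrades weak-star convergence to $L^2(S)$-norm convergence along convex combinations, and after thinning to pointwise $dm$-a.e convergence. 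A standard diagonal procedure in $j,k$ then produces polynomials $P_k$ with $P_k \to f$ in $X$ and $P_k \to g$ in $L^0(S)$. Setting $n_k := \deg P_k$ forces $T_{n_k}(P_k) = P_k$, verifying the criterion of \thref{UnivCrit} and yielding the advertised generic set $\mathcal{U}_X \subset X$.

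\textbf{Part (ii).} Fix $(f, f^*) \in \Po_A(X)$ and assume $T_{n_k}(f) \to g$ pointwise $dm$-a.e on $A$. For each integer $M \ge 1$ introduce the truncation
\begin{equation*}
A_M := \left\{\zeta \in A : \sup_k |T_{n_k}(f)(\zeta)| \le M \right\},
\end{equation*}
so $A = \bigcup_M A_M$ modulo null sets and on $A_M$ the sequence $T_{n_k}(f)$ is uniformly bounded and converges weak-star in $L^\infty(A_M)$ to $g|_{A_M}$. A short weak-star compactness argument in $L^\infty(A)$ shows that every measurable subset of a set of rigidity is again a set of rigidity, so each $A_M$ is a rigidity set for $X$ and $(f, f^*|_{A_M}) \in \Po_{A_M}(X)$. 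By \thref{THM:Irred} the manifold $\mathcal{L}_{A_M}(X') = \{h \in L^1(A_M) : C(h) \in X'\}$ is dense in $L^1(A_M)$, and for each such $h$ the Cauchy-duality identities $\int_{A_M} f^*\,\bar h\, dm = \langle f, C(h)\rangle$ and $\int_{A_M} T_{n_k}(f)\,\bar h\, dm = \langle T_{n_k}(f), C(h)\rangle$ hold. Bounded convergence on $A_M$ then gives $\langle T_{n_k}(f), C(h)\rangle \to \int_{A_M} g\,\bar h\, dm$, so proving $g = f^*$ a.e on $A_M$ reduces to showing $\lim_k \langle T_{n_k}(f), C(h)\rangle = \langle f, C(h)\rangle$ for $h$ in a dense subset of $L^1(A_M)$.

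\textbf{Main obstacle.} The Cauchy pairing $\langle f, C(h)\rangle$ is a priori only the Abel-sum of the associated power-series pairing $\sum_n f_n\, \overline{\widehat{C(h)}(n)}$, and subsequential partial sums of Abel-summable series need not match the Abel value in general; this is the crux of the proof. The natural route is to argue by contradiction via rigidity of $A_M$: supposing $g \neq f^*$ on a subset of $A_M$ of positive measure, one would construct polynomials $R_j$ with $R_j \to 0$ in $X$-norm and $R_j \to f^* - g|_{A_M}$ weak-star in $L^\infty(A_M)$, contradicting rigidity. The candidates $R_j = Q_j - T_{n_{k(j)}}(f)$, built from the approximating diagonal $(Q_j)_j \to (f, f^*)$ of $\Po_A(X)$ combined with a sparse selection $n_{k(j)} \ge \deg Q_j$ (so that $T_{n_{k(j)}}(Q_j) = Q_j$), give the correct weak-star behavior on $A_M$, but achieving $\|R_j\|_X \to 0$ requires non-trivial control on the $X$-growth of the Taylor projections $T_{n_{k(j)}}(f)$, which are not uniformly bounded on $X$ in general. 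This is the delicate point; once resolved, letting $M \to \infty$ yields $g = f^*$ almost everywhere on $A$.
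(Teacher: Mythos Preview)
Your Part (i) is fine and matches the paper's argument in spirit; the paper works with balls in $L^\infty(S)$ under the metrizable weak-star topology rather than $L^0(S)$, but either target space works with the same trick $T_{n_k}(P_k)=P_k$ for $n_k\ge\deg P_k$.

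Part (ii), however, is genuinely incomplete: you correctly isolate the obstacle---controlling $\|T_{n_{k(j)}}(f)\|_X$---but you do not resolve it, and in fact it cannot be resolved along the lines you propose, since the Taylor projections are typically unbounded on $X$. The paper avoids this obstacle entirely by a much simpler route that never touches $X$-norms of partial sums. Take polynomials $Q_j$ with $(Q_j,Q_j)\to(f,f^*)$ in $X\oplus L^\infty(A)$. The embedding $X\hookrightarrow\mathrm{Hol}(\D)$ forces the Taylor coefficients $(Q_j)_m\to f_m$ for each $m$, so for every \emph{fixed} $k$ the finite sum $\sum_{m=0}^{n_k}|f_m-(Q_j)_m|\to 0$ as $j\to\infty$. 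Combining this with the pointwise bound $|T_{n_k}(f)(\zeta)-T_{n_k}(Q_j)(\zeta)|\le\sum_{m\le n_k}|f_m-(Q_j)_m|$ and the triangle inequality gives
\[
\limsup_{j}\, m\bigl(\{\zeta\in A:|g(\zeta)-Q_j(\zeta)|>\varepsilon\}\bigr)\le m\bigl(\{\zeta\in A:|g(\zeta)-T_{n_k}(f)(\zeta)|>\varepsilon/2\}\bigr)
\]
for all $k$; sending $k\to\infty$ (using $T_{n_k}(f)\to g$ in measure) yields $Q_j\to g$ in measure on $A$. Since weak-star convergence in $L^\infty(A)$ makes $(Q_j)$ uniformly bounded there, a subsequence of $Q_j$ converges $dm$-a.e.\ to $g$, and dominated convergence then forces the weak-star limit $f^*$ to equal $g$. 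No rigidity argument, no $A_M$-truncation, and no Abel-summability issue is needed: the whole point is to compare $g$ with the \emph{approximating polynomials} $Q_j$ (which are bounded on $A$ by hypothesis), not with $T_{n_k}(f)$ in the $X$-norm.
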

\begin{proof}
\proofpart{1}{Statement $(i)$:}
According to \thref{THM:AbsThom}, there exists a unique (modulo sets of Lebesgue measure zero) partition of Lebesgue measurable subsets $A, S$ of $E$, such that $\Po_A(X)$ is irreducible and $S$ satisfies the SA-property wrt $X$. We may assume that both $A, S$ are non-trivial, that is, they have positive Lebesgue measure, otherwise one can set one of them to be the empty-set, for which the verification of either $(i)$ or $(ii)$ becomes redundant. To prove $(i)$, we shall adapt the argument in \cite{beise2016generic}. Note that since any measurable function $f$ is the pointwise $dm$-a.e limit of its truncation by height $f_N := f 1_{\abs{f}\leq N}$, it suffices to approximate any essentially bounded functions on $S$. To this end, we shall regard the sequence of Taylor polynomials $\{T_n\}_n$ as continuous linear operators from $X$ into balls of radius $M>0$ in $L^\infty(S)$, which equipped with the weak-star topology are metrizable (for instance, see \cite{rudin1973functional}). We denote the ball by $B_M(L^\infty(A))$ and recall that the universality criterion in \thref{THM:Univ1} assures that it suffices to check that for any pair $(f,g) \in X \oplus B_M(L^\infty(A))$ there exists analytic polynomials $\{Q_k\}$ and a subsequence $\{T_{n_k}\}_k$ such that
\[
(Q_k, T_{n_k}(Q_k) ) \to (f,g) 
\]
in $X \oplus B_M(L^\infty(A))$. However, since $T_{n} (Q_k) = Q_k$ for any $n$ sufficiently large, the universality criterion actually follows from the assumption that $S$ satisfies the SA-property wrt $X$. By means of repeat the same argument for any ball $B_M(L^\infty(A))$, the claim follows.

\proofpart{2}{Statement $(ii)$:}
Let $(f,f^*)\in \Po_A(X)$ and assume that $\{T_{n_k}f\}_k$ is a subsequence of Taylor polynomials of $f$, which converges pointwise $dm$-a.e to a measurable function $g$. Pick a sequence of analytic polynomials $(Q_j)_j$ such that $(Q_j, Q_j)$ converges to $(f, f^*)$ in $X \oplus L^\infty(A)$. Fix an $\varepsilon >0$ and write
\[
m(\{ A: \abs{g-Q_j} > \varepsilon \}) \leq m(\{ A: \abs{g-T_{n_k}(f)} > \varepsilon/2 \}) + \frac{2}{\varepsilon} \sum_{m=0}^{n_k} \abs{f_m -(Q_j)_m}.
\]
Using the assumption that $Q_j \to f$ in $X \hookrightarrow \text{Hol}(\D)$, it follows by standard arguments involving uniform convergence on compacts, that the Taylor coefficients $(Q_j)_n$ of $Q_j$ converge to the Taylor coefficients $f_n$ of $f$. Sending $j$ to infinity, we obtain
\[
\limsup_{j \to \infty} m(\{ A: \abs{g-Q_j} > \varepsilon \}) \leq m(\{ A: \abs{g-T_{n_k}(f)} > \varepsilon/2 \}), \qquad \forall k, \varepsilon >0.
\]
Now since $T_{n_k}(f)$ converges pointwise $dm$-a.e to $g$ on $A$, they also converge in Lebesgue measure on $A$, hence we can send $k \to \infty$ and conclude that $Q_j \to g$ in Lebesgue measure on $A$. By means of passing to a subsequence and using the assumption that $Q_j \to f^*$ weak-star in $L^\infty(A)$, we conclude that $g=f^*$ on $A$, which proves $(ii)$. The proof is now complete.
\end{proof}

Recall that examples of Holomorphic Banach spaces $X$ with SA-sets of positive Lebesgue measure are provided by \thref{THM:SARegX}, hence the above theorem has particularly interesting applications for such spaces. In particular, it applies to weighted Bloch spaces $\B_0(w)$ with majorants $w$ satisfying condition \eqref{DiniDiv}. It was asked in \cite{beise2016generic} (see Remark 2.3 therein) whether there exists Lebesgue measurable subsets $E \subset \T$ which induce Menshov universality in $\B_0$. An application of \thref{THM:Univ1} in conjunction with \thref{THM:SAsets}, answers the question affirmatively in a strong sense.

\begin{cor}\thlabel{THM:Univ2} Let $w$ be a majorant satisfying 
\[
\int_{0}^1 \frac{w^2(t)}{t} dt = +\infty.
\]
Then for any number $0<\delta <1$, there exists a set $E_{\delta}$ of Lebesgue measure $\delta$ and a corresponding subset $\mathcal{U}_{w,\delta}$ of $\B_0(w)$, generic in the sense of Baire categories, such that every element $f\in \mathcal{U}_{w,\delta}$ gives rise to a universal Menshov pair $(f, E_{\delta})$ from $\B_0(w)$.
\end{cor}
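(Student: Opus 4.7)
My plan is to reduce the Menshov universality statement to part~(i) of Theorem~\thref{THM:Univ1}, applied to the holomorphic Banach space $\B_0(w)$ together with a carefully chosen SA-set $E_\delta$ of Lebesgue measure exactly $\delta$.

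First, I would recall that $\B_0(w)$ is a holomorphic Banach space in the sense of Section~\ref{SEC:3}, as recorded right after Lemma~\thref{LEM:BlochwDual}, so the abstract machinery is available. Next, the hypothesis $\int_0^1 w^2(t)/t\,dt = \infty$ places us in the regime where Theorem~\thref{THM:SAsets} guarantees the existence of non-trivial SA-sets for $\B_0(w)$. A closer look at the construction given in Section~\ref{SEC:4} actually shows that, for any prescribed $\gamma>0$, one obtains an SA-set for $\B_0(w)$ whose Lebesgue measure is at least $1-\gamma$. Choosing $\gamma$ small enough that $1-\gamma>\delta$ and invoking the non-atomicity of Lebesgue measure, I would then extract a measurable subset $E_\delta$ of Lebesgue measure exactly $\delta$, which remains an SA-set for $\B_0(w)$ by the restriction principle of Lemma~\thref{LEM:SA-RIG}.

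With $E_\delta$ in hand, I would apply Theorem~\thref{THM:Univ1} with $X=\B_0(w)$ and $E=E_\delta$. Since $E_\delta$ satisfies the SA-property, the uniqueness of the partition in Theorem~\thref{THM:AbsThom}, together with Theorem~\thref{THM:KOprop}, forces the associated splitting $E_\delta = A \cup S$ to satisfy $|A|=0$ and $S=E_\delta$ modulo null sets: any subset $A$ of $E_\delta$ of positive measure that were a set of rigidity would, via Theorem~\thref{THM:KOprop}, contradict the SA-property of $E_\delta$. Part~(i) of Theorem~\thref{THM:Univ1} then directly produces a subset $\mathcal{U}_{w,\delta}$ of $\B_0(w)$, generic in the sense of Baire categories, such that every $f \in \mathcal{U}_{w,\delta}$ induces a universal Menshov pair $(f, E_\delta)$ from $\B_0(w)$, which is precisely the claim.

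I do not anticipate a serious obstacle here, since the heavy lifting has already been accomplished in Theorem~\thref{THM:SAsets} and Theorem~\thref{THM:Univ1}. The only mild point requiring attention is fine-tuning the Lebesgue measure of the SA-set to hit any prescribed value $0<\delta<1$; this is dispatched by combining the flexibility of the construction in Section~\ref{SEC:4} with the restriction principle of Lemma~\thref{LEM:SA-RIG}.
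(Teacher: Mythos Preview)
Your proposal is correct and follows exactly the route the paper indicates: the corollary is stated as an immediate application of \thref{THM:Univ1} in conjunction with \thref{THM:SAsets}, and you have simply spelled out the details, including the use of the restriction principle from \thref{LEM:SA-RIG} to calibrate the Lebesgue measure of the SA-set to the prescribed value~$\delta$.
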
 
It was further asked in \cite{beise2016generic}, which conditions on such sets $E$ in give rise to the Mensov universality in $\B_0$. Our \thref{THM:CHAR1} gives a function theoretical description of such sets, while \thref{THM:CHARREM} and \thref{COR:Hwinv} give some necessary metric and/or geometric conditions.

\bibliographystyle{siam}
\bibliography{mybib}

\begin{thebibliography}{10}

\bibitem{ahlfors1950conformal}
{\sc L.~Ahlfors and A.~Beurling}, {\em {Conformal invariants and function-theoretic null-sets}},  (1950).

\bibitem{aleksandrovinv}
{\sc A.~B. Aleksandrov}, {\em Invariant subspaces of shift operators. {A}n axiomatic approach}, Zap. Nauchn. Sem. Leningrad. Otdel. Mat. Inst. Steklov. (LOMI), 113 (1981), pp.~7--26, 264.

\bibitem{aleksandrov1999inner}
{\sc A.~B. Aleksandrov, J.~Anderson, and A.~Nicolau}, {\em {Inner functions, Bloch spaces and symmetric measures}}, Proceedings of the London Mathematical Society, 79 (1999), pp.~318--352.

\bibitem{dbrcont}
{\sc A.~Aleman and B.~Malman}, {\em Density of disk algebra functions in de {B}ranges--{R}ovnyak spaces}, C. R. Math. Acad. Sci. Paris, 355 (2017), pp.~871--875.

\bibitem{jfabackshift}
\leavevmode\vrule height 2pt depth -1.6pt width 23pt, {\em Hilbert spaces of analytic functions with a contractive backward shift}, Journal of Functional Analysis, 277 (2019), pp.~157--199.

\bibitem{aleman2009nontangential}
{\sc A.~Aleman, S.~Richter, and C.~Sundberg}, {\em Nontangential limits in ${\Po}^t(\mu)$-spaces and the index of invariant subspaces}, Annals of mathematics,  (2009), pp.~449--490.

\bibitem{anderson1991inner}
{\sc J.~Anderson, J.~Fernandez, and A.~L. Shields}, {\em {Inner functions and cyclic vectors in the Bloch space}}, Transactions of the American Mathematical Society, 323 (1991), pp.~429--448.

\bibitem{beise2016generic}
{\sc H.-P. Beise and J.~M{\"u}ller}, {\em {Generic boundary behaviour of Taylor series in Hardy and Bergman spaces}}, Mathematische Zeitschrift, 284 (2016), pp.~1185--1197.

\bibitem{beneteau2001jensen}
{\sc C.~B{\'e}n{\'e}teau and B.~Korenblum}, {\em {Jensen type inequalities and radial null sets}}, Analysis, 21 (2001), pp.~99--106.

\bibitem{bloom1989atomic}
{\sc S.~Bloom and G.~S. De~Souza}, {\em {Atomic decomposition of generalized Lipschitz spaces}}, Illinois Journal of Mathematics, 33 (1989), pp.~181--209.

\bibitem{carlesonuniqueness}
{\sc L.~Carleson}, {\em Sets of uniqueness for functions regular in the unit circle}, Acta mathematica, 87 (1952), pp.~325--345.

\bibitem{cauchytransform}
{\sc J.~Cima, A.~Matheson, and W.~Ross}, {\em The {C}auchy transform}, vol.~125 of Mathematical Surveys and Monographs, American Mathematical Society, Providence, RI, 2006.

\bibitem{cima2000backward}
{\sc J.~A. Cima and W.~T. Ross}, {\em {The backward shift on the Hardy space}}, no.~79, American Mathematical Soc., 2000.

\bibitem{conway1991theory}
{\sc J.~B. Conway}, {\em The theory of subnormal operators}, no.~36, American Mathematical Soc., 1991.

\bibitem{ransforddbrdirichlet}
{\sc C.~Costara and T.~Ransford}, {\em Which de {B}ranges-{R}ovnyak spaces are {D}irichlet spaces (and vice versa)?}, J. Funct. Anal., 265 (2013), pp.~3204--3218.

\bibitem{de1989fourier}
{\sc G.~S. De~Souza et~al.}, {\em {Fourier series and the maximal operator on the weighted special atom spaces}}, International Journal of Mathematics and Mathematical Sciences, 12 (1989), pp.~579--582.

\bibitem{de1986several}
{\sc G.~S. de~Souza, R.~O'Neil, and G.~Sampson}, {\em {Several characterizations for the special atom spaces with applications.}}, Revista Matematica Iberoamericana, 2 (1986), pp.~333--355.

\bibitem{duren1970theory}
{\sc P.~L. Duren}, {\em {Theory of H p Spaces}}, Academic press, 1970.

\bibitem{garnett}
{\sc J.~Garnett}, {\em Bounded analytic functions}, vol.~236, Springer Science \& Business Media, 2007.

\bibitem{garnett2005harmonic}
{\sc J.~B. Garnett and D.~E. Marshall}, {\em {Harmonic measure}}, vol.~2, Cambridge University Press, 2005.

\bibitem{grosse1999universal}
{\sc K.-G. Grosse-Erdmann}, {\em Universal families and hypercyclic operators}, Bulletin of the American Mathematical Society, 36 (1999), pp.~345--381.

\bibitem{havinbook}
{\sc V.~P. Havin and B.~J\"oricke}, {\em The uncertainty principle in harmonic analysis}, vol.~72 of Encyclopaedia Math. Sci., Springer, Berlin, 1995.

\bibitem{hedberg1974removable}
{\sc L.~I. Hedberg}, {\em {Removable singularities and condenser capacities}}, Arkiv f{\"o}r Matematik, 12 (1974), pp.~181--201.

\bibitem{hedenmalmbergmanspaces}
{\sc H.~Hedenmalm, B.~Korenblum, and K.~Zhu}, {\em Theory of {B}ergman spaces}, vol.~199 of Graduate Texts in Mathematics, Springer-Verlag, New York, 2000.

\bibitem{holland1988growth}
{\sc F.~Holland and D.~Walsh}, {\em {Growth estimates for functions in the Besov spaces}}, in Proceedings of the Royal Irish Academy. Section A: Mathematical and Physical Sciences, JSTOR, 1988, pp.~1--18.

\bibitem{khrushchev2020continuous}
{\sc S.~Khrushchev}, {\em {A continuous function with universal Fourier series on a given closed set of Lebesgue measure zero}}, Journal of Approximation Theory, 252 (2020), p.~105361.

\bibitem{khrushchev1978problem}
{\sc S.~V. Khrushchev}, {\em The problem of simultaneous approximation and of removal of the singularities of {C}auchy type integrals}, Trudy Matematicheskogo Instituta imeni VA Steklova, 130 (1978), pp.~124--195.

\bibitem{limani2023m_z}
{\sc A.~Limani}, {\em Shift invariant subspaces in growth spaces and sets of finite entropy}, arXiv preprint arXiv:2304.09081,  (2023).

\bibitem{DBRpapperAdem}
{\sc A.~Limani and B.~Malman}, {\em On the problem of smooth approximations in $\hb$ and connections with subnormal operators}, preprint,  (2021).

\bibitem{limani2022abstract}
{\sc A.~Limani and B.~Malman}, {\em An abstract approach to approximation in spaces of pseudocontinuable functions}, Proceedings of the American Mathematical Society, 150 (2022), pp.~2509--2519.

\bibitem{limani2024constructions}
\leavevmode\vrule height 2pt depth -1.6pt width 23pt, {\em {Constructions of some families of smooth Cauchy transforms}}, Canadian Journal of Mathematics, 76 (2024), pp.~319--344.

\bibitem{limani2023mzinvariant}
{\sc A.~Limani and A.~Nicolau}, {\em {Shift invariant subspaces in the Bloch space}},  (2023).

\bibitem{makarov1989class}
{\sc N.~G. Makarov}, {\em {On a class of exceptional sets in the theory of conformal mappings}}, Matematicheskii Sbornik, 180 (1989), pp.~1171--1182.

\bibitem{malman2023thomson}
{\sc B.~Malman}, {\em {Thomson decompositions of measures in the disk}}, Transactions of the American Mathematical Society, 376 (2023), pp.~8529--8552.

\bibitem{parise2019densite}
{\sc P.-O. Paris{\'e}}, {\em {Densit{\'e} de l’algebre du disque dans les espaces de de Branges--Rovnyak}},  (2019).

\bibitem{perfekt2017m}
{\sc K.-M. Perfekt}, {\em On m-ideals and oo type spaces}, Mathematica scandinavica,  (2017), pp.~151--160.

\bibitem{pommerenke2013boundary}
{\sc C.~Pommerenke}, {\em {Boundary behaviour of conformal maps}}, vol.~299, Springer Science \& Business Media, 2013.

\bibitem{rudin1973functional}
{\sc W.~Rudin}, {\em {Functional Analysis . MacGraw-Hill Book Company}}, International Edition,  (1973).

\bibitem{shirokov1982zero}
{\sc N.~A. Shirokov}, {\em {Zero sets for functions from $\Lambda_\omega$}}, Zapiski Nauchnykh Seminarov POMI, 107 (1982), pp.~178--188.

\bibitem{smith1998inner}
{\sc W.~Smith}, {\em {Inner functions in the hyperbolic little Bloch class.}}, Michigan Mathematical Journal, 45 (1998), pp.~103--114.

\bibitem{taylor1970ideals}
{\sc B.~A. Taylor and D.~L. Williams}, {\em Ideals in rings of analytic functions with smooth boundary values}, Canadian Journal of Mathematics, 22 (1970), pp.~1266--1283.

\bibitem{thomson1991approximation}
{\sc J.~E. Thomson}, {\em Approximation in the mean by polynomials}, Annals of Mathematics, 133 (1991), pp.~477--507.

\end{thebibliography}

\end{document}